\newfont{\cyr}{wncyr10 scaled 1100}
\newtheorem{thm}{Theorem}[section]
\newtheorem{pro}[thm]{Proposition}
\newtheorem{cor}[thm]{Corollary}
\newtheorem{lem}[thm]{Lemma}
\theoremstyle{remark}
\newtheorem{rem}[thm]{Remark}
\theoremstyle{remark}
\newtheorem*{rem-no-num}{Remark}
\theoremstyle{remark}
\theoremstyle{remark}
\theoremstyle{remark}
\theoremstyle{definition}
\newtheorem{dfn}[thm]{Definition}
\theoremstyle{definition}
\newcommand{\Q}{\mbox{$\mathbb Q$}}
\newcommand{\T}{\mbox{$\mathbb T$}}
\newcommand{\Z}{\mbox{$\mathbb Z$}}
\newcommand{\N}{\mbox{$\mathbb N$}}
\newcommand{\F}{\mbox{$\mathbb F$}}
\newcommand{\PP}{\mbox{$\mathbb P$}}
\newcommand{\n}{\mbox{$\mathfrak n$}}
\newcommand{\p}{\mbox{$\mathfrak p$}}
\newcommand{\E}{\mbox{$\mathscr E$}}
\newcommand{\D}{\mbox{$\mathscr D$}}
\newcommand{\cO}{\mbox{$\mathcal O$}}
\newcommand{\cC}{\mbox{$\mathcal C$}}
\newcommand{\gal}[2]{\mbox{$\mathrm{Gal}(#1/#2)$}}
\newcommand{\longmono}{\mbox{$\lhook\joinrel\longrightarrow$}}
\newcommand{\longepi}{\mbox{$\relbar\joinrel\twoheadrightarrow$}}
\begin{document}

\title[Torsion points on elliptic curves and a theorem of Igusa]{Torsion points on elliptic curves over function fields and a theorem of Igusa}
\author[A. Bandini, I. Longhi and S. Vigni]{Andrea Bandini, Ignazio Longhi and Stefano Vigni}
\address{A. B.: Dipartimento di Matematica, Universit\`a della Calabria, Via P. Bucci -- Cubo 30B, 87036 Arcavacata di Rende (CS), Italy}
\email{bandini@mat.unical.it}
\address{I. L.: Dipartimento di Matematica, Universit\`a di Milano, Via C. Saldini 50, 20133 Milano, Italy}
\email{longhi@mat.unimi.it}
\address{S. V.: Dipartimento di Matematica, Universit\`a di Milano, Via C. Saldini 50, 20133 Milano, Italy}
\email{stevigni@mat.unimi.it}
\subjclass[2000]{11G05, 11F80}
\keywords{elliptic curves, function fields, Galois representations}

\begin{abstract}
If $F$ is a global function field of characteristic $p>3$, we employ Tate's theory of analytic uniformization to give an alternative proof of a theorem of Igusa describing the image of the natural Galois representation on torsion points of non-isotrivial elliptic curves defined over $F$. Along the way, using basic properties of Faltings heights of elliptic curves, we offer a detailed proof of the function field analogue of a classical theorem of Shafarevich according to which there are only finitely many $F$-isomorphism classes of admissible elliptic curves defined over $F$ with good reduction outside a fixed finite set of places of $F$. We end the paper with an application to torsion points rational over abelian extensions of $F$.
\end{abstract}

\maketitle

\section{Introduction}

In any modern treatment of the theory of elliptic curves over arithmetically interesting fields a central role is played by the structure of the subgroup of torsion points viewed as a Galois module. Indeed, if $E$ is an elliptic curve defined over a global field $F$ (by which we mean, as usual, a finite extension of the field $\Q$ of rational numbers or the function field of a smooth, projective algebraic curve over a finite field) then the absolute Galois group $\gal{F^s}{F}$ of $F$ (with $F^s$ being the separable closure of $F$ in a fixed algebraic closure $\bar{F}$) acts on the $n$-torsion subgroup $E[n]$ of $E$ for all integers $n\geq1$ not divisible by the characteristic of $F$. (We remark that $E[n]\subset E(F^s)$ if $\text{char}(F)\nmid n$; this is due to the fact that $E[n]$, viewed as a group scheme, is \'etale over $F$, cf. \cite[Ch. 7, Theorem 4.38]{li}. An alternative proof of this rationality result is given in Proposition \ref{sep-torsion-pro} below.) This basic property, an immediate consequence of the fact that the group law on $E$ is given by rational functions with coefficients in $F$ (which says that, in a fancier language, $E$ is an algebraic group over $F$), naturally leads to the study of one of the most important objects that can be attached to an elliptic curve: its $\ell$-adic representation.

Explicitly, let $\ell$ be a rational prime number such that $\ell\not=\text{char}(F)$. The natural action of $\gal{F^s}{F}$ on the subgroups $E[\ell^n]$ gives rise to (continuous) Galois representations
\begin{equation} \label{red-n-rep-eq}
\bar{\rho}_{E,\ell^n}: \gal{F^s}{F} \longrightarrow \text{Aut}(E[\ell^n])\cong GL_2(\Z/\ell^n\Z),
\end{equation}
the non-canonical isomorphism on the right depending on the choice of a basis of the free module $E[\ell^n]$ over $\Z/\ell^n\Z$. Let now
\[ T_\ell(E):=\varprojlim_n E[\ell^n]\cong\Z_\ell\times\Z_\ell \]
be the \emph{$\ell$-adic Tate module} of $E$, where the projective limit is taken with respect to the multiplication-by-$\ell$ maps. By considering the action of $\gal{F^s}{F}$ on $T_\ell(E)$ we obtain a continuous Galois representation
\begin{equation} \label{l-adic-rep-eq}
\rho_{E,\ell}: \gal{F^s}{F} \longrightarrow \text{Aut}(T_\ell(E))\cong GL_2(\Z_\ell)
\end{equation}
which is called the \emph{$\ell$-adic representation} of $E_{/F}$. Moreover:
\[ \rho_{E,\ell}\bmod{\ell^n}=\bar{\rho}_{E,\ell^n} \]
for all $n\geq1$. Observe that composing $\rho_{E,\ell}$ with the natural inclusion $\Z_\ell\subset\Q_\ell$ gives a representation of $\gal{F^s}{F}$ over a field of characteristic zero. In the following we will regard the representations defined in \eqref{red-n-rep-eq} and \eqref{l-adic-rep-eq} as matrix-valued; in other words, for all $\ell$ we fix a $\Z_\ell$-basis of $T_\ell(E)$. Put $p:=\text{char}(F)>0$; since
\begin{equation} \label{projlim-tors-eq}
\varprojlim_{(n,p)=1}E[n]=\prod_{\ell\not=p}T_\ell(E),
\end{equation}
we have a single large, continuous Galois representation
\begin{equation} \label{rep-eq}
\rho_E: \gal{F^s}{F} \longrightarrow \prod_{\ell\not=p}GL_2(\Z_\ell)
\end{equation}
whose $\ell$th component is the $\ell$-adic representation $\rho_{E,\ell}$. Now denote by $E_\text{$(p)$-tors}$ the subgroup of torsion points of $E$ whose order is prime to $p$. Our choice of bases for the groups $E[n]$ gives an identification
\begin{equation} \label{aut-tors-eq}
\text{Aut}(E_\text{$(p)$-tors})=\varprojlim_{(n,p)=1}GL_2(\Z/n\Z)=\prod_{\ell\not=p}GL_2(\Z_\ell).
\end{equation}
If $F$ has characteristic zero then $E_\text{$(p)$-tors}$ is the whole torsion subgroup $E_\text{tors}$ of $E$, and the inverse limits and the products in \eqref{projlim-tors-eq}, \eqref{rep-eq} and \eqref{aut-tors-eq} are taken over all positive integers and over all prime numbers, respectively.

Given an elliptic curve $E_{/F}$ as above, at least two natural, closely related questions arise:
\begin{equation*}
\text{\emph{Can we describe the image of $\rho_{E,\ell}$ (resp. $\rho_E$) in $GL_2(\Z_\ell)$ (resp. in the product)?}} \tag{$\ast$}
\end{equation*}
And, somewhat less ambitiously:
\begin{equation*}
\text{\emph{How ``large'' is the image of $\rho_{E,\ell}$ (resp. $\rho_E$)?}} \tag{$\ast\ast$}
\end{equation*}
Seeking an answer to questions $(\ast)$ and $(\ast\ast)$ has been the fuel for much investigation in arithmetic algebraic geometry over the past few decades. In particular, we have come to realize that the zero and positive characteristic settings lead to different phenomena. When $F$ is a number field (i.e., a finite extension of $\Q$), a celebrated theorem of Serre gives a very satisfying answer for a large class of elliptic curves $E_{/F}$. Namely,
\begin{thm}[Serre] \label{serre-thm}
Let $F$ be a number field and let $E_{/F}$ be an elliptic curve without complex multiplication\footnote{Recall that, by definition, this means that $\text{End}(E)=\Z$. We remark that throughout our paper we write $\text{End}(E)$ for $\text{End}_{\bar F}(E)$.}. The closed subgroup $\rho_E(\gal{\bar F}{F})$ is open (i.e., has finite index) in $\mathrm{Aut}(E_{\mathrm{tors}})$. Equivalently:
\begin{itemize}
\item[\emph{i)}] $\rho_{E,\ell}(\gal{\bar F}{F})$ is open (i.e., has finite index) in $GL_2(\Z_\ell)$ for all primes $\ell$;
\vskip 2mm
\item[\emph{ii)}] $\rho_{E,\ell}(\gal{\bar F}{F})=GL_2(\Z_\ell)$ for all but finitely many primes $\ell$.
\end{itemize}
\end{thm}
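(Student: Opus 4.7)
The plan is to reduce the main statement to the equivalent conjunction of (i) and (ii), and then prove these using $\ell$-adic Lie theory and the classification of subgroups of $GL_2(\F_\ell)$. The equivalence itself is a piece of profinite group theory: one direction is immediate, while the other uses the simplicity of $PSL_2(\F_\ell)$ for $\ell\geq 5$ together with the observation that, for distinct such primes $\ell\neq\ell'$, the groups $PSL_2(\F_\ell)$ and $PSL_2(\F_{\ell'})$ share no non-trivial common quotient; this lets one assemble the individual surjectivities of (ii) and the openness results of (i) into openness of the product image.

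For (i), fix a prime $\ell$ and let $G_\ell:=\rho_{E,\ell}(\gal{\bar F}{F})\subseteq GL_2(\Z_\ell)$. Being closed, $G_\ell$ is an $\ell$-adic Lie subgroup with Lie algebra $\mathfrak{g}_\ell\subseteq\mathfrak{gl}_2(\Q_\ell)$, and it suffices to show $\mathfrak{g}_\ell=\mathfrak{gl}_2(\Q_\ell)$, whereupon $G_\ell$ is open by the Lie correspondence. Chebotarev density applied at primes of good ordinary reduction supplies Frobenius elements in $G_\ell$ with distinct eigenvalues, so $\mathfrak{g}_\ell$ contains a split Cartan subalgebra. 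If $\mathfrak{g}_\ell$ were a proper subalgebra, then the commutant of $G_\ell$ in $\mathrm{End}(T_\ell(E))\otimes\Q_\ell$ would strictly contain $\Q_\ell$; by Faltings' isogeny theorem this commutant equals $\mathrm{End}_F(E)\otimes\Q_\ell$, giving endomorphisms beyond $\Z$ and contradicting the no-CM hypothesis. (In Serre's original argument the same conclusion is reached without Faltings, via a direct Zariski-density argument of Bogomolov type.)

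For (ii) one must show $\bar\rho_{E,\ell}$ is surjective for all but finitely many $\ell$. Its determinant equals the mod-$\ell$ cyclotomic character (via the Weil pairing), hence surjects onto $\F_\ell^\ast$ whenever $\Q(\zeta_\ell)\not\subseteq F$, which excludes only finitely many $\ell$. It thus suffices to show the image contains $SL_2(\F_\ell)$, and by Dickson's classification of subgroups of $PGL_2(\F_\ell)$ this reduces to excluding three scenarios: the image is contained in (a) a Borel subgroup, (b) the normalizer of a split or non-split Cartan, or (c) an exceptional subgroup with projective image $A_4$, $S_4$ or $A_5$. Case (c) is eliminated for large $\ell$ by trace computations at unramified Frobenius elements, using Hasse's bound together with Chebotarev to produce traces inconsistent with the small exceptional groups; case (b) yields a quadratic character cutting out a quadratic extension $F'/F$ over which $E$ would acquire extra endomorphisms, contradicting the no-CM hypothesis (stable under finite base change and isogeny).

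The main obstacle is case (a), corresponding to a $\gal{\bar F}{F}$-stable line in $E[\ell]$, i.e., an $F$-rational cyclic isogeny of degree $\ell$ from $E$. Bounding the primes $\ell$ for which such an isogeny exists is the most delicate ingredient; Serre's treatment combines a careful analysis of the action on Tate modules with height estimates relating $E$ to an $\ell$-isogenous curve $E'$ and the finiteness of $F$-isomorphism classes of elliptic curves of bounded conductor and height (an input of Shafarevich type in the number field setting). Once mod-$\ell$ surjectivity holds for almost all $\ell$, a standard Lie-theoretic lifting lemma (any closed subgroup of $GL_2(\Z_\ell)$ surjecting onto $GL_2(\F_\ell)$ is all of $GL_2(\Z_\ell)$, valid for $\ell\geq 5$) upgrades this to surjectivity of $\rho_{E,\ell}$, which together with (i) establishes the equivalent statement and hence the theorem.
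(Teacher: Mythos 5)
The paper does not prove this statement: Theorem~\ref{serre-thm} is quoted as background, with the proof deferred entirely to the references \cite{se1} and \cite{se2}. So there is no ``paper's proof'' to compare against; what can be assessed is whether your sketch would, if filled in, actually yield Serre's theorem.

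Most of your outline is in the right spirit (the reduction to (i) and (ii), the role of irreducibility via a Shafarevich-type finiteness statement for case (a), the use of Dickson's classification, and the lifting lemma from mod-$\ell$ surjectivity to $\ell$-adic surjectivity). But there is a genuine gap in case (b). You claim that if $\bar\rho_{E,\ell}(G_F)$ lies in the normalizer of a Cartan but not in the Cartan itself, the associated quadratic character cuts out an extension $F'/F$ over which ``$E$ would acquire extra endomorphisms.'' That inference is false: having mod-$\ell$ image inside an abelian subgroup (a Cartan) over $F'$, for a single prime $\ell$, gives no endomorphism of $E$; the implication ``CM $\Rightarrow$ abelian image at every $\ell$'' does not reverse level by level. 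In Serre's proof this is the genuinely delicate part, handled by completely different means: for the split Cartan case he exploits the image of inertia at $\ell$ (via the structure of the local representation at a place above $\ell$ in the good ordinary or multiplicative case) to show the image cannot be diagonalizable mod $\ell$ for large $\ell$; for the non-split Cartan normalizer he uses complex conjugation when $F$ has a real place, and more elaborate arguments (bounds coming from the abelian quotient of the image and class field theory) otherwise. This is precisely where a ``Tate-curve'' or purely analytic argument, as pursued in the present paper for function fields, is insufficient over number fields, which the paper itself points out. You should also note that your commutant argument for (i) needs irreducibility of $V_\ell(E)$ as an input (to rule out Borel-type subalgebras whose commutant is also trivial); that irreducibility is itself a nontrivial theorem of Serre, proved via a Shafarevich argument much like the paper's Theorem~\ref{irreducibility-thm}.
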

A complete proof of this result, often cited in the literature as ``the open image theorem'', can be found in \cite{se1} (see also \cite{se2}, where part \emph{i)} was first proved).
\begin{rem} \label{serre-rem}
Theorem \ref{serre-thm} is false for CM elliptic curves. In fact, if $E_{/F}$ has complex multiplication \emph{over} $F$ then it can be shown that the action of $\gal{\bar F}{F}$ on $T_\ell(E)$ is abelian, hence $\rho_{E,\ell}$ falls short from being surjective. For details, see the proof of \cite[Ch. II, Theorem 2.3]{si3}.
\end{rem}
In the positive characteristic case (i.e., when $F$ is a global function field) things have a similar but slightly more involved description. To explain what happens, we need to introduce some notation. Let $\cC_{/\mathbb F_r}$ be a geometrically irreducible, smooth, projective algebraic curve over a finite field of characteristic $p>0$, and denote $F:=\F_r(\cC)$ and $\cO_{\mathcal C}$ the function field and the structure sheaf of $\cC$, respectively. (In particular, the irreducibility condition implies that $\F_r$ is algebraically closed in $F$, i.e., $\F_r$ is the field of constants of $F$; see, e.g., \cite[Ch. 3, Corollary 2.14 (d)]{li}.) Fix a closed point $\infty$ of $\cC$ and denote $A:=\cO_{\mathcal C}(\cC-\{\infty\})$ the Dedekind domain of the elements of $F$ that are regular outside $\infty$. The choices of the prime $\infty$ and of the ring $A$, which is essentially the analogue of the ring of integers in an algebraic number field and whose role in our arguments will become apparent only later, are immaterial for the statement of Igusa's results. However, for the sake of clarity, we deem it convenient to introduce our setup once and for all at the beginning of the paper.

The basic example to keep in mind is the following:
\begin{itemize}
\item $\cC=\PP^1_{/\mathbb F_r}$,
\vskip 2mm
\item $\infty=[1:0]$ (the usual point at infinity),
\vskip 2mm
\item $A=\F_r[T]$,
\vskip 2mm
\item $F=\F_r(T)$.
\end{itemize}
If they find it preferable, in all that follows the readers can interpret our notation according to the dictionary above without impairing their understanding in any significant way. In the sequel we adopt the notation of \cite[\S 7.2]{bro}.

Let $n$ be an integer prime to $p=\text{char}(F)$; composing the Galois representation
\[ \bar{\rho}_{E,n}:G_F:=\gal{F^s}{F}\longrightarrow\text{Aut}(E[n])\cong GL_2(\Z/n\Z) \]
with the determinant
\[ \text{det}:\text{Aut}(E[n])\longrightarrow(\Z/n\Z)^\times \]
induces a homomorphism $G_F\rightarrow(\Z/n\Z)^\times$. Set $H_n:=\langle r\rangle\subset(\Z/n\Z)^\times$ for the cyclic
subgroup generated by $r$. The natural identification of $r\in(\Z/n\Z)^\times$ with the $r$th-power Frobenius shows that
$H_n\cong\gal{\F_r(\boldsymbol\mu_n)}{\F_r}$, where $\boldsymbol\mu_n$ denotes the $n$th roots of unity in $F^s$. As in \cite[\S
7.2]{bro}, define the subgroup $\Gamma_n$ of $GL_2(\Z/n\Z)$ via the short exact sequence
\begin{equation} \label{gamma-n-eq}
0\longrightarrow SL_2(\Z/n\Z)\longrightarrow\Gamma_n \xrightarrow{\text{det}}H_n\longrightarrow0.
\end{equation}
In other words, $\Gamma_n$ is the inverse image of $H_n$ in $GL_2(\Z/n\Z)$ via the determinant. Passing to the inverse limit over all integers $n$ not divisible by $p$ we get an exact sequence of profinite groups
\begin{equation} \label{gamma-hat-eq}
0\longrightarrow SL_2(\hat{\Z}_{(p)})\longrightarrow\hat\Gamma\longrightarrow\hat{H}\longrightarrow0.
\end{equation}
Here $\hat{\Z}_{(p)}:=\prod_{\ell\not=p}\Z_\ell$ is the prime-to-$p$ profinite completion of $\Z$, the group $\hat{\Gamma}$ is closed in $GL_2(\hat{\Z}_{(p)})$ and $\hat{H}$ is the subgroup of $\hat{\Z}^\times_{(p)}$ which is topologically generated by $r$. If we perform the same procedure restricting instead to the powers of a prime $\ell\not=p$, the sequence \eqref{gamma-n-eq} yields a sequence
\[ 0\longrightarrow SL_2(\Z_\ell)\longrightarrow\hat\Gamma_\ell\longrightarrow\hat{H}_\ell\longrightarrow0. \]
Equivalently:
\[ \hat\Gamma=\prod_{\ell\not=p}\hat\Gamma_\ell\subset\prod_{\ell\not=p}GL_2(\Z_\ell). \]
Finally, assume that $E_{/F}$ is not isotrivial, i.e. that we cannot find a finite extension $F'$ of $F$ such that $E_{/F'}$ is isomorphic to a constant curve (i.e., a curve defined over the field of constants of $F'$). This is easily seen to be equivalent to the condition $j(E)\notin\bar{\F}_r$.
\begin{thm}[Igusa] \label{igusa-thm}
The profinite group $\rho_E(G_F)$ is an open subgroup of $\hat{\Gamma}$.
\end{thm}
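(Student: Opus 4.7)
The plan is to split the statement into two claims: (i) $\det\rho_E(G_F)=\hat H$, and (ii) $\rho_E(G_F)\cap SL_2(\hat\Z_{(p)})$ is open in $SL_2(\hat\Z_{(p)})$. Together with \eqref{gamma-hat-eq}, these imply at once that $\rho_E(G_F)$ is open in $\hat\Gamma$. Claim (i) is the easy half: the Weil pairing $E[n]\times E[n]\to\boldsymbol\mu_n$ identifies $\det\bar\rho_{E,n}$ with the mod-$n$ cyclotomic character $\chi_n\colon G_F\to(\Z/n\Z)^\times$, and because $\F_r$ is algebraically closed in $F$ one has $F(\boldsymbol\mu_n)=F\cdot\F_r(\boldsymbol\mu_n)$ with $\gal{F(\boldsymbol\mu_n)}{F}\cong\gal{\F_r(\boldsymbol\mu_n)}{\F_r}=H_n$; the image of $\chi_n$ is therefore exactly $H_n$, and passage to the limit gives (i).

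The substantive content is claim (ii), and this is where non-isotriviality and Tate uniformization enter. Non-isotriviality means $j(E)\in F\setminus\bar{\F}_r$, so $j(E)$, viewed as a rational function on $\cC$, has at least one pole $v$. At any such place $v(j(E))<0$, hence $E$ has potentially multiplicative reduction there; replacing $F$ by at most a quadratic extension (which does not affect openness of the image) we may assume split multiplicative reduction at $v$. Tate's theory of rigid-analytic uniformization then furnishes a Tate parameter $q_E\in F_v^\times$ with $v(q_E)=-v(j(E))>0$ and a $G_{F_v}$-equivariant isomorphism $E(\overline{F_v})\cong \overline{F_v}^{\,\times}/q_E^{\,\Z}$. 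Using the basis of $T_\ell(E)$ given by a compatible system $(\zeta_{\ell^n})_n$ of primitive roots of unity and a compatible system of $\ell^n$-th roots of $q_E$, the restriction $\rho_{E,\ell}|_{G_{F_v}}$ becomes upper triangular with diagonal $(\chi_\ell,1)$. Restricting further to the inertia at $v$ and choosing $\ell$ coprime to $v(q_E)$, the unipotent entry hits all of $\Z/\ell\Z$, so $\bar\rho_{E,\ell}(G_F)$ contains a non-trivial transvection for all but finitely many $\ell$.

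To pass from \emph{a} transvection to $SL_2(\F_\ell)$ itself one invokes Dickson's classification of subgroups of $GL_2(\F_\ell)$: a subgroup containing a non-trivial transvection is either contained in a Borel or contains $SL_2(\F_\ell)$. The only case to rule out is thus reducibility of $\bar\rho_{E,\ell}$. If reducibility held for infinitely many $\ell$, the resulting one-dimensional quotients would give rise to an infinite family of pairwise non-isomorphic $F$-isogenous elliptic curves, all with good reduction outside the union of the bad places of $E$ and $\{v\}$; this contradicts the function-field Shafarevich theorem that the paper establishes via Faltings heights. Consequently $\bar\rho_{E,\ell}(G_F)\supset SL_2(\F_\ell)$ for almost every $\ell$, and a Nakayama-type lifting exploiting the pro-$\ell$ nature of $\ker\bigl(SL_2(\Z_\ell)\to SL_2(\F_\ell)\bigr)$ promotes this to $\rho_{E,\ell}(G_F)\supset SL_2(\Z_\ell)$ for almost every $\ell$.

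For each of the finitely many remaining primes, openness of $\rho_{E,\ell}(G_F)$ in $\Gamma_\ell$ follows from a Lie-theoretic argument: the transvection produced by the Tate parameter, combined with (i), forces the closed image to be a full-rank pro-$\ell$ subgroup in $SL_2(\Z_\ell)$ and hence open. To glue the $\ell$-adic statements into the profinite one one uses simplicity of $PSL_2(\F_\ell)$ for $\ell\geq 5$ in a Goursat-style argument: no non-trivial quotient of $SL_2(\F_\ell)$ is a subquotient of $SL_2(\F_{\ell'})$ for distinct primes $\ell,\ell'\geq 5$, so a closed subgroup of $\prod_{\ell\neq p}SL_2(\Z_\ell)$ that is open in each factor is open in the product. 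The main obstacle, in my view, is the Borel-elimination step: it is there that one really needs the function-field Shafarevich theorem in full strength, and assembling the local Tate computation, Dickson's classification, Shafarevich's finiteness, the lifting, and the Goursat patching into a single clean statement is the delicate part of the argument.
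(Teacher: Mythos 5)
Your proposal follows essentially the same route as the paper: descend to a place of split multiplicative reduction, produce a unipotent transvection from the Tate parameter, obtain irreducibility from the function-field Shafarevich theorem (itself via Faltings heights), use Dickson/Serre to promote the transvection to all of $SL_2(\F_\ell)$ for almost all $\ell$, lift to $SL_2(\Z_\ell)$, and patch the components via simplicity of $PSL_2(\F_\ell)$. This is precisely how the paper proceeds in \S\ref{proof-sec}, with the ``occurs'' formalism of Lemma~\ref{occurrence-lem} playing the role of your Goursat step.

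One formulation in the gluing paragraph is, however, literally false and is the one place where the argument would need to be tightened. You write that ``a closed subgroup of $\prod_{\ell\neq p}SL_2(\Z_\ell)$ that is open in each factor is open in the product,'' but this fails: $\prod_\ell U_\ell$, where $U_\ell:=\ker\bigl(SL_2(\Z_\ell)\to SL_2(\F_\ell)\bigr)$, is closed, its intersection with (and projection onto) each factor is the open subgroup $U_\ell$, yet it has infinite index in $\prod_\ell SL_2(\Z_\ell)$. What is actually needed and true is the stronger conclusion that $\rho_E(G_F)$ \emph{contains} $SL_2(\Z_\ell)$, sitting in the $\ell$-th slot of the product, for almost all $\ell$; knowing only that the $\ell$-th projection is surjective (or open) for each $\ell$ does not give this. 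The paper obtains the containment via Proposition~\ref{sl2-pro}: one shows that $PSL_2(\F_\ell)$ occurs in $\rho_E(G_F)\cap S_\ell$ (not just in the projection) using the disjointness of Jordan--H\"older factors across components, then rules out the pro-solvable kernel, and finally applies the lifting Lemma~\ref{sl2-lem}. Your Goursat-plus-simplicity heuristic is aimed in exactly this direction and the tools you name suffice to repair the gap, but the step as stated would not survive scrutiny. The rest of the proposal is sound and matches the paper's architecture, including the need for Shafarevich's theorem in full strength at the Borel-elimination step.
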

If $E[\ell^\infty]$ is the $\ell$-primary part of the torsion of $E$ (for $\ell$ a prime number), Theorem \ref{igusa-thm} can be equivalently formulated as
\begin{thm} \label{igusa2-thm}
The profinite group $\gal{F(E[\ell^\infty])}{F}$ is an open subgroup of $\hat{\Gamma}_\ell$ for all prime numbers $\ell\not=p$, and is equal to $\hat{\Gamma}_\ell$ for almost all such $\ell$.
\end{thm}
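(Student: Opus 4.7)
The plan is to reduce both assertions to statements about the image of $\rho_{E,\ell}(G_F)\cap SL_2(\Z_\ell)$ inside $SL_2(\Z_\ell)$, and then to produce enough explicit elements of the image --- via Tate analytic uniformization at a place of multiplicative reduction --- to deduce openness, and (for almost all $\ell$) equality. The containment $\rho_{E,\ell}(G_F)\subseteq\hat\Gamma_\ell$ itself comes from the Weil pairing, which identifies $\det\rho_{E,\ell}$ with the $\ell$-adic cyclotomic character $\chi_\ell$: since $\F_r$ is the field of constants of $F$, every $\ell$-power root of unity lies in the constant field extension of $F$, so $\chi_\ell$ factors through the canonical surjection $G_F\twoheadrightarrow\gal{\F_r^s}{\F_r}\cong\hat\Z$, which is topologically generated by the $r$-th power Frobenius. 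Its image is exactly $\hat H_\ell$, proving the containment. Granted this, openness of $\rho_{E,\ell}(G_F)$ in $\hat\Gamma_\ell$ and equality with $\hat\Gamma_\ell$ are equivalent, respectively, to openness and surjectivity of $\rho_{E,\ell}(G_F)\cap SL_2(\Z_\ell)$ inside $SL_2(\Z_\ell)$.

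For openness I would use Tate analytic uniformization. Non-isotriviality forces $j(E)\in F$ to be transcendental over the field of constants, hence $j(E)$ has a pole at some closed place $v$ of $\cC$; after passing to a suitable finite extension of $F$ (which does not affect openness questions) one may assume that $E$ has split multiplicative reduction at a place above $v$, so that Tate's theory produces a Galois-equivariant identification $E(\bar F_v)\cong\bar F_v^\times/q^\Z$ for a Tate parameter $q$ of positive valuation. In a suitable $\Z_\ell$-basis of $T_\ell(E)$ the local representation then becomes upper-triangular with diagonal $(\chi_\ell,1)$, and since $\boldsymbol\mu_{\ell^\infty}$ lies in the constant field extension $\chi_\ell$ is unramified at $v$; hence inertia $I_v$ maps into the unipotent radical, with the upper-right entry --- the cocycle describing the action of $I_v$ on $\ell^n$-th roots of $q$ --- surjective onto $\Z_\ell$. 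This supplies a full $\Z_\ell$-line of unipotent elements in $\rho_{E,\ell}(G_F)$. Combined with Zariski density of the global image in $GL_2$ over $\Q_\ell$ (deduced from Chebotarev applied to Frobenius traces at places of good reduction, together with non-isotriviality, which rules out the abelian/CM phenomenon of Remark \ref{serre-rem}), the classical fact that a closed, Zariski-dense subgroup of $SL_2(\Z_\ell)$ containing a non-trivial unipotent is open finishes the first assertion for every $\ell\neq p$.

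For the ``almost all $\ell$'' half I would argue modulo $\ell$ and then lift. The image $\bar\rho_{E,\ell}(G_F)\subseteq\Gamma_\ell$ contains a non-trivial unipotent coming from the Tate parameter (for almost all $\ell$) and surjects onto $H_\ell$ via the determinant, so Dickson's classification of maximal subgroups of $SL_2(\F_\ell)$ leaves only three kinds of obstructions to surjectivity onto $\Gamma_\ell$: the exceptional groups $A_4,S_4,A_5$, normalizers of (split or non-split) tori, and Borel subgroups. The first two families contain no element of order $\ell$ for $\ell$ large, while the Tate unipotent has exact order $\ell$ for all but finitely many $\ell$, excluding them uniformly. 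The Borel case is the genuine obstacle: if $\bar\rho_{E,\ell}(G_F)$ were contained in a Borel subgroup for infinitely many primes $\ell$, then $E$ would admit an $F$-rational cyclic $\ell$-isogeny for infinitely many $\ell$, producing infinitely many pairwise non-isomorphic elliptic curves over $F$ with good reduction outside the bad set of $E$, contradicting the function-field Shafarevich finiteness theorem proved earlier in the paper. A Nakayama/Frattini-style lifting argument then promotes mod-$\ell$ surjectivity to $\Z_\ell$-surjectivity. The hardest step is exactly this uniform-in-$\ell$ Borel exclusion --- openness for each fixed $\ell$ is a clean local computation with the Tate parameter, but controlling all but finitely many primes simultaneously is precisely where the Shafarevich input becomes indispensable.
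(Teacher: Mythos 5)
Your containment argument (Weil pairing identifying $\det\circ\rho_{E,\ell}$ with the cyclotomic character, whose image is $\hat H_\ell$ because $\F_r$ is the constant field) matches the paper's Proposition~\ref{inclusion-gamma-pro}. Your ``almost all $\ell$'' argument --- Tate unipotent of order $\ell$, Dickson's classification to eliminate exceptional groups and normalizers of Cartan subgroups, Shafarevich to kill the Borel case for infinitely many $\ell$, then a Frattini-type lift --- is a valid alternative to the paper's route: the paper instead uses irreducibility of $E[\ell]$ (Theorem~\ref{irreducibility-thm}~i), again a consequence of Shafarevich) and the elementary $SL_2$-generation Lemma~\ref{H-open-lem}, rather than Dickson. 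Both lead to Proposition~\ref{hor-control-pro} and then lift to $\hat\Gamma_\ell$; the difference is cosmetic.

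However, your openness argument for a \emph{fixed} $\ell$ has a genuine gap. You assert that Zariski density of $\rho_{E,\ell}(G_F)$ in $GL_2/\Q_\ell$ follows from ``Chebotarev applied to Frobenius traces at places of good reduction, together with non-isotriviality, which rules out the abelian/CM phenomenon.'' Neither ingredient actually excludes the Borel case for a single $\ell$, which is the only case your Tate-unipotent does not already kill. Chebotarev gives density of Frobenius conjugacy classes, but an upper-triangular subgroup of $GL_2(\Z_\ell)$ can produce arbitrary Frobenius traces, so traces alone carry no information about reducibility. And non-isotriviality only gives $\mathrm{End}_{\bar F}(E)=\Z$ via Lemma~\ref{end-lem}; the converse inference ``Galois image small $\Rightarrow$ extra endomorphisms'' (which would make this into a contradiction) is exactly the content of the Tate/Zarhin/Faltings isogeny theorem over function fields, a deep result you are not invoking and the paper does not use. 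What you actually need here is the irreducibility of $V_\ell(E)$ as a $G_F$-module (Theorem~\ref{irreducibility-thm}~ii), and the paper proves it by a \emph{second} application of Shafarevich, parallel to but distinct from the one you make: a $G_F$-stable line $W\subset T_\ell(E)$ gives, for each $n\ge 1$, a Galois-stable cyclic subgroup $W_n\subset E[\ell^n]$ and an $F$-curve $E_n:=E/W_n$; using $\mathrm{End}(E_n)=\Z$ one checks the $E_n$ are pairwise non-$F$-isomorphic, yet they are all admissible with good reduction outside the bad set of $E$, contradicting Theorem~\ref{shafarevich-thm}. In other words Shafarevich is needed not only ``horizontally'' (to exclude Borels for infinitely many $\ell$, as you do) but also ``vertically'' along the $\ell$-power tower for each fixed $\ell$; without this second use your openness claim for each $\ell\ne p$ is unsupported. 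A small secondary point: the Kummer cocycle on $I_\infty$ has image $\ell^{v_\ell(e)}\Z_\ell$ (with $e=v_\infty(q)$), not all of $\Z_\ell$ unless $\ell\nmid e$, but this does not matter since a nonzero ideal of unipotents is enough for Lemma~\ref{H-open-lem}.
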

Theorem \ref{igusa-thm}, which is the counterpart in the function field setting of Theorem \ref{serre-thm}, was first proved by Igusa in \cite{i}, where Galois-theoretic techniques are combined with an explicit (but somewhat involved) case-by-case analysis of degenerations of elliptic curves and ramification of fields of modular functions. The language adopted by Igusa is that of pre-Grothendieck algebraic geometry, and this old-fashioned style could probably make his paper hard to appreciate for the ``modern'' reader, typically acquainted (at least at an introductory level) with scheme theory but perhaps less familiar with the geometric formalism of Weil's school.\\

The main goal of our article is to provide an alternative proof of Theorem \ref{igusa-thm}. Our strategy is based on the following simple fact: at the cost of passing to a finite separable extension of $F$, we can choose $\infty$ to be a prime of split multiplicative reduction for $E$ \footnote{I.e., the reduced curve $E\bmod\infty$ has a node as its only singularity, and the slopes of the two tangent lines at the curve in the node belong to the (finite) residue field of $F$ at $\infty$ (and not just to its quadratic extension).}. This implies that $E$ is a \emph{Tate curve} locally at $\infty$, that is, if $F_\infty$ is the completion of $F$ at $\infty$ then the $\infty$-adic Lie group $E(\bar{F}_\infty)$ is Galois-equivariantly isomorphic over $F_\infty$ to the quotient $\bar{F}^\times_\infty/\langle q\rangle$ for a certain ``period'' $q\in F^\times_\infty$. Following ideas of Serre in characteristic zero, this analytic property allows us to replace the algebro-geometric arguments of Igusa with local $\infty$-adic considerations. Although our strategy follows that of Serre closely, it is important to stress a remarkable difference between the characteristic zero and the positive characteristic settings: while our proof is valid for all non-isotrivial elliptic curves over a function field $F$ as above, Tate's theory over number fields yields a proof only of a part of Serre's theorem. In fact, not every elliptic curve without complex multiplication has a non-integral $j$-invariant, so only a proper subclass of non-CM elliptic curves can be dealt with by means of these ``analytic'' arguments (actually, a full proof of Theorem \ref{serre-thm} is achieved in \cite{se1} using different and, in many respects, more sophisticated techniques).

In any case, the fact that this approach to questions $(\ast)$ and $(\ast\ast)$ is fruitful both in characteristic zero and in positive characteristic should come as no surprise. In fact, broadly speaking, this is just another manifestation of the strong parallel between the arithmetic over number fields and the arithmetic over function fields: from this perspective, our work is no exception to this familiar principle. In this direction, the reader is referred to the survey articles \cite{bo} by B\"ockle and \cite{u} by Ulmer for further number fields/function fields analogies on more advanced and abstract topics.\\

Our paper is organized as follows. {\bf Section \ref{faltings-sec}} begins with a review of the basic properties of Faltings heights of elliptic curves over global function fields. These heights, originally defined by Faltings on suitable moduli spaces of abelian varieties of arbitrary dimension, play a key role in the proof of the function field analogue (Theorem \ref{shafarevich-thm}) of the well-known theorem of Shafarevich asserting that there are only finitely many $K$-isomorphism
classes of elliptic curves defined over a number field $K$ with good reduction outside a fixed finite set of places of $K$.
Remarkably enough, the need to use the theory of heights of elliptic curves seems to be peculiar of our characteristic $p$
setting, since in this case the classical diophantine arguments given over number fields (see \cite[Ch. IV, \S 1.4]{se2}) do not apply (cf. Remark \ref{shafarevich-rem}). Although the validity of Shafarevich's theorem for admissible elliptic curves over global function fields is certainly well known (cf., e.g., \cite{f3}, \cite{sz}), to the best of our knowledge this is the first time that the proof is written in a detailed and essentially self-contained way; in this sense, \S \ref{faltings-heights-subsec} and \S \ref{shafarevich-subsec} may be of independent interest and apparently fill a gap in the literature. A crucial role in the proof of Shafarevich's theorem is played by a remarkable property of admissible elliptic curves over global function fields: their Faltings height is bounded in terms of the degree of their conductor and the genus of $F$. A complete proof of this result, which is expected to be valid for elliptic curves over all global fields and is commonly known as the ``height conjecture'', is given in \S \ref{admissible-subsec} (Proposition \ref{height-conj-pro}). The section closes with irreducibility results for Tate modules (\S \ref{irreducibility-subsec}) that are applied in the proof of Theorem \ref{igusa-thm}.

In {\bf Section \ref{proof-sec}} we conclude the proof of Igusa's theorem. After showing (in \S \ref{weil-subsec} and \S \ref{split-multiplicative-subsec}) that we can actually reduce to the case where $E$ has (split) multiplicative reduction at the prime $\infty$ of $F$, we review basic facts on Tate curves in \S \ref{tate-subsec}, and then give in \S \ref{horizontal-subsec} and \S \ref{vertical-subsec} crucial results on the ``horizontal'' and ``vertical'' variation of the Galois groups (this suggestive terminology is borrowed from Lang's book \cite{la}). In particular, in Proposition \ref{hor-control-pro} we prove that $\gal{F(E[\ell])}{F}=\Gamma_\ell$ for almost all primes $\ell\not=p$, and this result can be usefully applied to gain control on the Galois cohomology of elliptic curves (cf., e.g., \cite{bro}, \cite{vi}). Finally, with all the ``geometric'' results at our disposal, in \S \ref{conclusion-subsec} we finish the proof of Theorem \ref{igusa-thm}. It should be noted that, as in \cite[Ch. 17, \S 5]{la} and \cite[Ch. IV, \S 3.4]{se2}, the final steps in the proof are of a purely algebraic nature: they are just a formal ``juggling'' in abstract group theory and have really nothing to do with elliptic curves.

The subsequent part, {\bf Section \ref{applications-sec}}, is devoted to an interesting arithmetic consequence of Igusa's theorem: we show (Theorem \ref{main-application-thm}) that on a non-isotrivial elliptic curve $E_{/F}$ there are only finitely many torsion points rational over abelian extensions of $F$. Results in the same spirit have been applied in various arithmetic contexts (see, e.g., \cite{bl}, \cite{bre}, \cite{bro}, \cite{vi}), but it seems that the statement above was never explicitly proved in this form.

The article is closed by {\bf Appendix A}, which deals with isotrivial elliptic curves over $F$. These are precisely the elliptic curves over $F$ having a ring of endomorphisms which is larger than $\Z$, and we show that in this situation Theorem \ref{igusa2-thm} is always false. In this case the image of Galois is not open in $\hat\Gamma$, hence in particular it cannot be ``as large as possible'' (in analogy to what happens with CM elliptic curves over number fields).\\

To conclude this introduction, we would like to spend a few words on the background required of the reader. In order to make this note reasonably self-contained, we have tried to keep the prerequisites to a minimum. In fact, apart from basic results in Galois theory and algebraic number theory, we only assume a knowledge of the first definitions and properties in the arithmetic of elliptic curves over local and global fields as treated, for example, in Chapters VII and VIII of Silverman's book \cite{si1}. In particular, we have made an effort not to rely on results in the theory of Lie algebras and Lie groups, contrary to what done in \cite{se2} over number fields. As a consequence, we think that our exposition is more elementary and down-to-earth than those in \cite{la} and \cite{se2}. Moreover, when we introduce more specific notions (e.g., Faltings heights of elliptic curves, Tate's theory of analytic uniformization) we always give complete definitions and suggest references where the interested reader can find details and proofs we have to omit.\\

\noindent\emph{Convention.} Throughout the paper we assume (unless otherwise stated) that $p>3$. This condition is crucially exploited in the proof of
Theorem \ref{shafarevich-thm} (Shafarevich's theorem) to get a uniform upper bound on the degree of the conductor of certain
elliptic curves. We remark, however, that Igusa's results are valid in any positive characteristic.\\

\noindent\emph{Acknowledgements.} We would like to thank Matthias Sch\"utt for useful comments on an earlier version of the paper and Bert Van
Geemen for helpful conversations on some of the topics of this work. We are also grateful to Matthew Baker for pointing us the article \cite{z}
and to Chris Hall for interesting remarks and for showing us alternative proofs of some of the results in this paper. Finally, we thank the
anonymous referee for several valuable remarks and suggestions which led to significant improvements in the exposition.

\section{Faltings heights and a theorem of Shafarevich} \label{faltings-sec}

In this section we want to prove two auxiliary results (Theorem \ref{shafarevich-thm} and Theorem \ref{irreducibility-thm}) that will be crucially employed in the course of our main arguments.

\subsection{Review of Faltings heights of elliptic curves} \label{faltings-heights-subsec}

Before giving precise definitions in the situation we are interested in, let us briefly describe the idea of ``heights'' over global fields in its most basic form. Intuitively speaking, in all its various manifestations the notion of height captures the ``size'' or ``complexity'' of objects of an arithmetic nature. In the simplest case, take a point $P$ in the projective space $\PP^n(\Q)$. Since $\Z$ is a PID, we can find homogeneous coordinates for $P$ of the form
\[ P=[x_0:\dots:x_n] \]
with $x_0,\dots,x_n\in\Z$ and the greatest common divisor of the $x_i$ equal to $1$. Then the \emph{height} of $P$ is naturally defined as
\[ h(P):=\max\bigl\{|x_0|,\dots,|x_n|\bigr\}. \]
Notice that the set
\[ \bigl\{P\in\PP^n(\Q)\mid h(P)\leq C\bigr\} \]
is finite for any constant $C$ (in fact, it has fewer than $(2C+1)^{n+1}$ elements). This sort of finiteness property is one of the most useful features of a well-defined height function (cf. Propositions \ref{finiteness-pro} and \ref{finiteness-height-pro} below). A similar definition can be given over any global field, and this can be usefully applied (at least in principle) to obtain finiteness results for much more complex arithmetic objects. For example, one can embed the (compactified) moduli space of (isomorphism classes of) elliptic curves into a suitable projective space, and then compute the height of an elliptic curve over a global field by means of the chosen embedding. As a consequence, for every $C$ there will be only finitely many (isomorphism classes of) elliptic curves whose height is bounded by $C$. Actually, this idea can be effectively exploited without going through all the geometry which underlies the above considerations. This is achieved by making \emph{a posteriori} all the definitions explicit and then proving that one has a height function on the objects of interest which enjoys the desired formal properties. As will become apparent below, this will be the course taken in our article.\\

After this brief panoramic detour, let us return to our characteristic $p$ setting. Retain the previous notation; in particular, $F$ is the function field of $\cC_{/\mathbb F_r}$ and $A$ is the subring of functions in $F$ that are regular outside the closed point $\infty$. Finally, let $\Sigma_F$ be the set of places of the global field $F$, and if $\p\in\Sigma_F$ denote by $v_{\mathfrak p}$ the discrete valuation associated with $\p$. Note that the elements of $\Sigma_F$ correspond to the (closed) points of $\cC$.

For any $x\in F^\times$ define the principal divisor of $x$ as
\[ (x):=\sum_{\mathfrak p\in\Sigma_F}v_{\mathfrak p}(x)\cdot\mathfrak p. \]
Recall that the degree of a prime $\p\in\Sigma_F$ is by definition the degree over $\F_r$ of the residue field of $\p$, and by linearity the degree of any divisor of $F$ can be introduced; it can be checked that the degree of $(x)$ is $0$ (see, e.g., \cite[Proposition 5.1]{r}). The zero divisor of $x$ is
\[ (x)_0:=\sum_{\mathfrak p\in\Sigma_F}\max\{0,v_{\mathfrak p}(x)\}\cdot\mathfrak p \]
and its pole divisor is
\[ (x)_\infty:=(x^{-1})_0, \]
so we can  write $(x)=(x)_0-(x)_\infty$. We define the \emph{$F$-height} of $x$ to be
\[ h_F(x):=\deg\bigl((x)_0\bigr)=\deg\bigl((x)_\infty\bigr)\in\N. \]
\begin{rem} \label{faltings-rem}
It turns out that $h_F(x)\not=0$ if and only if $x\notin\F_r$ (i.e., if and only if $x$ is not constant), and then
\[ h_F(x)=[F:\F_r(x)]. \]
For a proof of this fact, see \cite[Proposition 5.1]{r}. Observe also that, in the language of algebraic geometry, the integer $h_F(x)$ is the degree of the rational map $\cC_{/\mathbb F_r}\rightarrow\PP^1_{/\mathbb F_r}$ induced by $x$.
\end{rem}
Let now $E_{/F}$ be an elliptic curve and let $\D_E$ be its minimal discriminant divisor; it is the effective divisor of $F$ defined as
\begin{equation} \label{disc-eq}
\D_E:=\sum_{\mathfrak p\in\Sigma_F}v_{\mathfrak p}(\Delta_{\mathfrak p})\cdot\mathfrak p
\end{equation}
where $\Delta_\mathfrak p$ is the discriminant of a minimal Weierstrass equation for $E$ at $\mathfrak p$ in the sense of \cite[Ch. VII, \S 1]{si1} (see also \cite[\S 2]{f1} and \cite[Ch. VIII, \S 8]{si1}). For the purposes of the present paper, we give
\begin{dfn} \label{faltings-dfn}
The \emph{Faltings height} (over $F$) of $E_{/F}$ is the rational number
\[ h_F(E):=\frac{1}{12}\deg(\D_E)\geq0. \]
\end{dfn}
\begin{rem}
Although the notation adopted is the same, the height on $F^\times$ and the Faltings height over $F$ are calculated on objects of a different nature, so no confusion is likely to arise.
\end{rem}
The height $h_F(E)$ is an invariant of the class of $F$-isomorphism of $E_{/F}$. There is another natural notion of height of an elliptic curve $E_{/F}$, essentially equal to the $F$-height of its $j$-invariant $j(E)$, that we recall below.
\begin{dfn} \label{geom-faltings-dfn}
The \emph{geometric Faltings height} (over $F$) of $E_{/F}$ is the rational number
\[ h_{F,g}(E):=\frac{1}{12}h_F\bigl(j(E)\bigr)\geq0. \]
\end{dfn}
The height $h_{F,g}(E)$ is evidently an invariant of the $\bar{F}$-isomorphism class of $E_{/F}$; moreover, by Remark \ref{faltings-rem}:
\[ h_{F,g}(E)>0 \quad \Longleftrightarrow \quad \text{$E$ is not isotrivial.} \]
The following proposition establishes a fundamental relation between $h_F$ and $h_{F,g}$.
\begin{pro} \label{heights-diseq-pro}
The inequality
\[ h_{F,g}(E)\leq h_F(E) \]
holds for every elliptic curve $E_{/F}$.
\end{pro}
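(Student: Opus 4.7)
The plan is to reduce everything to a local, prime-by-prime comparison of divisors, exploiting the classical relation $j=c_4^3/\Delta$ between the $j$-invariant and the standard invariants of a Weierstrass equation (see, e.g., \cite[Ch.~III, \S 1]{si1}). Since both heights are defined as $1/12$ times the degree of an effective divisor, it suffices to prove the divisor inequality
\[
(j(E))_\infty \;\leq\; \mathcal D_E
\]
coefficient by coefficient in the free abelian group on $\Sigma_F$, and then take degrees.

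Fix a place $\mathfrak p\in\Sigma_F$ and choose a Weierstrass equation for $E$ that is minimal at $\mathfrak p$, with associated invariants $c_4,c_6,\Delta$. By minimality the coefficients $a_1,\dots,a_6$ lie in the local ring $\mathcal O_{F,\mathfrak p}$, hence so does $c_4$, giving $v_{\mathfrak p}(c_4)\geq 0$; moreover, by definition $v_{\mathfrak p}(\Delta)=v_{\mathfrak p}(\Delta_{\mathfrak p})$, the $\mathfrak p$-component of $\mathcal D_E$. From $j(E)=c_4^3/\Delta$ we read off
\[
v_{\mathfrak p}\bigl(j(E)\bigr) \;=\; 3\,v_{\mathfrak p}(c_4)-v_{\mathfrak p}(\Delta_{\mathfrak p}),
\]
so that the pole contribution of $j(E)$ at $\mathfrak p$ is
\[
v_{\mathfrak p}\bigl((j(E))_\infty\bigr) \;=\; \max\bigl\{0,\;v_{\mathfrak p}(\Delta_{\mathfrak p})-3\,v_{\mathfrak p}(c_4)\bigr\}.
\]
Using $v_{\mathfrak p}(c_4)\geq 0$ and $v_{\mathfrak p}(\Delta_{\mathfrak p})\geq 0$ we immediately conclude
\[
v_{\mathfrak p}\bigl((j(E))_\infty\bigr) \;\leq\; v_{\mathfrak p}(\Delta_{\mathfrak p}).
\]

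This inequality, which holds at every $\mathfrak p$, gives at once $(j(E))_\infty\leq\mathcal D_E$ as divisors of $F$. Taking degrees we find $h_F(j(E))=\deg((j(E))_\infty)\leq\deg(\mathcal D_E)=12\,h_F(E)$, and dividing by $12$ yields the desired inequality $h_{F,g}(E)\leq h_F(E)$. Note that the (degenerate) cases in which $E$ is isotrivial, or equivalently $j(E)\in\overline{\mathbb F}_r$, are covered automatically: then $(j(E))_\infty=0$ and the inequality is trivial. The only point that requires care is the justification of $v_{\mathfrak p}(c_4)\geq 0$ from the existence of a minimal Weierstrass model at each prime; this is an entirely standard fact (in particular independent of the characteristic assumption $p>3$), so no real obstacle arises in the argument.
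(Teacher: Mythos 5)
Your proof is correct and is essentially the same argument as the paper's: both hinge on the local relation $j(E)=c_{4}^{3}/\Delta$ at a $\mathfrak p$-minimal Weierstrass equation, combined with $v_{\mathfrak p}(c_4)\geq 0$ (integrality of the minimal model), to obtain $-v_{\mathfrak p}(j(E))\leq v_{\mathfrak p}(\Delta_{\mathfrak p})$ at every place; the paper then sums this over the support of $j(E)_\infty$ while you package the same inequality as a coefficient-wise comparison of the divisors $(j(E))_\infty$ and $\mathcal D_E$ before taking degrees, a purely presentational difference.
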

\begin{proof} Let $E_{/F}$ be an elliptic curve, and set
\[ T:=\bigl\{\mathfrak p\in\Sigma_F\mid v_{\mathfrak p}(\Delta_\mathfrak p)>0\bigr\}, \qquad T':=\bigl\{\mathfrak p\in\Sigma_F\mid v_{\mathfrak p}(j(E))<0\bigr\} \]
where $\Delta_\mathfrak p$ is the discriminant for $E$ at $\mathfrak p$ introduced in \eqref{disc-eq}. Note that $T'$ consists of the places at which $E$ does not have potential good reduction (cf. \cite[Ch. VII, Proposition 5.5]{si1}). Locally at $\mathfrak p$ we can write $j(E)=c^3_{4,\mathfrak p}\big/\Delta_{\mathfrak p}$ where $c_{4,\mathfrak p}$ is a polynomial expression in the coefficients of a minimal Weierstrass equation for $E$ at $\mathfrak p$ (see \cite[Ch. III, \S 1]{si1} for a precise formula). In particular, $c_{4,\mathfrak p}$ is an integer in the completion $F_{\mathfrak p}$ of $F$ at $\p$. Thus we obtain:
\[ v_{\mathfrak p}(j(E))=3v_{\mathfrak p}(c_{4,\mathfrak p})-v_{\mathfrak p}(\Delta_{\mathfrak p})\geq-v_{\mathfrak p}(\Delta_{\mathfrak p}). \]
We immediately deduce that
\begin{itemize}
\item $T'\subset T$;
\item $-v_{\mathfrak p}(j(E))\leq v_{\mathfrak p}(\Delta_{\mathfrak p})$.
\end{itemize}
Now, by equation \eqref{disc-eq} we can write
\[ \D_E=\sum_{\mathfrak p\in T}v_{\mathfrak p}(\Delta_{\mathfrak p})\cdot\mathfrak p. \]
Hence:
\begin{align*}
h_F\bigl(j(E)\bigr)&=\sum_{\mathfrak p\in T'}\bigl(-v_{\mathfrak p}(j(E))\bigr)\deg(\mathfrak p)\\
                   &\leq\sum_{\mathfrak p\in T'}v_{\mathfrak p}(\Delta_{\mathfrak p})\deg(\mathfrak p)\\
                   &\leq\sum_{\mathfrak p\in T}v_{\mathfrak p}(\Delta_{\mathfrak p})\deg(\mathfrak p)=\deg(\D_E),
\end{align*}
and this proves the proposition by definition of the two Faltings heights. \end{proof}
\begin{rem}
The heights $h_F$ and $h_{F,g}$, introduced for abelian varieties of arbitrary dimension by Faltings in his landmark paper \cite{fa} in which he proved (among others) the Mordell conjecture, admit the simple expressions given in Definitions \ref{faltings-dfn} and \ref{geom-faltings-dfn} because we are working with elliptic curves (i.e., in dimension one) and our base fields are function fields. In particular, in our setting there are no archimedean valuations, so no logarithmic error terms appear in the expression of $h_F(E)$ (cf. \cite[Proposition 1.1]{si2} for a formula in the number field case). The reader may wish to consult \cite{hs} for details and for results related to Lang's conjecture on lower bounds for the N\'eron-Tate canonical height of non-torsion points on an elliptic curve $E$ in terms of the Faltings height $h_F(E)$ (see, in particular, \cite[Theorem 6.1]{hs}).

For a general discussion of the theory of heights of abelian varieties over global fields (albeit with a slant towards the number field setting) we refer the reader to the papers \cite{d} by Deligne and \cite{sz} by Szpiro. In any case, the notion of height of an elliptic curve will play in the sequel only an auxiliary (and limited) role, so in order to keep things as plain as possible we decided to adopt the somewhat \emph{ad hoc} definitions given above.
\end{rem}
\begin{rem}
By normalizing by the so-called ``degree'' of $F$ (which is defined as the smallest value of $h_F(x)$ with $x$ varying in the non-constant elements of $F$, cf. \cite[\S 1]{f2}), it would be possible to modify Definitions \ref{faltings-dfn} and \ref{geom-faltings-dfn} and introduce ``absolute'' versions of the heights of $E$ which do not depend on the field taken as field of definition of $E$. However, since the properties of the functions $h_F$ and $h_{F,g}$ will suffice for our goals, in this note we are content with the ``relative'' (and more common) notions explained above.
\end{rem}
\begin{lem} \label{finiteness-lem}
If $\mathfrak d$ is a divisor of $F$ then the number of the $x\in F^\times$ such that $(x)=\mathfrak d$ is either $0$ or the cardinality of $\F^\times_r$.
\end{lem}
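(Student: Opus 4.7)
The plan is to show that if the set $S_{\mathfrak d}:=\{x\in F^\times\mid (x)=\mathfrak d\}$ is non-empty then it is a torsor under $\F_r^\times$ acting by multiplication. Concretely, fix any $x_0\in S_{\mathfrak d}$; I want to establish a bijection
\[ \F_r^\times\longrightarrow S_{\mathfrak d},\qquad c\longmapsto cx_0. \]
Injectivity is clear, and the map is well defined because $(cx_0)=(c)+(x_0)=0+\mathfrak d=\mathfrak d$ for every $c\in\F_r^\times$. Surjectivity amounts to showing that if $x\in S_{\mathfrak d}$ then the ratio $y:=x/x_0\in F^\times$ lies in $\F_r^\times$. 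Since $(y)=(x)-(x_0)=\mathfrak d-\mathfrak d=0$, this reduces the lemma to the single assertion
\[ y\in F^\times,\ (y)=0\ \Longrightarrow\ y\in\F_r^\times. \]

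The key step is therefore precisely this implication. Geometrically, the vanishing of $(y)$ says that $y$ has neither zeros nor poles on $\cC$, so $y$ is a global section of the structure sheaf: $y\in H^0(\cC,\cO_{\mathcal C})$. Because $\cC$ is a smooth, projective, geometrically irreducible curve over $\F_r$, the $\F_r$-algebra $H^0(\cC,\cO_{\mathcal C})$ is a finite field extension of $\F_r$ which is contained in $F$; by the irreducibility hypothesis recalled in the introduction, $\F_r$ is the field of constants of $F$, i.e.\ it is algebraically closed in $F$, which forces $H^0(\cC,\cO_{\mathcal C})=\F_r$. Hence $y\in\F_r$, and since $y\neq 0$ in fact $y\in\F_r^\times$, as required. (An entirely elementary alternative is available: the degree-zero assertion recalled before the definition of $h_F(x)$ together with Remark \ref{faltings-rem} gives $h_F(y)=0$ whenever $(y)=0$, and Remark \ref{faltings-rem} identifies this with the condition $y\in\F_r$.)

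With that implication in hand the proof is complete: either $S_{\mathfrak d}=\varnothing$, or after fixing $x_0\in S_{\mathfrak d}$ the map $c\mapsto cx_0$ is a bijection between $\F_r^\times$ and $S_{\mathfrak d}$, so $\#S_{\mathfrak d}=\#\F_r^\times$. The only real content is the identification of the constant functions on $\cC$ with $\F_r$, and this is already packaged into the running hypothesis that $\F_r$ is the field of constants of $F$; I do not anticipate any further obstacle.
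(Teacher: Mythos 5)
Your proof is correct and follows essentially the same approach as the paper: both reduce the claim to the single fact that $(y)=0$ implies $y\in\F_r^\times$, the paper citing \cite[Proposition 5.1]{r} directly where you instead invoke $H^0(\cC,\cO_{\mathcal C})=\F_r$ (and note the same citation as an alternative). The torsor-under-$\F_r^\times$ framing is a slightly more explicit packaging of the paper's argument, but the substance is identical.
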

\begin{proof} If the set of such $x$ is not empty, let $x,y\in F^\times$ be such that $(x)=(y)$. Then $(xy^{-1})=0$, and we conclude that $xy^{-1}\in \F^\times_r$ by \cite[Proposition 5.1]{r}. \end{proof}
\begin{pro} \label{finiteness-pro}
The set $\{x\in F^\times\mid h_F(x)\leq C\}$ is finite for all $C\geq0$.
\end{pro}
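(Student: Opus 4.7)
The plan is to reduce the finiteness of $\{x\in F^\times\mid h_F(x)\leq C\}$ to a finiteness statement about effective divisors of bounded degree on $\cC$, and then exploit the fact that $\cC$ is defined over a finite field.

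First I would invoke Lemma \ref{finiteness-lem}: since the fibers of the map $x\mapsto(x)$ from $F^\times$ to the divisor group of $F$ have cardinality at most $|\F_r^\times|<\infty$, it suffices to show that the set of principal divisors
\[ \mathcal{D}_C:=\bigl\{(x)\mid x\in F^\times,\ h_F(x)\leq C\bigr\} \]
is finite. By definition $(x)=(x)_0-(x)_\infty$, where $(x)_0$ and $(x)_\infty$ are effective divisors with $\deg((x)_0)=\deg((x)_\infty)=h_F(x)\leq C$. Hence the inclusion
\[ \mathcal{D}_C\subset\bigl\{\mathfrak{d}_1-\mathfrak{d}_2\mid \mathfrak{d}_1,\mathfrak{d}_2\ \text{effective},\ \deg(\mathfrak{d}_i)\leq C\bigr\} \]
reduces the problem to the finiteness of the set $\mathcal{E}_C$ of effective divisors on $\cC$ of degree at most $C$.

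Next I would argue that $\mathcal{E}_C$ is finite. An effective divisor of degree $\leq C$ is a nonnegative integer combination $\sum_{\mathfrak{p}} n_\mathfrak{p}\cdot\mathfrak{p}$ with $\sum_\mathfrak{p} n_\mathfrak{p}\deg(\mathfrak{p})\leq C$, supported on finitely many closed points of degree at most $C$. Because $\cC_{/\F_r}$ is a curve over a finite field, the set of closed points $\mathfrak{p}\in\Sigma_F$ with $\deg(\mathfrak{p})=d$ is finite for each $d$: indeed such points correspond to Galois orbits of $\cC(\F_{r^d})$, and $\cC(\F_{r^d})$ is finite. Summing over $d=1,\dots,C$ yields a finite set of admissible primes, and the number of effective divisors of bounded total degree supported on such a finite set is clearly finite. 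Combining this with the previous step and Lemma \ref{finiteness-lem} gives the result.

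There is no real obstacle here; the only point worth verifying carefully is the finiteness of closed points of bounded degree on $\cC$, which is an immediate consequence of $\F_r$ being finite and $\cC(\F_{r^d})$ being a finite set for every $d\geq1$. The rest is bookkeeping with the decomposition $(x)=(x)_0-(x)_\infty$ and the observation that $h_F(x)$ bounds the degrees of both the zero and pole divisors.
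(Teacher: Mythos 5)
Your proof is correct and follows essentially the same route as the paper: bound the degrees of $(x)_0$ and $(x)_\infty$, reduce to the finiteness of effective divisors of bounded degree, and use Lemma \ref{finiteness-lem} to control the fibres of $x\mapsto\bigl((x)_0,(x)_\infty\bigr)$. The only (harmless) difference is that you prove the finiteness of $\mathcal{E}_C$ directly via the finiteness of $\cC(\F_{r^d})$, whereas the paper cites \cite[Lemma 5.5]{r} for the same fact.
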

\begin{proof} Since the function $h_F$ is $\N$-valued on $F^\times$, the claim of the proposition is equivalent to the assertion that the set
\[ \mathscr B_n:=\{x\in F^\times\mid h_F(x)=n\} \]
is finite for all integers $n\geq0$. To begin with, note that if $h_F(x)=n$ then both $(x)_0$ and $(x)_\infty$ are effective divisors of degree $n$. Since $F$ has a finite field of constants, by \cite[Lemma 5.5]{r} the set $\texttt{Div}^+_n$ of effective divisors of degree $n$ is finite. We immediately conclude from Lemma \ref{finiteness-lem} that every fibre of the natural map
\[ \begin{array}{ccc}
   \mathscr B_n & \longrightarrow & \texttt{Div}^+_n\times\texttt{Div}^+_n \\[2mm]
   x & \longmapsto & \bigl((x)_0,(x)_\infty\bigr)
   \end{array} \]
is finite, and the proposition is proved. \end{proof}
As a straightforward consequence of the above proposition, we get
\begin{cor} \label{geom-finiteness-cor}
For all $C\geq0$ there are only finitely many $\bar{F}$-isomorphism classes of elliptic curves $E_{/F}$ with $h_{F,g}(E)\leq C$.
\end{cor}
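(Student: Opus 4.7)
The plan is to reduce the statement directly to Proposition \ref{finiteness-pro} applied to the $j$-invariant, together with the standard classification of elliptic curves up to $\bar F$-isomorphism by the $j$-invariant.

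First, I would unwind Definition \ref{geom-faltings-dfn}: the condition $h_{F,g}(E)\leq C$ is equivalent to $h_F\bigl(j(E)\bigr)\leq 12C$. If $j(E)\neq 0$, then $j(E)\in F^\times$ and Proposition \ref{finiteness-pro} ensures that $j(E)$ lies in the finite set $\{x\in F^\times\mid h_F(x)\leq 12C\}$. The single additional value $j(E)=0$ (for which $h_F$ is not defined but which corresponds to just one isomorphism class over $\bar F$) contributes at most one further class. Consequently, the set
\[
J_C:=\bigl\{j(E)\in F\,\bigm|\,E_{/F}\text{ elliptic with }h_{F,g}(E)\leq C\bigr\}
\]
is finite.

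Second, I would invoke the classical fact that two elliptic curves defined over $F$ become isomorphic over $\bar F$ if and only if they share the same $j$-invariant (see, e.g., \cite[Ch. III, Proposition 1.4(b)]{si1}). Therefore the assignment $E\mapsto j(E)$ descends to an injection from the set of $\bar F$-isomorphism classes of elliptic curves $E_{/F}$ with $h_{F,g}(E)\leq C$ into the finite set $J_C$, which proves the corollary.

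I do not foresee a genuine obstacle here: the result is a formal consequence of Proposition \ref{finiteness-pro} and the characterization of $\bar F$-isomorphism via $j$, which is precisely why the statement is phrased as a corollary. The only minor point worth flagging is the bookkeeping for $j=0$, handled above by treating it separately.
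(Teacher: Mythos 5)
Your proof is correct and takes the same approach as the paper's, which simply says the corollary is ``immediate from Proposition \ref{finiteness-pro} by definition of $h_{F,g}(E)$''; you are merely spelling out the two ingredients (finiteness of $\{x\in F^\times\mid h_F(x)\leq 12C\}$, and classification of $\bar F$-isomorphism classes by $j$) that the paper leaves tacit. Your separate treatment of $j(E)=0$ is a reasonable bit of bookkeeping that the paper does not bother to address.
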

\begin{proof} Immediate from Proposition \ref{finiteness-pro} by definition of $h_{F,g}(E)$. \end{proof}
We end this $\S$ with a finiteness property of the Faltings height $h_F$ that will be needed in \S \ref{shafarevich-subsec} for the proof of Shafarevich's theorem. Before stating this result, we introduce one more piece of notation: if $S\subset\Sigma_F$ is a finite set we let $A_S$ be the ring of $S$-integers of $F$. In other words:
\[ A_S:=\bigl\{x\in F\mid \text{$v_{\mathfrak p}(x)\geq0$ for all $\p\notin S$}\bigr\}. \]
\begin{pro} \label{finiteness-height-pro}
For all $C\geq0$ there are only finitely many $F$-isomorphism classes of non-isotrivial elliptic curves $E_{/F}$ satisfying $h_F(E)\leq C$.
\end{pro}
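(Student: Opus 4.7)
The plan is to combine the inequality $h_{F,g}\leq h_F$ with the geometric finiteness already established in Corollary \ref{geom-finiteness-cor}, and then, inside each $\bar F$-isomorphism class, to analyze the resulting quadratic twists via the function field analogue of Dirichlet's $S$-unit theorem.

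By Proposition \ref{heights-diseq-pro} one has $h_{F,g}(E)\leq h_F(E)\leq C$, so Corollary \ref{geom-finiteness-cor} reduces the problem to fixing a non-isotrivial $E_0/F$ and bounding the number of $F$-isomorphism classes of curves $E/F$ with $E\cong_{\bar F}E_0$ and $h_F(E)\leq C$. Non-isotriviality of $E_0$ gives $j(E_0)\notin\bar\F_r$, hence $j(E_0)\neq 0,1728$; combined with the standing hypothesis $p>3$ this forces $\mathrm{Aut}_{\bar F}(E_0)=\{\pm 1\}$, so the curves in question are precisely the quadratic twists $E_0^d$ with $d$ varying over $F^\times/(F^\times)^2$.

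Next, I would estimate the minimal discriminant divisor of $E_0^d$. Choose a short Weierstrass model $y^2=x^3+ax+b$ for $E_0$ (available since $p>3$); then $E_0^d$ has the model $y^2=x^3+ad^2x+bd^3$ of discriminant $d^6\Delta_{E_0}$. A direct change-of-variable calculation with $x\mapsto u^2x$, $y\mapsto u^3y$ shows that at a prime $\mathfrak p$ of good reduction for $E_0$ one has
\[ v_{\mathfrak p}(\Delta^{\min}_{E_0^d,\mathfrak p})=\begin{cases}0 & \text{if } v_{\mathfrak p}(d) \text{ is even},\\ 6 & \text{if } v_{\mathfrak p}(d) \text{ is odd}.\end{cases} \]
At each of the finitely many primes $\mathfrak p$ in the set $S$ of bad reduction of $E_0$, the value $v_{\mathfrak p}(\Delta^{\min}_{E_0^d,\mathfrak p})$ depends only on the image of $d$ in $F_{\mathfrak p}^\times/(F_{\mathfrak p}^\times)^2$, a group of order at most $4$, and is thus bounded by a constant depending only on $E_0$ and $\mathfrak p$. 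Hence
\[ 12C\geq\deg\D_{E_0^d}\geq 6\sum_{\substack{\mathfrak p\notin S\\ v_{\mathfrak p}(d)\text{ odd}}}\deg(\mathfrak p), \]
and since $\cC$ has only finitely many closed points of any fixed degree (\cite[Lemma 5.5]{r}), there is a finite set $\Sigma_0\supset S$, depending only on $E_0$ and $C$, such that every admissible $d$ has $v_{\mathfrak p}(d)$ even for all $\mathfrak p\notin\Sigma_0$.

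Finally, I would conclude by showing that the subgroup $K\subset F^\times/(F^\times)^2$ of classes with even valuation outside $\Sigma_0$ is finite. This rests on two standard finiteness results: the class group $\mathrm{Cl}(A_{\Sigma_0})$ is finite, as a quotient of $\mathrm{Pic}(\cC)$ whose torsion part $\mathrm{Pic}^0(\cC)$ is the finite group of $\F_r$-rational points of the Jacobian of $\cC$; and $A_{\Sigma_0}^\times$ is finitely generated by the function field version of Dirichlet's $S$-unit theorem, so $A_{\Sigma_0}^\times/(A_{\Sigma_0}^\times)^2$ is finite. An explicit computation with the divisor map $d\mapsto\bigl[\tfrac12\sum_{\mathfrak p\notin\Sigma_0}v_{\mathfrak p}(d)\,\mathfrak p\bigr]$ places $K/(F^\times)^2$ in an exact sequence with a subgroup of $\mathrm{Cl}(A_{\Sigma_0})[2]$ on one side and $A_{\Sigma_0}^\times/(A_{\Sigma_0}^\times)^2$ on the other, both finite, completing the proof. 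The main technical obstacle lies in the middle step: the discriminant bookkeeping under twisting, specifically verifying minimality of the twisted Weierstrass model at good-reduction primes with odd $v_{\mathfrak p}(d)$ and producing the $d$-independent bound on the contributions from primes in $S$.
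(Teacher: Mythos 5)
Your proposal is correct and follows the same overall strategy as the paper: reduce via $h_{F,g}\leq h_F$ and Corollary \ref{geom-finiteness-cor} to bounding the number of $F$-isomorphism classes inside a fixed $\bar F$-isomorphism class, identify those as quadratic twists $E_0^d$ (your explicit remark that $j(E_0)\notin\bar\F_r$ forces $j\neq 0,1728$ and hence $\mathrm{Aut}_{\bar F}(E_0)=\{\pm 1\}$ is a nice way to justify this; the paper cites \cite[Ch.~X, Prop.~5.4]{si1}), control the discriminant of $E_0^d$ to force $v_\p(d)$ even outside a fixed finite set, and finish with $S$-unit finiteness. The two arguments diverge in how they treat the class group. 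The paper first enlarges the set of bad primes to a finite $T$ with $A_T$ a PID (this is where the finiteness of $\mathrm{Pic}^0(\cC)$ is used, implicitly) and invokes \cite[Ch.~VIII, Prop.~8.7]{si1} to get a global model with $\Delta\in A_T^\times$; then the closing step is simply that representatives $d$ may be taken in $A_S^\times$, and $A_S^\times/(A_S^\times)^2$ is finite. You keep $\Sigma_0$ arbitrary and instead appeal directly to the finiteness of $\mathrm{Cl}(A_{\Sigma_0})$, assembling an exact sequence squeezing $K$ between a quotient of $A_{\Sigma_0}^\times/(A_{\Sigma_0}^\times)^2$ and a subgroup of $\mathrm{Cl}(A_{\Sigma_0})[2]$. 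Both are valid and rely on the same underlying finiteness facts, just arranged differently; the paper's PID trick makes the last step shorter, while yours is perhaps more transparent about which finiteness theorems are in play. Your computation that the local contribution at an odd-twist good prime is exactly $6$ (type $\mathrm{I}_0^*$) is sharper than the paper's crude bound $v_\p(\Delta_{E_d})\geq 1$, though the refinement is not needed. One small point you should tighten: you need the chosen global short Weierstrass model for $E_0$ to be $\p$-minimal at the good-reduction primes you analyze, and no global minimal model need exist; either argue as the paper does via Silverman's result to arrange $\Delta\in A_T^\times$, or simply enlarge your $S$ to absorb the finitely many good-reduction primes where the chosen model fails to be minimal.
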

See \cite[Corollary 2.5]{si2} for the corresponding statement over number fields.
\begin{proof} Since $h_{F,g}(E)\leq h_F(E)$ by Proposition \ref{heights-diseq-pro}, Corollary \ref{geom-finiteness-cor} implies that there are only finitely many $\bar{F}$-isomorphism classes of (non-isotrivial) elliptic curves $E_{/F}$ with $h_F(E)\leq C$. Thus we are reduced to the following problem: given a non-isotrivial elliptic curve $E_{/F}$, up to $F$-isomorphism there are only finitely many elliptic curves $E'_{/F}$ satisfying
\[ j(E')=j(E), \qquad h_F(E')\leq C. \]
So fix $E$ as above, and note that (by the very definition of the Faltings height!) bounding $h_F(E')$ is equivalent to bounding $\deg(\D_{E'})$.

Now choose a finite set of places $T\subset\Sigma_F$ such that
\begin{itemize}
\item $T$ contains all places of bad reduction for $E$;
\item the ring $A_T$ of $T$-integers is a PID.
\end{itemize}
Recall that $p=\text{char}(F)>3$; then by \cite[Ch. VIII, Proposition 8.7]{si1} we can find an affine Weierstrass equation
\[ E: y^2=x^3+ax+b \]
for $E_{/F}$ with $a,b\in A_T$ and discriminant $\Delta(E):=-16(4a^3+27b^2)\in A^\times_T$ (this result is proved in \emph{loc. cit.} for elliptic curves over number fields, but the arguments work over global function fields too). The elliptic curves over $F$ which are isomorphic over $\bar{F}$ to $E$ are the twists of $E$ given by the equations
\[ E_d: y^2=x^3+ad^2x+bd^3, \qquad d\in F^\times. \]
Furthermore, $E_d\cong E_{d'}$ \emph{over} $F$ if and only if $(d/d')\in(F^\times)^2$. For a proof of these facts, see \cite[Ch. X, Proposition 5.4 and Corollary 5.4.1]{si1}. Our goal is to bound the set of such $d\pmod{(F^\times)^2}$. First of all, without loss of generality we can assume that $d\in A_T$. Moreover, a direct computation shows that
\begin{equation} \label{disc-twist-eq}
\Delta(E_d)=d^6\Delta(E).
\end{equation}
Since $E_d$ is isomorphic to $E$ over a finite extension of $F$ and $E$ has good reduction outside $T$, it follows from \cite[Ch. VII, Proposition 5.4 (b)]{si1} that $E_d$ has either good or additive reduction at primes outside $T$. From \eqref{disc-twist-eq}, if $E_d$ has additive reduction at $\mathfrak p\notin T$ then $v_{\mathfrak p}(d)>0$. More precisely,  if $\mathfrak p\notin T$ then $E_d$ has additive reduction at $\mathfrak p$ if and only if $v_{\mathfrak p}(d)\equiv1\pmod{2}$.\footnote{In fact, if $v_{\mathfrak p}(d)\equiv0\pmod{2}$ then $E_d$ is isomorphic over $F$ to the curve $E_{d'}$ with $d':=d\pi_{\mathfrak p}^{-v_{\mathfrak p}(d)}$, where $\pi_{\mathfrak p}\in A_T$ is such that $v_{\mathfrak p}(\pi_{\mathfrak p})=1$ and $v_{\mathfrak p'}(\pi_{\mathfrak p})=0$ for all primes $\mathfrak p'\notin T$, $\mathfrak p'\not=\mathfrak p$. This shows that $E_d$ has good reduction at $\mathfrak p$.} Hence:
\[ \deg(\D_{E_d})\geq\sum_{\substack{\mathfrak p\notin T\\v_{\mathfrak p}(d)\equiv1\,(2)}}\deg(\p), \]
so a bound on $\deg(\D_{E_d})$ also bounds the degree of those $\p\notin T$ with $v_{\mathfrak p}(d)\equiv1\pmod{2}$. Put $C':=12C$ and define
\[ S:=T\cup\bigl\{\mathfrak p\in\Sigma_F\mid \deg(\p)\leq C'\bigr\}. \]
Since there are only finitely many primes of $F$ of a given degree, the set $S$ is finite. Now observe that the proposition is proved if we show that the set
\begin{equation} \label{quot-set-eq}
\bigl\{d\in F^\times\mid \text{$E_d$ has good reduction outside $S$}\bigr\}\big/(F^\times)^2
\end{equation}
is finite. By the $S$-unit theorem for function fields (see \cite[Proposition 14.2]{r}) we know that the abelian group $A^\times_S/\F^\times_r$ is free of finite rank (equal to $|S|-1$), hence $A^\times_S/(A^\times_S)^2$ is finite. But $A_S$ is a unique factorization domain and $E_d$ has good reduction outside $S$ if and only if $v_{\mathfrak p}(d)\equiv0\pmod{2}$ for all $\mathfrak p\notin S$, so we can assume that $d\in A^\times_S$. Thus the set \eqref{quot-set-eq} injects into $A^\times_S/(A^\times_S)^2$, and we are done. \end{proof}

\subsection{Admissible elliptic curves and the height conjecture} \label{admissible-subsec}

First of all, we introduce a large class of elliptic curves over global function fields of characteristic $p>3$, the so-called ``admissible'' curves: these will be precisely the elliptic curves for which Shafarevich's theorem (Theorem \ref{shafarevich-thm}) holds. Following \cite[Definition 1.2]{f2}, we give
\begin{dfn} \label{admissible-dfn}
An elliptic curve $E_{/F}$ is called \emph{admissible} if the extension $F/\F_r(j(E))$ is finite and separable.
\end{dfn}
Thus an admissible elliptic curve is non-isotrivial. Definition \ref{admissible-dfn} is equivalent to requiring that $j(E)$ is not a $p$th power in $F$.

The next proposition shows that admissibility is preserved under prime-to-$p$ isogenies. This property will be used in the proof of Theorem \ref{irreducibility-thm}.
\begin{pro} \label{admissibility-pro}
Let $E_{/F}$, $E'_{/F}$ be elliptic curves and let $f:E\rightarrow E'$ be an isogeny whose degree is not divisible by $p$. If $E$ is admissible then $E'$ is admissible.
\end{pro}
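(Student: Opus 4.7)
The plan is to exploit the equivalence, recalled right after Definition \ref{admissible-dfn}, between admissibility of $E_{/F}$ and the finiteness and separability of $F/\F_r(j(E))$, and to relate $j(E)$ to $j(E')$ via the classical modular equation. First I reduce to the case that $f$ has cyclic kernel: since $n := \deg f$ is prime to $p$, $\ker f$ is an \'etale $F$-group scheme whose geometric points form a group of the form $\Z/m_1\Z \times \Z/m_2\Z$ with $m_1 \mid m_2$ and $m_1 m_2 = n$. The subgroup scheme $E[m_1]$ coincides with the $m_1$-torsion of $\ker f$, hence is contained in $\ker f$ as an $F$-subgroup scheme, and $f$ factors as $E \xrightarrow{[m_1]} E \xrightarrow{\tilde f} E'$ with $\tilde f$ of cyclic kernel of order $k := m_2/m_1$, still prime to $p$. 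Replacing $f$ by $\tilde f$, the pair $(j(E), j(E'))$ satisfies the classical modular relation $\Phi_k(j(E), j(E')) = 0$, where $\Phi_k(X, Y) \in \Z[X, Y]$ denotes the level-$k$ modular polynomial.

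The key input is that, since $\gcd(k, p) = 1$, the reduction $\Phi_k \bmod p \in \F_p[X, Y]$ is a non-zero polynomial whose $X$-discriminant $D(Y) \in \F_p[Y]$ is non-zero (equivalently, the affine modular curve $Y_0(k)$ has good reduction at $p$ and each of its two projections to the $j$-line is separable in characteristic $p$). Admissibility of $E$ gives that $j(E)$ is transcendental over $\F_r$; combined with the relation $\Phi_k(j(E), j(E')) = 0$ this forces $j(E')$ to be transcendental over $\F_r$ as well, and also $D(j(E')) \neq 0$ in $F$. Consequently $\Phi_k(X, j(E')) \in \F_r(j(E'))[X]$ is a non-zero separable polynomial having $j(E)$ as a root, so $\F_r(j(E), j(E'))/\F_r(j(E'))$ is finite and separable.

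To conclude, consider the tower $\F_r(j(E')) \subseteq \F_r(j(E), j(E')) \subseteq F$: the lower step is finite separable by the previous paragraph, and the upper step is finite separable because admissibility of $E$ yields that $F/\F_r(j(E))$ is finite separable, which descends to the intermediate subfield $\F_r(j(E), j(E'))$. Composing, $F/\F_r(j(E'))$ is finite separable, which says precisely that $E'$ is admissible. The main technical obstacle is the ``generic'' separability of $\Phi_k$ modulo $p$; I will invoke it from the classical theory of modular curves, since the good reduction of $Y_0(k)$ at primes $p$ not dividing $k$ ensures that the two $j$-projections remain separable in characteristic $p$.
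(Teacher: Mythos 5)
Your argument is correct, but it follows the ``highbrow'' route that the paper itself only sketches in the remark immediately after this proposition, rather than the paper's actual proof. The paper's own argument avoids modular polynomials entirely: take an elliptic curve $E_{j'}$ over $\F_r(j')$ with $j$-invariant $j'$, observe that $\F_r(j', E_{j'}[d])$ is finite separable over $\F_r(j')$ because $E_{j'}[d]$ is \'etale (this is Proposition \ref{sep-torsion-pro}), and note that $j$ lies inside that field since $E$, up to $\bar F$-isomorphism, is the quotient of $E_{j'}$ by the subgroup $\Lambda$ corresponding to the kernel of the dual isogeny; from there the same tower argument you run at the end closes the proof. What you do differently is (i) factor $f$ through $[m_1]$ to reduce to a cyclic isogeny of degree $k$ prime to $p$, which is clean and correct, and (ii) replace the torsion-field argument with the modular relation $\Phi_k(j(E),j(E'))=0$ plus non-vanishing of the $X$-discriminant of $\Phi_k \bmod p$, which you outsource to the theory of modular curves as in \cite{dr}. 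Both approaches work, but the paper's is more self-contained; yours imports a non-trivial black box. One imprecision you should fix: you claim that good reduction of $Y_0(k)$ at $p$ ``ensures'' separability of the two $j$-projections, but good reduction of a curve does not by itself force a given morphism to specialize separably (think of Frobenius-type covers). The separability of the $j$-projections really comes from the moduli interpretation in characteristic $p$ --- cyclic subgroups live inside the \'etale group scheme $E[k]$, so the forgetful map is generically \'etale --- which is exactly the same ingredient (\'etaleness of prime-to-$p$ torsion) the paper's elementary proof exploits, only packaged differently. With that justification spelled out, your proof is complete.
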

\begin{proof} For simplicity, write $j:=j(E)$ and $j':=j(E')$. We know that $j$ and $j'$ are linked by an isogeny of degree $d$ not divisible by $p$. Choose an elliptic curve $E_{j'}$ defined over $\F_r(j')$ whose $j$-invariant is equal to $j'$. Then $\F_r(j',E_{j'}[d])$ is separable over $\F_r(j')$ (cf. the references given in the introduction, or adapt the proof of Proposition \ref{sep-torsion-pro} below), and so is $\F_r(j',\Lambda)$ for any subgroup $\Lambda$ of $E_{j'}[d]$. Now $j$ lies in $\F_r(j',\Lambda)$ for some $\Lambda$ as above (take $\Lambda$ equal to the kernel of the corresponding dual isogeny), hence the extension $\F_r(j,j')/\F_r(j')$ is finite and separable. But the extension $F/\F_r(j)$ is finite and separable because $E$ is admissible, hence $F/\F_r(j,j')$ is finite and separable as well. Thus we conclude that $F/\F_r(j')$ is finite and separable, and the proposition is proved. \end{proof}
\begin{rem}
From a highbrow point of view, the separability of the extension $\F_r(j,j')/\F_r(j')$ could also be proved as follows. By decomposing the isogeny $f$ into cyclic (i.e., with cyclic kernel) isogenies we are reduced to the case where $f$ is cyclic of degree $d$. Let $\Phi_d(X,Y)\in\Z[X,Y]$ be the modular polynomial of order $d$, whose definition and main properties can be found, e.g., in \cite[Ch. 5, \S 2]{la}. Then the separability of the above extension can be obtained by exploiting the fact that $\Phi_d(j,j')=0$ and by applying the moduli interpretation explained by Deligne and Rapoport in \cite[Ch. VI, \S 6]{dr}. This separability result is also pointed out by Igusa in the concluding remarks of \cite{i}.
\end{rem}
Now we come to the main result of this $\S$, the so-called ``height conjecture'' for admissible elliptic curves. This conjecture predicts that the Faltings height of an elliptic curve $E_{/F}$ is bounded in terms of the degree of the \emph{conductor} of $E$ and the genus of $F$ (i.e., the genus of the curve $\cC$). The canonical name for this statement is ``height \emph{conjecture}'' because the corresponding assertion in the number field case (and, more generally, for abelian varieties) is still unproved.

Recall that the conductor of an elliptic curve $E$ is the conductor of the Galois representation $T_\ell(E)\otimes\Q_\ell$ (for $\ell\not=p$) as defined in \cite[Ch. IV, \S 10]{si3}. We also refer the reader to \cite[\S 2.1]{se3} for a more conceptual approach to conductors of general $\ell$-adic representations.
\begin{pro}[Height conjecture] \label{height-conj-pro}
Let $E_{/F}$ be an admissible elliptic curve. Then
\begin{equation} \label{height-conj-eq}
h_F(E)\leq\frac{1}{2}\deg(\n_E)+g-1
\end{equation}
where $\n_E$ is the conductor of $E$ over $F$ and $g$ is the genus of $F$.
\end{pro}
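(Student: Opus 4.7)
My plan is to pass to the Kodaira--N\'eron minimal regular model $\pi:\E\to\cC$ of $E_{/F}$ and to compare two natural line bundles on $\cC$: the Hodge bundle $\mathcal{L}:=\pi_*\omega_{\E/\cC}$ and the sheaf of logarithmic differentials $\Omega^1_{\cC}(\log S)$, where $S$ is the reduced divisor on $\cC$ supported at the places of bad reduction of $E$.

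The first step is to identify $h_F(E)$ with $\deg\mathcal{L}$. Using the relative duality isomorphism $R^1\pi_*\cO_{\E}\cong\mathcal{L}^{-1}$ together with the Leray spectral sequence, a short computation gives $\chi(\cO_{\E})=\deg\mathcal{L}$. Noether's formula reads $12\chi(\cO_{\E})=K_{\E}^2+e(\E)$, and for a relatively minimal elliptic fibration with a section the canonical class $K_{\E}$ is a $\Q$-multiple of a fiber (Kodaira's canonical bundle formula), so $K_{\E}^2=0$. In characteristic $p>3$ each Kodaira fiber $\E_{\p}$ satisfies $e(\E_{\p})=v_{\p}(\Delta_{\p})$ by the Kodaira classification, equivalently by Ogg's formula $v_{\p}(\Delta_{\p})=f_{\p}+m_{\p}-1$ combined with $e(\E_{\p})=m_{\p}+f_{\p}-1$. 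Summing over $\p$ yields $e(\E)=\deg\D_E=12h_F(E)$, and hence $\deg\mathcal{L}=h_F(E)$.

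The core step is the Kodaira--Spencer construction. Associated with the family $\pi$ there is a morphism of line bundles on $\cC$,
\[ \kappa:\mathcal{L}^{\otimes 2}\longrightarrow\Omega^1_{\cC}(\log S), \]
which measures the infinitesimal variation of the modulus $j(E)$ along $\cC$. The admissibility hypothesis is precisely the statement that the $j$-map $\cC\to\PP^1_{/\F_r}$ is separable, and this is exactly what forces $\kappa$ not to vanish identically. Since any nonzero homomorphism of line bundles on a smooth projective curve forces a comparison of their degrees, we obtain
\[ 2\deg\mathcal{L}\leq\deg\Omega^1_{\cC}(\log S)=2g-2+\deg(S). \]

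Finally, at every $\p\in S$ the local conductor exponent $f_{\p}$ is at least $1$, so $\deg(S)\leq\deg(\n_E)$. Substituting this together with $\deg\mathcal{L}=h_F(E)$ into the previous inequality and dividing by $2$ yields exactly the desired bound. The main obstacle is the non-vanishing of $\kappa$: a purely inseparable variation of moduli in positive characteristic would make $\kappa\equiv 0$, so the admissibility hypothesis is indispensable here. Once this non-vanishing is in hand, the remaining steps are essentially formal consequences of Noether's formula and the positivity of $\Omega^1_{\cC}(\log S)$.
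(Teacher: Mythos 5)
Your argument is correct, but it proceeds by a genuinely different route from the paper's. You work on the minimal regular (Kodaira--N\'eron) model $\pi:\E\to\cC$, translate $h_F(E)$ into $\deg\mathcal{L}$ via Noether's formula together with the local Euler-characteristic identity $e(\E_{\p})=v_{\p}(\Delta_{\p})$ (which holds for $p>3$ since there is no wild ramification), and then conclude from the degree inequality applied to a nonzero Kodaira--Spencer map $\mathcal{L}^{\otimes2}\to\Omega^1_{\cC}(\log S)$ combined with the observation $\deg(S)\leq\deg(\n_E)$. The paper instead stays entirely at the level of the function field: following Hindry--Silverman, it applies the Riemann--Hurwitz formula to the finite separable extension $F/\F_r(j(E))$, relates the ramification indices of the $j$-map over $0$, $1728$, $\infty$ to the local exponents $v_{\p}(\Delta_{\p})$ and $v_{\p}(j(E))$, and then bounds each local contribution by $v_{\p}(\n_E)$ through a case-by-case check against the Kodaira reduction types. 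Both proofs invoke admissibility at the same critical juncture: for you it is exactly what guarantees $\kappa\not\equiv 0$ (a purely inseparable $j$-map would kill the Kodaira--Spencer class), while for the paper it is exactly what makes $F/\F_r(j(E))$ separable so that Riemann--Hurwitz applies. Your route is more conceptual and avoids the explicit fiber-by-fiber computations, but it requires the machinery of elliptic surfaces (existence of the global minimal regular model, relative duality, Noether's and Ogg's formulas, and the logarithmic Kodaira--Spencer construction), whereas the paper's route is elementary, using only Riemann--Hurwitz and Tate's table, at the price of several local case checks. Both are standard in the literature; your approach is essentially Szpiro's treatment of the height inequality, while the paper follows Hindry--Silverman.
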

\begin{proof} We follow the proof of \cite[Theorem 5.1]{hs} closely. For any prime $\p$ of $F$ we denote $j(E)_{\mathfrak p}$ the reduction of $j(E)$ modulo $\p$ (with $j(E)_{\mathfrak p}=\infty$ if $v_{\mathfrak p}(j(E))<0$) and $e(\p)$ the ramification index of $\p$ over $\F_r(j(E))$.

To begin with, by \cite[Proposition 5.1]{r} there are equalities
\begin{equation} \label{degree-ext-eq}
\begin{split}
\bigl[F:\F_r(j(E))\bigr]&=\sum_{j(E)_{\mathfrak p}=0}v_{\mathfrak p}(j(E))\deg(\p)=\sum_{j(E)_{\mathfrak p}=\infty}-v_{\mathfrak p}(j(E))\deg(\p)\\[2mm]
&=\sum_{j(E)_{\mathfrak p}=1728}v_{\mathfrak p}\bigl(j(E)-1728\bigr)\deg(\p)
\end{split}
\end{equation}
(for the last term note that $\F_r(j(E))=\F_r(j(E)-1728)$). Furthermore, since $j(E)$ and $j(E)-1728$ are primes of $\F_r(j(E))$, the following relations hold:
\begin{equation} \label{ram-indices-eq}
\begin{array}{lcl}
j(E)_{\mathfrak p}=0 & \Longrightarrow & e(\p)=v_{\mathfrak p}(j(E)),\\[2mm]
j(E)_{\mathfrak p}=\infty & \Longrightarrow & e(\p)=-v_{\mathfrak p}(j(E)),\\[2mm]
j(E)_{\mathfrak p}=1728 & \Longrightarrow & e(\p)=v_{\mathfrak p}\bigl(j(E)-1728\bigr).
\end{array}
\end{equation}
Now $E$ is admissible by assumption, so we can apply the Riemann-Hurwitz formula (\cite[Theorem 7.16]{r}) to the  finite separable extension $F/\F_r(j(E))$, and since the genus of $\F_r(j(E))$ is $0$ we get the inequality
\begin{equation} \label{rh-eq}
2g-2\geq-2\bigl[F:\F_r(j(E))\bigr]+\sum_{\mathfrak p\in\Sigma_F}\bigl(e(\p)-1\bigr)\deg(\p).
\end{equation}
Our strategy is to estimate the integers $e(\p)$ and then apply \eqref{rh-eq} to get the desired inequality \eqref{height-conj-eq}. More precisely, the $e(\p)$ for the primes $\p$ with $j(E)_{\mathfrak p}\in\{0,\infty,1728\}$ are computed by means of \eqref{ram-indices-eq}, while for the other primes we simply use the inequality $e(\p)\geq1$. Keeping \eqref{degree-ext-eq} and \eqref{ram-indices-eq} in mind, we can write
\begin{equation} \label{first-eq}
\begin{split}
2\bigl[F:\F_r(j(E))\bigr]&=\frac{5}{6}\sum_{j(E)_{\mathfrak p}=\infty}e(\p)\deg(\p)+\frac{2}{3}\sum_{j(E)_{\mathfrak p}=0}e(\p)\deg(\p)\\[2mm]
&\quad+\frac{1}{2} \sum_{j(E)_{\mathfrak p}=1728}e(\p)\deg(\p).
\end{split}
\end{equation}
Let $\Sigma'_F$ be the set of primes of $F$ such that $j(E)_{\mathfrak p}\notin\{0,\infty,1728\}$ and recall the definition \eqref{disc-eq} of the minimal discriminant divisor $\D_E$. Then, after some easy manipulations, formulas  \eqref{rh-eq} and \eqref{first-eq} yield
\begin{equation} \label{second-eq}
\begin{split}
\frac{1}{6}\deg(\D_E)-2g+2&\leq\sum_{j(E)_{\mathfrak p}=\infty}\bigg(\frac{1}{6}v_{\mathfrak p}(\Delta_{\mathfrak p})-\frac{1}{6}e(\p)+1\bigg)\deg(\p)\\[2mm]
&\quad+\sum_{j(E)_{\mathfrak p}=0}\bigg(\frac{1}{6}v_{\mathfrak p}(\Delta_{\mathfrak p})-\frac{1}{3}e(\p)+1\bigg)\deg(\p)\\[2mm]
&\quad+\sum_{j(E)_{\mathfrak p}=1728}\bigg(\frac{1}{6}v_{\mathfrak p}(\Delta_{\mathfrak p})-\frac{1}{2}e(\p)+1\bigg)\deg(\p)\\[2mm]
&\quad+\sum_{\mathfrak p\in\Sigma'_F}\bigg(\frac{1}{6}v_{\mathfrak p}(\Delta_{\mathfrak p})-e(\p)+1\bigg)\deg(\p).
\end{split}
\end{equation}
At this point, we examine all coefficients according to the different reduction types of $E$. These coefficients can be calculated using \cite[Table 4.1]{si3}, and the results are identical to the ones in \cite[Table 1]{hs}: here we give the details in two of the possible cases just to illustrate the methods. As customary, to denote the reduction type of $E$ at a prime $\p$ of $F$ we use Kodaira symbols, as in the two references given above.
\vskip 2mm
\noindent {\bf 1)} $j(E)_{\mathfrak p}=\infty$ (i.e., $v_{\mathfrak p}(j(E))<0$), reduction type $\text{I}_n^\ast$.\\
\newline Here $v_{\mathfrak p}(\Delta_{\mathfrak p})=n+6$, $e(\p)=-v_{\mathfrak p}(j(E))=n$ and $v_{\mathfrak p}(\n_E)=2$. One then has
\[ \frac{1}{6}v_{\mathfrak p}(\Delta_{\mathfrak p})-\frac{1}{6}e(\p)+1=2=v_{\mathfrak p}(\n_E). \]
\vskip 2mm
\noindent {\bf 2)} $j(E)_{\mathfrak p}=1728$ (i.e., $v_{\mathfrak p}(j(E)-1728)>0$), reduction type $\text{III}$.\\
\newline For an equation $y^2=x^3+ax+b$ of $E$ minimal at $\p$ one has
\[ j(E)-1728=-1728\left(\frac{(4^3a^3)}{\Delta}+1\right)=\frac{-1728}{\Delta}(-16\cdot27b^2), \]
so $e(\p)=v_{\mathfrak p}(j(E)-1728)=2v_{\mathfrak p}(b)-v_{\mathfrak p}(\Delta_{\mathfrak p})$. Under our assumption on the reduction, $v_{\mathfrak p}(\Delta_{\mathfrak p})=3$, $e(\p)=v_{\mathfrak p}(j(E)-1728)=2v_{\mathfrak p}(b)-3\geq1$ and $v_{\mathfrak p}(\n_E)=2$. One then has
\[ \frac{1}{6}v_{\mathfrak p}(\Delta_{\mathfrak p})-\frac{1}{2}e(\p)+1\leq1<v_{\mathfrak p}(\n_E). \]
\vskip 2mm
\noindent All other cases can be dealt with in an analogous manner. The crucial point is that the coefficients of $\deg(\p)$ in the right hand side of \eqref{second-eq} are always bounded from above by $v_{\mathfrak p}(\n_E)$. Hence inequality \eqref{second-eq} gives
\[ \frac{1}{6}\deg(\D_E)-2g+2\leq\sum_{\mathfrak p\in\Sigma_F}v_{\mathfrak p}(\n_E)\deg(\p), \]
and finally
\[ h_F(E)\leq\frac{1}{2}\deg(\n_E)+g-1 \]
by definition of the Faltings height of $E$. \end{proof}
This proposition is clearly interesting in its own right, and we refer the reader to \cite[\S 2]{f1} and \cite[\S 1 (b)]{f2} for comments, consequences and a slightly different proof. For the purposes of this paper its importance lies in the central role it will play in the proof of Shafarevich's theorem: in fact, inequality \eqref{height-conj-eq} is the ingredient that allows us to deduce Theorem \ref{shafarevich-thm} from the finiteness property of the Faltings height $h_F$ that was established in Proposition \ref{finiteness-height-pro}.

\subsection{Shafarevich's theorem} \label{shafarevich-subsec}

A well-known theorem due to Shafarevich (\cite[Ch. IX, Theorem 6.1]{si1}) asserts that if $K$ is a number field there are only a finite number of $K$-isomorphism classes of elliptic curves defined over $K$ having good reduction at all primes of $K$ outside a fixed finite subset. We now show that an analogous result holds for elliptic curves over a global function field $F$ of characteristic $p>3$ which are admissible in the sense of Definition \ref{admissible-dfn}. This seems to be well known to experts and is essentially a consequence of the basic properties of Faltings heights recalled in \S \ref{faltings-heights-subsec}; however, we were not able to track down a reasonably self-contained reference in the literature, so for the convenience of the reader we give a detailed proof of this result.

Before turning to Shafarevich's theorem, let us state the following result, which will be used to prove Corollary \ref{F-isogeny-cor} below.
\begin{pro} \label{isogeny-conductor-pro}
Let $E_{/F}$, $E'_{/F}$ be elliptic curves and let $f:E\rightarrow E'$ be an isogeny defined over $F$. Then $E$ and $E'$ have the same conductor over $F$.
\end{pro}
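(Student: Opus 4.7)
The plan is to exploit the fact that the conductor is an invariant of the rational Tate module $V_\ell(E) := T_\ell(E) \otimes_{\Z_\ell} \Q_\ell$ (for any $\ell \neq p$), viewed as a Galois representation, and to show that an isogeny induces an isomorphism on rational Tate modules.

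First I would show that for every prime $\ell \neq p$ the isogeny $f$ induces a $G_F$-equivariant isomorphism $V_\ell(E) \cong V_\ell(E')$. The natural map $f_\ast \colon T_\ell(E) \to T_\ell(E')$ is Galois-equivariant because $f$ is defined over $F$. Let $\hat f \colon E' \to E$ be the dual isogeny and put $n := \deg f$. Then $\hat f \circ f = [n]_E$, so $\hat f_\ast \circ f_\ast$ is multiplication by $n$ on $T_\ell(E)$; since $T_\ell(E) \cong \Z_\ell^2$ is torsion-free this forces $f_\ast$ to be injective. Similarly $f \circ \hat f = [n]_{E'}$ implies that the cokernel of $f_\ast$ is $n$-torsion and hence finite. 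Tensoring with $\Q_\ell$ kills both the (zero) kernel and the finite cokernel, giving the desired $G_F$-equivariant isomorphism $V_\ell(E) \cong V_\ell(E')$.

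Next I would recall that by the definition adopted in the excerpt (following \cite[Ch.~IV, \S10]{si3}), the conductor $\n_E$ is the divisor
\[
\n_E = \sum_{\mathfrak p \in \Sigma_F} f_{\mathfrak p}(E)\cdot \mathfrak p,
\]
where the local exponent $f_{\mathfrak p}(E)$ is computed from the restriction of $V_\ell(E)$ to a decomposition group at $\mathfrak p$, via the Swan and tame contributions; crucially this only depends on $V_\ell(E)$ as a $G_{F_{\mathfrak p}}$-representation (and not on the chosen $\ell \neq p$). Applying this locally at each $\mathfrak p$ and using the isomorphism of the first step, we obtain $f_{\mathfrak p}(E) = f_{\mathfrak p}(E')$ for every $\mathfrak p \in \Sigma_F$, hence $\n_E = \n_{E'}$.

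There is no real obstacle here: the key algebraic input is just the existence of the dual isogeny, and the rest is a formal consequence of the definition of the conductor as an invariant of the rational $\ell$-adic representation. The only minor point worth flagging is that one must take $\ell \neq p$ so that the Tate module has rank two and the conductor is defined, but since $\ell$ can always be chosen this way, the argument goes through uniformly at every prime of $F$.
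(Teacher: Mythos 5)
Your proposal is correct and follows essentially the same route as the paper, which simply observes that the Tate modules of isogenous curves give the same $G_F$-module and that the conductor is an invariant of this module. In fact your write-up is slightly more careful than the paper's one-line proof: the integral Tate modules $T_\ell(E)$ and $T_\ell(E')$ need not be isomorphic as $G_F$-modules when $\ell$ divides $\deg f$, so passing to the rational Tate modules $V_\ell$ (which is all the conductor sees) is exactly the right move.
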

\begin{proof} It is easy to see that the Tate modules $T_\ell(E)$ and $T_\ell(E')$ are isomorphic as $G_F$-modules for $\ell\not=p$, and the claim follows. \end{proof}
Now we prove
\begin{thm}[Shafarevich] \label{shafarevich-thm}
Let $S$ be a finite set of primes of $F$. There are only finitely many $F$-isomorphism classes of admissible elliptic curves $E_{/F}$ having good reduction at all primes of $F$ outside $S$.
\end{thm}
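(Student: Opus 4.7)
The plan is to combine the finiteness property established in Proposition \ref{finiteness-height-pro} with the height conjecture (Proposition \ref{height-conj-pro}). Since admissible curves are non-isotrivial, Proposition \ref{finiteness-height-pro} tells us that up to $F$-isomorphism only finitely many admissible elliptic curves can have Faltings height bounded by a fixed constant. The height conjecture then reduces the task to producing a uniform upper bound on $\deg(\n_E)$, depending only on $S$ (and on $F$), for all admissible $E_{/F}$ with good reduction outside $S$.

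To produce such a uniform bound, I would first note that if $E$ has good reduction outside $S$, then the conductor $\n_E$ is supported on $S$, so
\[ \deg(\n_E)=\sum_{\mathfrak p\in S}v_{\mathfrak p}(\n_E)\deg(\mathfrak p). \]
The key point, and this is precisely where the hypothesis $p>3$ enters crucially (as the convention at the end of the introduction anticipates), is that the local conductor exponent $v_{\mathfrak p}(\n_E)$ is uniformly bounded. Indeed, the Ogg-type formula $v_{\mathfrak p}(\n_E)=\varepsilon+\delta$, with $\varepsilon\in\{0,1,2\}$ the tame part and $\delta$ the wild part, gives $v_{\mathfrak p}(\n_E)\leq 2$ whenever the residue characteristic at $\mathfrak p$ is $>3$, since in that case the wild part $\delta$ vanishes (the inertia action factors through a group of order dividing $24$, which is coprime to the residue characteristic). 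Hence
\[ \deg(\n_E)\leq 2\sum_{\mathfrak p\in S}\deg(\mathfrak p), \]
a bound depending only on $S$.

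Plugging this into the inequality of Proposition \ref{height-conj-pro} yields
\[ h_F(E)\leq\sum_{\mathfrak p\in S}\deg(\mathfrak p)+g-1, \]
so $h_F(E)$ is bounded uniformly across the class of admissible curves with good reduction outside $S$. Proposition \ref{finiteness-height-pro} then finishes the proof, giving the desired finiteness of $F$-isomorphism classes.

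The main obstacle, and the step that genuinely uses $p>3$, is the uniform bound on the local conductor exponents: without it, one could in principle have unbounded wild ramification contributions even over a fixed finite set $S$, and the strategy would collapse. Everything else is a routine combination of results already proved in this section, together with the standard fact that for residue characteristic coprime to the order of the (prime-to-$p$) image of inertia the conductor exponent has no wild part.
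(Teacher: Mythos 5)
Your proof is correct and follows essentially the same route as the paper: bound $\deg(\n_E)$ by $2\sum_{\mathfrak p\in S}\deg(\mathfrak p)$ using $p>3$ to kill wild ramification, feed this into the height conjecture to get a uniform bound on $h_F(E)$, and then invoke Proposition \ref{finiteness-height-pro}. The only difference is that you spell out the Ogg-type formula argument for $v_{\mathfrak p}(\n_E)\leq 2$, which the paper relegates to a footnote.
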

\begin{proof} Let $S=\{\p_1,\dots,\p_n\}$. If an elliptic curve $E_{/F}$ has good reduction outside $S$ then the support of its conductor $\n_E$ is contained in $S$. Since we are assuming that $p>3$, it follows that\footnote{The uniform bound \eqref{cond-deg-eq} does not hold in characteristic $p=2,3$ due to the possible high divisibility of $\n_E$ by places of additive reduction for $E$.}
\begin{equation} \label{cond-deg-eq}
\deg(\n_E)\leq C(S):=2\sum_{j=1}^n\deg(\p_j).
\end{equation}
Note that the constant $C(S)$ is independent of $E$. Now, by Proposition \ref{height-conj-pro} we know that if $E_{/F}$ is admissible then
\begin{equation} \label{height-bound-eq}
h_F(E)\leq\frac{1}{2}\deg(\n_E)+g-1.
\end{equation}
Combining \eqref{height-bound-eq} and \eqref{cond-deg-eq} yields the inequality
\[ h_F(E)\leq\frac{1}{2}C(S)+g-1, \]
and the claim of the theorem follows from Proposition \ref{finiteness-height-pro}. \end{proof}
It seems worthwhile to point out a straightforward consequence of Shafarevich's theorem.
\begin{cor} \label{F-isogeny-cor}
Every $F$-isogeny class of elliptic curves defined over $F$ contains only finitely many $F$-isomorphism classes of admissible elliptic curves.
\end{cor}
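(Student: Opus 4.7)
The plan is to derive the corollary as a direct consequence of Proposition \ref{isogeny-conductor-pro} (isogenous elliptic curves have the same conductor) combined with Shafarevich's theorem (Theorem \ref{shafarevich-thm}). Fix an $F$-isogeny class $\mathcal{I}$ of elliptic curves over $F$; we may assume $\mathcal{I}$ contains at least one admissible curve $E$, otherwise the statement is vacuous.

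By Proposition \ref{isogeny-conductor-pro}, for every $E' \in \mathcal{I}$ we have $\mathfrak{n}_{E'} = \mathfrak{n}_E$. Consequently, setting
\[ S := \bigl\{\mathfrak{p} \in \Sigma_F \mid v_{\mathfrak{p}}(\mathfrak{n}_E) > 0\bigr\}, \]
every elliptic curve in $\mathcal{I}$ has good reduction at all primes outside $S$. Since $\mathfrak{n}_E$ is an effective divisor of $F$, the set $S$ is finite.

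Theorem \ref{shafarevich-thm} then immediately produces only finitely many $F$-isomorphism classes of admissible elliptic curves over $F$ with good reduction outside $S$. The admissible members of $\mathcal{I}$ form a subset of this finite collection, so they comprise only finitely many $F$-isomorphism classes, as claimed.

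There is no genuine obstacle: the corollary is a formal consequence of the two results just cited, and all the real work sits in Proposition \ref{isogeny-conductor-pro} (which reduces to the fact that the conductor depends only on the Galois representation on $T_\ell$, and isogenous curves have isomorphic rational Tate modules) and Theorem \ref{shafarevich-thm} (whose proof in turn rests on the height conjecture and Proposition \ref{finiteness-height-pro}). The only conceptual point worth checking is that admissibility behaves well here, but this is automatic because admissibility is being hypothesized on the curves one counts, not derived from the isogeny.
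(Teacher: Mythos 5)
Your proof is correct and follows exactly the route the paper takes: use Proposition \ref{isogeny-conductor-pro} to see that all curves in the $F$-isogeny class share the same conductor, hence the same finite set $S$ of bad-reduction primes, and then invoke Theorem \ref{shafarevich-thm} to bound the admissible members. The paper's own proof is just a terser version of the same argument.
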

\begin{proof}
By Proposition \ref{isogeny-conductor-pro}, two elliptic curves defined over $F$ which are $F$-isogenous have the same conductor over $F$, and so they have the same set of primes of bad reduction. \end{proof}
The reader is referred to \cite[Ch. IX, Corollary 6.2 and Remark 6.5]{si1} for the counterpart of this result over number fields and for interesting arithmetic consequences of a (still-to-be-found) proof of Shafarevich's theorem over number fields which did not use Siegel's theorem or diophantine approximation techniques.
\begin{rem}
If the word ``admissible'' in Theorem \ref{shafarevich-thm} is replaced by ``non-isotrivial'', the resulting statement is false. For example, let $E_{/F}$ be a non-isotrivial elliptic curve and let $S$ be the set of places of bad reduction for $E$. Moreover, for all $i\geq0$ let $E_i$ be the elliptic curve obtained by applying to $E$ the $i$th iteration of the $r$th-power (relative) Frobenius. Then the family $\{E_i\}_{i\geq0}$ together with the finite set $S$ gives a counterexample to this stronger assertion (cf. Proposition \ref{isogeny-conductor-pro}).
\end{rem}
\begin{rem} \label{shafarevich-rem}
The reader might wonder why we did not mimic, in the proof of Theorem \ref{shafarevich-thm}, the arguments originally given by Tate for elliptic curves over number fields and reproduced, for example, in \cite[Ch. IV, \S 1.4]{se2} and \cite[Ch. IX, Theorem 6.1]{si1}. The reason is simply that they do not work in our function field setting. In fact, the proof by Tate is based on applying Siegel's finiteness theorem for $S$-integral points to a certain auxiliary elliptic curve with $j=0$. Unfortunately, the analogue over global function fields of Siegel's result is valid in full strength only for non-isotrivial elliptic curves (cf. \cite[Lemma 5.1 and Theorem 5.3]{vo}).
\end{rem}

\subsection{Irreducibility results} \label{irreducibility-subsec}

As in \cite[Ch. 17]{la}, the first step towards Igusa's theorem is an irreducibility property for the Tate modules of a non-isotrivial elliptic curve $E$, which we show in Theorem \ref{irreducibility-thm} below. As we shall see, its proof rests upon Shafarevich's theorem.
\begin{lem} \label{end-lem}
Let $E_{/K}$ be an elliptic curve over a field $K$ of positive characteristic $p$. Then $\mathrm{End}(E)=\Z$ if and only if $j(E)\notin\bar{\F}_p\cap K$.
\end{lem}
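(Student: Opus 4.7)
The plan is to prove the two directions separately. Since $j(E)\in K$ tautologically, the condition $j(E)\notin\bar{\F}_p\cap K$ is equivalent to $j(E)\notin\bar{\F}_p$, and following the convention of the paper I take $\mathrm{End}(E)$ to mean $\mathrm{End}_{\bar K}(E)$. Thus the lemma reduces to proving $\mathrm{End}_{\bar K}(E)\supsetneq\Z$ if and only if $j(E)\in\bar{\F}_p$.

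For the direction $(\Leftarrow)$, assume $j(E)\in\bar{\F}_p$, so $j(E)\in\F_q$ for some finite field $\F_q$. I would fix an elliptic curve $E_0$ over $\F_q$ with $j(E_0)=j(E)$ via the explicit Weierstrass forms of \cite[Ch.~III, Proposition~1.4(c)]{si1}. Two elliptic curves with the same $j$-invariant are isomorphic over any algebraically closed field, so $E\cong E_0$ over $\bar K$, giving $\mathrm{End}_{\bar K}(E)\supseteq\mathrm{End}_{\bar{\F}_p}(E_0)$. By Deuring's structure theorem, the endomorphism ring of an elliptic curve over $\bar{\F}_p$ is either an order in an imaginary quadratic field (if $E_0$ is ordinary) or an order in a quaternion algebra ramified exactly at $p$ and $\infty$ (if $E_0$ is supersingular); in either case it strictly contains $\Z$, and hence $\mathrm{End}(E)\supsetneq\Z$.

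For $(\Rightarrow)$, assume $\mathrm{End}_{\bar K}(E)\supsetneq\Z$. I would pick a non-scalar $\alpha\in\mathrm{End}_{\bar K}(E)$ whose degree $\ell:=\deg(\alpha)$ is a rational prime different from $p$. This is possible because the norm form on $\mathrm{End}_{\bar K}(E)\otimes\Q$ is a positive-definite binary or quaternary quadratic form over $\Q$, and such a form represents infinitely many rational primes. Since $\ell\neq p$, the kernel $C:=\ker(\alpha)$ is a cyclic subgroup of $E[\ell]\cong(\Z/\ell\Z)^2$ of order $\ell$, and the pair $(E,C)$ defines a non-cuspidal point on the modular curve $Y_0(\ell)$. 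Its image under the $j$-line map $(E,C)\mapsto\bigl(j(E),j(E/C)\bigr)$ satisfies the modular equation $\Phi_\ell(j(E),j(E/C))=0$, where $\Phi_\ell\in\Z[X,Y]$ is the classical $\ell$-th modular polynomial; since $\alpha$ induces an isomorphism $E/C\xrightarrow{\sim}E$, we have $j(E/C)=j(E)$ and hence $\Phi_\ell(j(E),j(E))=0$.

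To finish, I need the reduction $\Phi_\ell(X,X)\bmod p\in\F_p[X]$ to be non-zero, which would place $j(E)$ in $\bar{\F}_p$. That $\Phi_\ell(X,X)$ itself is non-zero in $\Z[X]$ follows from Kronecker's congruence $\Phi_\ell(X,Y)\equiv(X-Y^\ell)(X^\ell-Y)\pmod\ell$, which yields $\Phi_\ell(X,X)\equiv-(X^\ell-X)^2\pmod\ell$. The main obstacle is controlling the reduction mod $p$; I would address this by invoking the classical factorization of $\Phi_\ell(X,X)$ (up to sign) as a product of monic Hilbert class polynomials, forcing the leading coefficient to be $\pm 1$ and hence retaining the degree under reduction mod any prime. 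A more elementary alternative splits into cases: for supersingular $E$ one has $j(E)\in\F_{p^2}$ by a standard Hasse-invariant argument, while for ordinary $E$ the Serre-Tate canonical lift realises $E$ as the reduction of a CM elliptic curve in characteristic zero whose $j$-invariant is an algebraic integer, yielding $j(E)\in\bar{\F}_p$.
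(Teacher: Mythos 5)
Your proof is correct, and it takes a genuinely different route from the paper. The paper proves Lemma~\ref{end-lem} by a one-line citation: it is ``part of a classical result of Deuring'' and the reader is referred to \cite[p.~217]{mu}. You instead essentially reprove the relevant portion of Deuring's theorem. For $(\Leftarrow)$ you descend $E$ to a model over a finite field and quote Deuring's structure theorem for endomorphism rings over $\bar{\F}_p$, which is the same kind of black box the paper invokes. Your argument for $(\Rightarrow)$ is the substantive new content: producing a non-scalar endomorphism of prime degree $\ell\neq p$ (via the positive-definite norm form representing infinitely many primes — true, though not elementary: it needs Chebotarev or the class-field theory of ring class fields in the binary case, and local-global results for quaternary forms in the quaternion case), then feeding $j(E)$ into $\Phi_\ell(X,X)=0$ and using the classical factorization of $\Phi_\ell(X,X)$ as $\pm\prod H_D(X)^{c(D)}$ into monic Hilbert class polynomials to guarantee that its reduction mod $p$ is non-zero, so $j(E)$ is algebraic over $\F_p$. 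That factorization is exactly the right tool, since Kronecker's congruence alone controls $\Phi_\ell(X,X)$ mod $\ell$, not mod $p$, and you were right to flag this as the real issue. Your alternative closing argument (supersingular $\Rightarrow j\in\F_{p^2}$; ordinary with extra endomorphisms $\Rightarrow$ Serre--Tate canonical lift has CM, hence $j$-invariant integral over $\Z$) is also valid after first replacing $K$ by $\bar K$ so that the base is perfect. The trade-off: the paper's route is maximally economical but entirely opaque, whereas yours is self-contained modulo several named classical facts (Deuring's structure theorem, prime values of definite norm forms, the Hilbert-class-polynomial factorization of $\Phi_\ell(X,X)$ or Serre--Tate), each of which is on the same order of depth as the result being proved.
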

Here $\F_p$ is the finite field with $p$ elements.
\begin{proof} This is part of a classical result of Deuring, a complete statement and a proof of which can be found in \cite[p. 217]{mu}. \end{proof}
As before, let $F$ be our global function field of characteristic $p>3$. In the following let $E_{/F}$ be a non-isotrivial elliptic curve. Write $V_\ell(E):=T_\ell(E)\otimes_{\mathbb Z_\ell}\Q_\ell$ for all prime numbers $\ell\not=p$.
\begin{thm} \label{irreducibility-thm}
Retain the above notation. Then:
\begin{itemize}
\item[\emph{i)}] the $G_F$-module $E[\ell]$ is irreducible for almost all primes $\ell\not=p$;
\item[\emph{ii)}] the $G_F$-module $V_\ell(E)$ is irreducible for all primes $\ell\not=p$.
\end{itemize}
\end{thm}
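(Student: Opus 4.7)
The plan is to derive a contradiction in each case from Shafarevich's theorem (Theorem \ref{shafarevich-thm}) together with the fact that $\mathrm{End}(E) = \Z$, which follows from Lemma \ref{end-lem} since $E$ is non-isotrivial. The first step is a reduction to the admissible case. Since $j(E) \notin \bar{\F}_r$ we have $j(E) \notin \F_r$, and as $\bigcap_{n \geq 0} F^{p^n} = \F_r$ (any element of the intersection has $p^n$-divisible principal divisor for every $n$, hence trivial divisor), there exists $n \geq 0$ with $j(E) \in F^{p^n} \setminus F^{p^{n+1}}$. Writing $j(E) = j_0^{p^n}$ gives $j_0 \in F \setminus F^p$, so any elliptic curve $E_0/F$ with $j(E_0) = j_0$ is admissible. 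Because $p > 3$ is coprime to each of $2, 4, 6$, the $p^n$-th power map is a bijection on the twist group $F^\times/(F^\times)^N$ of $E_0$ (with $N \in \{2,4,6\}$), so for a suitable twist the $n$-fold iterated Frobenius provides an $F$-rational isomorphism $E_0^{(p^n)} \cong E$; the resulting isogeny $E_0 \to E$ has $p$-power degree and so induces $G_F$-equivariant isomorphisms on $\ell^m$-torsion for every $\ell \neq p$ and $m \geq 1$. We may therefore replace $E$ by the admissible $E_0$.

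For part (i), suppose $E[\ell]$ is reducible for infinitely many $\ell \neq p$, and for each such $\ell$ choose a $G_F$-stable line $C_\ell \subset E[\ell]$. The quotient $E_\ell := E/C_\ell$ has the conductor of $E$ by Proposition \ref{isogeny-conductor-pro} and is admissible by Proposition \ref{admissibility-pro}. Shafarevich's theorem then forces a single elliptic curve $E'/F$ to be $F$-isomorphic to $E_{\ell_i}$ for an infinite sequence of distinct primes $\ell_0 < \ell_1 < \cdots$; composing the quotient maps with these isomorphisms yields isogenies $\phi_i \colon E \to E'$ of pairwise distinct prime degrees $\ell_i$. Using $\mathrm{End}(E) = \Z$ we get $\hat\phi_0 \circ \phi_i = [n_i]$ for some $n_i \in \Z$, whence $n_i^2 = \ell_0 \ell_i$ --- impossible, since a product of two distinct primes is never a perfect square.

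For part (ii), fix $\ell \neq p$ and suppose $V_\ell(E)$ contains a $G_F$-stable line $W$. Then $T := W \cap T_\ell(E)$ is a saturated rank-one $G_F$-stable $\Z_\ell$-sublattice of $T_\ell(E)$, and its image $C_n$ in $E[\ell^n] = T_\ell(E)/\ell^n T_\ell(E)$ is a cyclic $G_F$-stable subgroup of order $\ell^n$. The admissible quotients $E_n := E/C_n$ share the conductor of $E$, so Shafarevich's theorem again produces an $E'/F$ which is $F$-isomorphic to $E_{n_k}$ for an unbounded sequence $n_0 < n_1 < \cdots$; this gives isogenies $\phi_k \colon E \to E'$ of degree $\ell^{n_k}$ with cyclic kernel $C_{n_k}$, and $\mathrm{End}(E) = \Z$ forces $\hat\phi_0 \circ \phi_k = [m_k]$ with $m_k^2 = \ell^{n_0 + n_k}$. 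Hence $n_0 + n_k$ is even and $m_k = \pm \ell^{(n_0+n_k)/2}$, and the cyclic group $C_{n_k}$ of order $\ell^{n_k}$ must lie in $\ker([m_k]) = E[\ell^{(n_0+n_k)/2}]$, forcing $n_k \leq n_0$ and contradicting $n_k \to \infty$.

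The main obstacle is the preliminary descent to an admissible model: Shafarevich's theorem as proved above is unavailable for general non-isotrivial $E$, and the Frobenius-iteration counterexamples recalled after its statement show that the naive generalisation fails. Once that step is settled, the remaining arithmetic is a purely formal degree-juggling inside $\mathrm{End}(E) = \Z$, entirely parallel to the classical proof over number fields.
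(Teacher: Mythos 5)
Your proof is correct and follows the same overall strategy as the paper's: reduce to an admissible curve, then combine Shafarevich's theorem with $\mathrm{End}(E)=\Z$ (via Lemma \ref{end-lem}) to derive a contradiction from degree considerations. The one genuine point of departure is the reduction step. The paper picks the maximal $m$ with $j(E)\in F^{p^m}$, passes from $F$ to the subfield $F_m:=F^{p^m}$, observes that $\gal{F^s}{F}=\gal{F^s_m}{F_m}$ because $F/F_m$ is purely inseparable, and replaces $E$ by an admissible curve $E'$ over $F_m$ (it also allows a further harmless finite separable extension $L$). You instead stay over $F$ and manufacture an admissible curve $E_0/F$ together with an explicit $F$-rational isogeny $E_0\to E$ of degree $p^n$ by iterating the relative Frobenius and adjusting by a quadratic twist; since $p$ is odd and $j(E)\neq 0,1728$ (these are constants), twisting is governed by $F^\times/(F^\times)^2$ and the $p^n$-th power map is bijective there, so such a twist exists. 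The resulting prime-to-$\ell$ isogeny gives the required $G_F$-equivariant identification of $\ell$-power torsion. Both routes are sound; yours is arguably more concrete and avoids changing the base field, while the paper's is a one-line appeal to inseparability of $F/F_m$. For parts i) and ii) the degree arguments are mathematically identical to the paper's, only reorganized: you invoke Shafarevich first and then show two of the hypothetically $F$-isomorphic quotients lead to an impossible identity $n^2=\ell_0\ell_i$ (or $C_{n_k}\subset E[\ell^{(n_0+n_k)/2}]$), whereas the paper establishes pairwise non-isomorphism of the quotient curves up front before invoking Shafarevich. One small stylistic remark: your phrase ``the $n$-fold iterated Frobenius provides an $F$-rational isomorphism $E_0^{(p^n)}\cong E$'' conflates two things --- Frobenius gives the isogeny $E_0\to E_0^{(p^n)}$, and the choice of twist gives the $F$-isomorphism $E_0^{(p^n)}\cong E$ --- but your subsequent sentence makes the intended composite isogeny clear.
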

\begin{proof} Since $E$ is non-isotrivial, there exists an integer $m\geq0$ such that
\[ j(E)\in F^{p^m},\qquad j(E)\notin F^{p^{m+1}}. \]
In particular, $E$ is isomorphic over a finite, separable extension $L$ of $F$ to (the base change to $F$ of) an elliptic curve $E'$ which is defined and admissible over $F_m:=F^{p^m}$. Now notice that it clearly suffices to show that \emph{i)} and \emph{ii)} hold when $G_F$ is replaced by the smaller absolute Galois group $G_L$ of $L$. On the other hand, the group of automorphisms of a purely inseparable extension is trivial, hence there are natural identifications
\[ \gal{F^s}{F}=\text{Aut}(\bar F/F)=\text{Aut}(\bar F/F_m)=\gal{F_m^s}{F_m}. \]
Thus, up to replacing $F$ by $F_m$ and $E$ by $E'$, in order to prove the theorem it is not restrictive to assume that $E_{/F}$ is admissible, which we do.

\emph{i)} Suppose that $E[\ell]$ is reducible for infinitely many primes $\ell\not=p$, and for any such prime let $H_\ell\subset E[\ell]$ be a nonzero $G_F$-invariant proper subspace. It follows that the elliptic curve $E_\ell:=E/H_\ell$ can be defined over $F$; moreover, the natural (cyclic) isogeny $\pi_\ell:E\twoheadrightarrow E_\ell$ is defined over $F$ and has degree $\ell$. Now we claim that $E_\ell$ and $E_{\ell'}$ are not isomorphic (over $\bar{F}$) if $\ell$ and $\ell'$ are distinct primes (which is equivalent to saying that $j(E_\ell)\not=j(E_{\ell'})$ if $\ell\not=\ell'$). In fact, suppose that
\[ \delta:E_\ell\overset{\cong}{\longrightarrow}E_{\ell'} \]
is an isomorphism, and consider the map $\psi:=\hat{\pi}_{\ell'}\circ\delta\circ\pi_\ell$ where $\hat{\pi}_{\ell'}$ is the dual isogeny to $\pi_{\ell'}$. Since $\text{End}(E)=\Z$ by Lemma \ref{end-lem}, there exists an integer $n$ such that
\begin{equation} \label{psi-eq}
\psi=[n]
\end{equation}
as endomorphisms of $E$. Hence, passing to the degrees on both sides of \eqref{psi-eq}, we get the equality $\ell\ell'=n^2$, which is impossible because $\ell\not=\ell'$. It follows that the set
\[ \E:=\{E_\ell\mid\text{$\ell\not=p$ a prime such that $E[\ell]$ is $G_F$-reducible}\} \]
consists of infinitely many elliptic curves which are defined over $F$, are isogenous to $E$ and are pairwise not $F$-isomorphic. Furthermore, since $E$ is admissible and for every $\ell\not=p$ as above the isogeny $\pi_\ell$ has degree $\ell$, Proposition \ref{admissibility-pro} ensures that all of them are admissible. But every curve in $\E$, being isogenous \emph{over} $F$ to $E$, has good reduction outside the support of the conductor of $E$ (which consists of finitely many primes of $F$), and this contradicts Theorem \ref{shafarevich-thm}.

\emph{ii)} If $X\subset V_\ell(E)$ is a $G_F$-invariant, one-dimensional $\Q_\ell$-vector subspace then $X\cap T_\ell(E)$ is a $G_F$-invariant submodule of $T_\ell(E)$ which is free of rank one over $\Z_\ell$, so the irreducibility result for $V_\ell(E)$ is proved once we show that $T_\ell(E)$ is simple as a $G_F$-module. Thus suppose that this is not the case, and let $W\subset T_\ell(E)$ be a nonzero proper $\Z_\ell$-submodule which is $G_F$-invariant. Then $W$ is free of rank one over $\Z_\ell$, and by the linearity of the $G_F$-action we can assume that it is a direct summand of $T_\ell(E)$. For every $n\geq1$ let $\lambda_n:T_\ell(E)\twoheadrightarrow E[\ell^n]$ be the canonical projection, and set $W_n:=\lambda_n(W)$. It follows that $W_n$ is cyclic of order $\ell^n$ and $G_F$-invariant, and the elliptic curve $E_n:=E/W_n$ is defined over $F$ and isogenous to $E$ over $F$. Moreover, the natural (cyclic) isogeny $E\twoheadrightarrow E_n$ has degree $\ell^n$. We contend that $E_n$ and $E_m$ are non-isomorphic if $n<m$, which can be seen as follows. There is an obvious isogeny $\pi_{n,m}:E_n\twoheadrightarrow E_m$ which is defined over $F$, is separable and has a cyclic kernel (of order $\ell^{m-n}$). Suppose now that
\[ \delta:E_m\overset{\cong}{\longrightarrow}E_n \]
is an isomorphism, and consider the separable map $\psi:=\delta\circ\pi_{n,m}$. Since $E$ is admissible, the curve $E_n$ is admissibile (so, in particular, non-isotrivial) as well by Proposition \ref{admissibility-pro}. Now we know by Lemma \ref{end-lem} that $\text{End}(E_n)=\Z$, hence
\[ \psi=[t] \]
for a certain integer $t$ not divisible by $p$. But $\psi$ has a cyclic kernel (equal to the kernel of $\pi_{n,m}$), while the kernel of $[t]$, being isomorphic to $(\Z/t\Z)^2$, is not cyclic. This proves our claim, and we conclude as before that the set
\[ \E:=\{E_n\mid n\geq1\} \]
consists of infinitely many admissible elliptic curves defined over $F$ and belonging to different $F$-isomorphism classes. Since all the curves in $\E$ have good reduction outside the support of the conductor of $E$, this contradicts once again Theorem \ref{shafarevich-thm}. \end{proof}

\section{Proof of Theorem \ref{igusa-thm}} \label{proof-sec}

Now that we have collected some of the algebraic results that will be used (most notably Theorem \ref{irreducibility-thm}), we can prove Igusa's theorem. This will be done in \S \ref{conclusion-subsec}, and before that we need to gather a handful of geometric features that will pave our way towards Theorem \ref{igusa-thm}.

\subsection{Consequences of the Weil pairing} \label{weil-subsec}

In this $\S$ the curve $E_{/F}$ is an arbitrary elliptic curve, possibly isotrivial. Let $n\geq2$ be an integer not divisible by $p$; recall from \cite[Ch. III, \S 8]{si1} that there is a bilinear, alternating, non-degenerate pairing (called the \emph{Weil pairing})
\[ e_n: E[n]\times E[n] \longrightarrow \boldsymbol\mu_n \]
such that
\begin{equation} \label{weil-galois-eq}
e_n(P^\sigma,Q^\sigma)=e_n(P,Q)^\sigma
\end{equation}
for all $P,Q\in E[n]$ and $\sigma\in\gal{\bar F}{F}$. As a consequence of the properties of $e_n$, it turns out (\cite[Ch. III, Corollary 8.1.1]{si1}) that $\boldsymbol\mu_n\subset F(E[n])$.\\

\noindent\emph{In the rest of the paper we will regard elements of $\gal{F(E[n])}{F}$ as $2\times2$ invertible matrices via the map $\bar\rho_{E,n}$.}
\begin{lem} \label{weil-equality-lem}
The equality
\[ e_n(Q_1,Q_2)^\sigma=e_n(Q_1,Q_2)^{\det(\sigma)} \]
holds for all $Q_1,Q_2\in E[n]$ and all $\sigma\in\gal{F(E[n])}{F}$.
\end{lem}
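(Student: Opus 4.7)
The plan is the standard bilinear-algebra computation on a fixed basis, exploiting only the formal properties of the Weil pairing recalled just before the lemma (bilinearity, alternation, Galois-equivariance) together with the non-degeneracy that was used to conclude $\boldsymbol{\mu}_n\subset F(E[n])$.

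First I would fix a $\Z/n\Z$-basis $\{P_1,P_2\}$ of $E[n]$ (the very basis with respect to which $\bar{\rho}_{E,n}$ is being computed as a matrix) and set $\zeta:=e_n(P_1,P_2)$. Non-degeneracy of $e_n$ forces $\zeta$ to be a \emph{primitive} $n$-th root of unity, so every element of $\boldsymbol{\mu}_n$, and in particular $e_n(Q_1,Q_2)$ for any $Q_1,Q_2\in E[n]$, is a power of $\zeta$. Consequently it suffices to prove the identity for the single pair $(P_1,P_2)$: once we know $\zeta^\sigma=\zeta^{\det(\sigma)}$, writing $Q_1=x_1P_1+y_1P_2$ and $Q_2=x_2P_1+y_2P_2$ yields
\[
e_n(Q_1,Q_2)=\zeta^{x_1y_2-x_2y_1},
\]
and applying $\sigma$ to both sides (via the $\sigma$-exponent $\det(\sigma)$ on $\zeta$) gives the desired formula.

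For the computation on the basis, write $\sigma=\begin{pmatrix}a&b\\c&d\end{pmatrix}$ so that $P_1^\sigma=aP_1+cP_2$ and $P_2^\sigma=bP_1+dP_2$. Applying the Galois equivariance \eqref{weil-galois-eq} and then expanding by bilinearity,
\[
\zeta^\sigma=e_n(P_1^\sigma,P_2^\sigma)=e_n(P_1,P_1)^{ab}\,e_n(P_1,P_2)^{ad}\,e_n(P_2,P_1)^{cb}\,e_n(P_2,P_2)^{cd}.
\]
The alternating property gives $e_n(P_i,P_i)=1$ and $e_n(P_2,P_1)=\zeta^{-1}$, so the right-hand side collapses to $\zeta^{ad-bc}=\zeta^{\det(\sigma)}$, finishing the argument.

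There is really no obstacle here: the lemma is a pure formal consequence of the three fundamental properties of $e_n$, and the only point requiring any care is ensuring that $\zeta$ is a primitive $n$-th root of unity so that exponentiation by $\det(\sigma)\in(\Z/n\Z)^\times$ is well-defined and compatible with arbitrary powers of $\zeta$; this is guaranteed by the non-degeneracy of the Weil pairing on the basis $\{P_1,P_2\}$.
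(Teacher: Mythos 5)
Your proposal is correct and takes essentially the same route as the paper: fix a basis $\{P_1,P_2\}$, use Galois equivariance plus bilinearity and the alternating property to get $e_n(P_1,P_2)^\sigma=e_n(P_1,P_2)^{\det(\sigma)}$, and then extend to arbitrary $Q_1,Q_2$ by bilinearity. You merely spell out the intermediate product expansion and the primitivity of $\zeta$ that the paper leaves implicit.
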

\begin{proof} Let $P_1,P_2$ be a $\Z/n\Z$-basis for $E[n]$ and let $\bigl(\begin{smallmatrix}a&b\\c&d\end{smallmatrix}\bigr)$ be the matrix of $\sigma$ for this basis. Then, by the properties of the Weil pairing (in particular, the Galois equivariance of $e_n$ expressed by \eqref{weil-galois-eq}), one gets:
\begin{align*}
e_n(P_1,P_2)^\sigma&=e_n(P^\sigma_1,P^\sigma_2)\\
                   &=e_n(aP_1+cP_2,bP_1+dP_2)\\
                   &=e_n(P_1,P_2)^{ad-bc}\\
                   &=e_n(P_1,P_2)^{\det(\sigma)},
\end{align*}
and the equality for arbitrary points $Q_1,Q_2$ in $E[n]$ follows by bilinearity. \end{proof}
The following important results are consequences of the above lemma.
\begin{pro} \label{inclusion-gamma-pro}
For all $n\geq1$ prime to $p$ there is an inclusion $\gal{F(E[n])}{F}\subset\Gamma_n$.
\end{pro}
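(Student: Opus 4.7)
The plan is to exploit the Weil pairing one more time: Lemma \ref{weil-equality-lem} already tells us that any $\sigma\in\gal{F(E[n])}{F}$ acts on the cyclotomic subfield $\F_r(\boldsymbol\mu_n)\subset F(E[n])$ by raising elements to the power $\det(\sigma)$. So proving the inclusion $\gal{F(E[n])}{F}\subset\Gamma_n$ amounts to checking that the image of the cyclotomic character
\[ \chi_n:\gal{F(\boldsymbol\mu_n)}{F}\longrightarrow(\Z/n\Z)^\times \]
lands inside $H_n=\langle r\rangle$, because every $\sigma\in\gal{F(E[n])}{F}$ factors through $\gal{F(\boldsymbol\mu_n)}{F}$ after restriction to $\boldsymbol\mu_n$.

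First I would recall that by \cite[Ch. III, Corollary 8.1.1]{si1} we have $\boldsymbol\mu_n\subset F(E[n])$, so the restriction map $\gal{F(E[n])}{F}\to\gal{F(\boldsymbol\mu_n)}{F}$ is defined and surjective. Next, since $\F_r$ is the constant field of $F$ (i.e., $\F_r$ is algebraically closed in $F$), the extension $\F_r(\boldsymbol\mu_n)/\F_r$, being a finite constant-field extension, is linearly disjoint from $F/\F_r$. Concretely $F\cap\F_r(\boldsymbol\mu_n)=\F_r$, which yields
\[ [F(\boldsymbol\mu_n):F]=[\F_r(\boldsymbol\mu_n):\F_r] \]
and an isomorphism of Galois groups
\[ \gal{F(\boldsymbol\mu_n)}{F}\xrightarrow{\;\sim\;}\gal{\F_r(\boldsymbol\mu_n)}{\F_r}, \]
obtained by restriction to $\F_r(\boldsymbol\mu_n)$. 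The right-hand group is cyclic, generated by the $r$th-power Frobenius, whose image in $(\Z/n\Z)^\times$ is exactly $r$. Hence $\mathrm{Im}(\chi_n)=H_n$.

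Combining the two inputs, for any $\sigma\in\gal{F(E[n])}{F}$ we have on the one hand $\sigma|_{\boldsymbol\mu_n}=\chi_n(\sigma|_{F(\boldsymbol\mu_n)})\in H_n$, and on the other hand, by Lemma \ref{weil-equality-lem} applied to a pair $(P_1,P_2)$ whose Weil pairing $e_n(P_1,P_2)$ is a primitive $n$th root of unity (such a pair exists because $e_n$ is non-degenerate and alternating, and therefore surjects onto $\boldsymbol\mu_n$), the element $\sigma$ acts on this chosen primitive root by the exponent $\det(\sigma)$. Matching the two exponents in $(\Z/n\Z)^\times$ gives $\det(\sigma)\in H_n$, which by the defining short exact sequence \eqref{gamma-n-eq} means precisely $\sigma\in\Gamma_n$.

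The only mildly delicate step is the linear disjointness $F\cap\F_r(\boldsymbol\mu_n)=\F_r$: it is this that forces the cyclotomic character to take values in $H_n$ rather than in all of $(\Z/n\Z)^\times$, and it is the feature that distinguishes the function field setting (where $\gal{F^s}{F}$ surjects onto $\gal{\bar\F_r}{\F_r}$) from the number field setting (where the full cyclotomic character appears). Everything else is a direct unravelling of definitions, so I do not anticipate any serious obstacle.
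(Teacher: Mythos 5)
Your proof is correct and follows essentially the same route as the paper's: the key inputs in both cases are Lemma~\ref{weil-equality-lem} (Galois acts on Weil pairing values by the $\det$-power) and the identification $\gal{\F_r(\boldsymbol\mu_n)}{\F_r}=H_n$, combined to conclude that $\det(\sigma)\in H_n$.

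One small remark on emphasis. You flag the linear disjointness $F\cap\F_r(\boldsymbol\mu_n)=\F_r$ as \emph{the} step that forces $\det(\sigma)$ into $H_n$, but in fact the containment $\mathrm{Im}(\chi_n)\subset H_n$ needs only the much softer observation that any $\sigma\in\gal{F(E[n])}{F}$ fixes $\F_r\subset F$ and hence restricts to an element of $\gal{\F_r(\boldsymbol\mu_n)}{\F_r}=H_n$. That is all the paper uses (it simply picks out the $r^s$-exponent). Linear disjointness gives the stronger statement $\mathrm{Im}(\chi_n)=H_n$ (surjectivity), which is the content of the paper's later Lemma~\ref{lin-disj-lem} and is needed there for the short exact sequence in Proposition~\ref{hor-control-pro}, but is more than the present proposition requires. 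So your argument is valid and proves a bit more than necessary; the paper's version is leaner.
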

\begin{proof} The case $n=1$ being trivial, we assume $n\geq2$. Let $\sigma\in\gal{F(E[n])}{F}$; by definition of $\Gamma_n$, we have to show that $\det(\sigma)$ belongs to the subgroup $H_n$ of $(\Z/n\Z)^\times$ generated by $r$.

Choose $P,Q\in E[n]$ such that $\zeta_n:=e_n(P,Q)$ is a primitive $n$th root of unity: this can be done by the non-degeneracy of the Weil pairing. Then
\begin{equation} \label{sigma-action-eq1}
\zeta^\sigma_n=\zeta^{\det(\sigma)}_n
\end{equation}
by Lemma \ref{weil-equality-lem}. On the other hand, there is a natural identification $\gal{\F_r(\boldsymbol\mu_n)}{\F_r}=H_n$, so there exists a positive integer $s$ such that
\begin{equation} \label{sigma-action-eq2}
\zeta^\sigma_n=\zeta^{r^s}_n.
\end{equation}
The claim follows immediately by comparing \eqref{sigma-action-eq1} and \eqref{sigma-action-eq2}. \end{proof}
As in the introduction, denote
\[ \rho_E: G_F \longrightarrow \prod_{\ell\not=p}GL_2(\Z_\ell) \]
the Galois representation attached to $E$ and write $\hat\Gamma$ for the profinite group defined in \eqref{gamma-hat-eq}.
\begin{cor} \label{inclusion-gamma-cor}
There is an inclusion $\rho_E(G_F)\subset\hat\Gamma$.
\end{cor}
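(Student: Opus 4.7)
The plan is to deduce this corollary by passing to the inverse limit in Proposition \ref{inclusion-gamma-pro}. Concretely, I would start by fixing $\sigma \in G_F$ and noting that $\rho_E(\sigma)$ is, by construction, the element of $\prod_{\ell\neq p} GL_2(\Z_\ell)$ whose reduction modulo $\ell^n$ for each prime $\ell\neq p$ and each $n\geq 1$ coincides with the image of $\sigma$ under $\bar\rho_{E,\ell^n}$. More generally, combining these reductions via the Chinese Remainder Theorem, for every positive integer $n$ prime to $p$ the image of $\rho_E(\sigma)$ under the natural projection
\[ \prod_{\ell\neq p} GL_2(\Z_\ell) \longrightarrow GL_2(\Z/n\Z) \]
is exactly $\bar\rho_{E,n}(\sigma) \in \gal{F(E[n])}{F}$.

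Next, I would invoke Proposition \ref{inclusion-gamma-pro} to conclude that this image lies in $\Gamma_n$ for every $n$ prime to $p$. Since $\hat\Gamma$ was defined precisely as the inverse limit of the groups $\Gamma_n$ (see the short exact sequence \eqref{gamma-hat-eq}, obtained by passing to the limit in \eqref{gamma-n-eq}), the compatibility of the reductions $\Gamma_{nm} \twoheadrightarrow \Gamma_n$ together with the containment $\bar\rho_{E,n}(\sigma) \in \Gamma_n$ for all such $n$ shows that $\rho_E(\sigma)$ lies in $\varprojlim_{(n,p)=1} \Gamma_n = \hat\Gamma$.

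Since this holds for arbitrary $\sigma \in G_F$, the inclusion $\rho_E(G_F) \subset \hat\Gamma$ follows. There is no real obstacle here: the statement is a purely formal consequence of Proposition \ref{inclusion-gamma-pro} once one unwinds the definition of $\hat\Gamma$ as a profinite completion, so the only ``work'' lies in checking the compatibility of the finite-level inclusions with the transition maps, which is automatic from the functoriality of the determinant and of the identification $H_n \cong \gal{\F_r(\boldsymbol\mu_n)}{\F_r}$.
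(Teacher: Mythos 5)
Your proof is correct and is exactly the paper's argument (the paper's proof is the one-liner ``Pass to the projective limit in the inclusions of Proposition \ref{inclusion-gamma-pro}''); you have simply unwound that sentence, checking the compatibility with the transition maps explicitly.
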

\begin{proof} Pass to the projective limit in the inclusions of Proposition \ref{inclusion-gamma-pro}. \end{proof}

\subsection{Reduction to the split multiplicative case} \label{split-multiplicative-subsec}

From here on $E_{/F}$ is non-isotrivial. We explain why it is not restrictive, for the purposes of our paper, to assume that $E_{/F}$ has split multiplicative reduction at $\infty$.
\begin{pro} \label{split-reduction-pro}
There exist a finite separable extension $F'$ of $F$ and a prime $\infty'$ of $F'$ such that the base-changed elliptic curve $E_{/F'}:=E\times_F F'$ has split multiplicative reduction at $\infty'$.
\end{pro}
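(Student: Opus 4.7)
The plan is first to locate a place of $F$ at which $E$ has potential multiplicative reduction, and then to pass to a finite separable extension converting the reduction type there into split multiplicative. The whole argument is driven by non-isotriviality, which forces the $j$-invariant to have at least one pole on $\mathcal{C}$. Concretely, since $E_{/F}$ is non-isotrivial we have $j(E)\notin\bar\F_r$ (as recalled just before Theorem~\ref{igusa-thm}); in particular $j(E)$ is a non-constant rational function on $\mathcal{C}$, so it must admit at least one pole $\mathfrak{q}\in\Sigma_F$, i.e.\ a place with $v_\mathfrak{q}(j(E))<0$. By \cite[Ch. VII, Proposition 5.5]{si1}, $E$ has potential multiplicative reduction at any such $\mathfrak{q}$.

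For the local step I would invoke Tate's theory of analytic uniformization of elliptic curves with non-integral $j$-invariant (to be reviewed in \S\ref{tate-subsec}; see also \cite[Ch. V]{si3}). Working over the completion $F_\mathfrak{q}$, this theory produces a unique, possibly trivial, quadratic character $\chi\colon G_{F_\mathfrak{q}}\to\{\pm1\}$ whose kernel cuts out an at-most-quadratic extension $L/F_\mathfrak{q}$, separable because $p>3$, with the property that $E_{/L}$ has split multiplicative reduction; equivalently, the quadratic twist $E^\chi$ already has split multiplicative reduction over $F_\mathfrak{q}$, and $L$ is the fixed field of $\ker\chi$.

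It then remains to globalize. By weak approximation in $F$ one can find $\alpha\in F^\times$ whose image in $F_\mathfrak{q}^\times/(F_\mathfrak{q}^\times)^2$ represents the class defining $L$; setting $F':=F(\sqrt\alpha)$ yields a finite separable extension of $F$ whose completion at any prime $\infty'$ of $F'$ lying above $\mathfrak{q}$ contains $L$. Consequently, $E_{/F'}$ has split multiplicative reduction at $\infty'$, as required. The only point that requires some care is the separability of the various quadratic extensions and the passage from local to global data, but these are both routine under our hypothesis $p>3$, which makes every extension of degree at most $2$ automatically separable.
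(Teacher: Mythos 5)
Your proof is correct, and while it lands in the same place as the paper's, the route is genuinely different in its local step. Both arguments begin identically: non-isotriviality forces $j(E)$ to be a non-constant function on $\cC$, hence to have a pole at some place $\mathfrak q$, and $v_{\mathfrak q}(j(E))<0$ gives potential multiplicative reduction there by \cite[Ch.~VII, Prop.~5.5]{si1}. From that point on you diverge. The paper keeps the local analysis elementary, citing only Silverman Ch.~VII: it first passes to an extension $\tilde F$ (of degree dividing $24$) over which $E$ has multiplicative reduction at a prime $\tilde\infty$ above $\mathfrak q$, and then to a further quadratic extension $F'/\tilde F$ in which $\tilde\infty$ is inert, turning non-split into split. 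You instead appeal directly to part~(iii) of Tate's uniformization theorem (the paper's Theorem~\ref{tate-thm}), which says that for $v_{\mathfrak q}(j(E))<0$ there is a \emph{single} at-most-quadratic extension $L/F_{\mathfrak q}$ over which $E$ becomes isomorphic to the Tate curve; this is a sharper and cleaner local statement, and it subsumes the paper's two-step extension. Your globalization is also more explicit: you use weak approximation (density of $F$ in $F_{\mathfrak q}$ together with openness of square classes) to produce $\alpha\in F^\times$ generating $L$ locally and set $F'=F(\sqrt\alpha)$, whereas the paper simply asserts the existence of the relevant global extensions. What the paper's route buys is that it avoids forward-referencing Tate's theory, which it only develops afterwards in \S\ref{tate-subsec}; what your route buys is a one-step quadratic extension and a more transparent local-to-global passage. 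Both correctly rely on $p>3$ to guarantee separability of the quadratic (resp.\ small-degree) extensions involved.
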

\begin{proof} By assumption, $j(E)\notin\F_r$. In other words, $j(E)$ is a non-constant function on the smooth projective curve $\cC$, hence there exists a closed point $\infty$ of $\cC$ at which $j(E)$ has a pole. This means that $v_\infty(j(E))<0$, so $E$ has potential multiplicative reduction at $\infty$ (cf. \cite[Ch. VII, Propositions 5.4 and 5.5]{si1}). Thus we can find a finite extension $\tilde{F}$ of $F$ and a prime $\tilde\infty$ of $\tilde F$ above $\infty$ such that $E_{/\tilde F}$ has multiplicative reduction at $\tilde\infty$. At the cost of passing to a quadratic extension $F'$ of $\tilde F$ in which $\tilde\infty$ is inert (equal to a prime $\infty'$), $E$ acquires \emph{split} reduction. Finally, note (see \emph{loc. cit.}) that we are making an extension $\tilde{F}/F$ of degree dividing $24$ followed by an extension $F'/\tilde{F}$ of degree at most two: the separability of $F'/F$ is granted by our assumption that $p>3$. \end{proof}
Now let $L$ be a (not necessarily finite) separable extension of $F$ and let $G_L\subset G_F$ be the corresponding absolute Galois group. In the sequel, Proposition \ref{split-reduction-pro} will be applied in conjunction with the following result.
\begin{pro} \label{restriction-pro}
If $\rho_E(G_L)$ is an open subgroup of $\hat\Gamma$ such is $\rho_E(G_F)$, hence Theorem \ref{igusa-thm} holds for $E_{/F}$ if it holds for $E_{/L}$.
\end{pro}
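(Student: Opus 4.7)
The approach is entirely formal: the proposition reduces to two elementary observations about subgroups of topological groups. First, since $L$ is a separable extension of $F$, we may take $L$ inside the chosen separable closure $F^s$ of $F$; then $G_L = \gal{F^s}{L}$ sits naturally as a closed subgroup of $G_F$, so applying $\rho_E$ produces a containment $\rho_E(G_L) \subset \rho_E(G_F)$, and by Corollary \ref{inclusion-gamma-cor} both sides lie in $\hat{\Gamma}$.

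The second observation is the standard fact that, in any topological group, a subgroup which contains an open subgroup is itself open, being a disjoint union of translates of that open subgroup. Applied to the chain $\rho_E(G_L) \subset \rho_E(G_F) \subset \hat{\Gamma}$, this immediately yields the main assertion: openness of $\rho_E(G_L)$ in $\hat{\Gamma}$ forces openness of $\rho_E(G_F)$.

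The ``hence'' clause requires one additional remark, since Theorem \ref{igusa-thm} applied to $E_{/L}$ naturally involves the profinite group $\hat{\Gamma}_L$ built from the field of constants of $L$ via the sequence \eqref{gamma-hat-eq}, rather than the $\hat\Gamma = \hat\Gamma_F$ attached to $F$. In the intended application $L/F$ is finite (cf.\ Proposition \ref{split-reduction-pro}), so the field of constants of $L$ equals $\F_{r^s}$ for some integer $s \geq 1$, and the corresponding $\hat{H}_L$, defined as the closure of $\langle r^s \rangle$, has index dividing $s$ inside $\hat{H}$. Consequently $\hat{\Gamma}_L$ is a closed subgroup of finite index in $\hat{\Gamma}_F$, and in a profinite group such a subgroup is automatically open; any open subgroup of $\hat{\Gamma}_L$ is then open in $\hat{\Gamma}_F$, and the first part of the proof concludes the argument.

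I do not foresee any genuine obstacle here, as every step is topological bookkeeping rather than substantive mathematics. The only point meriting care is the identification of the two variants of $\hat{\Gamma}$ arising from the different base fields $F$ and $L$, which is harmless once one keeps track of the relationship between $\hat H_F$ and $\hat H_L$.
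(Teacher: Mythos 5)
Your proof of the core implication is identical to the paper's: observe that $G_L\subset G_F$ gives $\rho_E(G_L)\subset\rho_E(G_F)$, invoke Corollary \ref{inclusion-gamma-cor} to place both inside $\hat\Gamma$, and use the elementary fact that a subgroup of a topological group containing an open subgroup is itself open. Where you go beyond the paper is the final paragraph addressing the ``hence'' clause. The paper's proof stops after establishing the openness implication, treating the deduction of Theorem \ref{igusa-thm} for $E_{/F}$ as immediate; but as you correctly note, Theorem \ref{igusa-thm} applied to $E_{/L}$ gives openness in $\hat\Gamma_L$ (built from the constant field of $L$), not directly in $\hat\Gamma=\hat\Gamma_F$, and one must observe that $\hat\Gamma_L$ is an open (finite-index) subgroup of $\hat\Gamma_F$ to close the loop. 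Your argument for this---$\hat H_L\cong\gal{\bar\F_r}{\F_{r^s}}$ has finite index in $\hat H\cong\gal{\bar\F_r}{\F_r}$, hence $\hat\Gamma_L$ has finite index, hence is open, in $\hat\Gamma_F$---is correct and supplies a detail that the paper leaves implicit. This is a genuine, if minor, improvement in completeness over the published proof.
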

\begin{proof} This is a simple argument about topological groups. Suppose that $G$ is a topological group and let $H\subset H'$ be subgroups of $G$ with $H$ open in $G$. Then
\[ H'=\bigcup_{h'\in H'}h'H \]
is open in $G$ as well because it is the union of the open subsets $h'H$. To prove the proposition, observe that by Corollary \ref{inclusion-gamma-cor} we already know that $\rho_E(G_F)$ is a subgroup of $\hat\Gamma$, and then apply the above result to the subgroups $\rho_E(G_L)\subset\rho_E(G_F)$ of $\hat\Gamma$. \end{proof}

\subsection{Tate curves: an overview} \label{tate-subsec}

General references for the theory of Tate's analytic uniformization of elliptic curves are \cite[Ch. 15]{la}, \cite[Ch. IV, Appendix A.1]{se2} and \cite[Ch. V]{si3}, and we refer to them for more details and for proofs of the cited results.

Quite generally, in this $\S$ we let $K$ denote a field which is complete with respect to a discrete valuation $v$; we assume that the residue field of $K$ is perfect of characteristic $p>0$. Let $q\in K^\times$ be such that $v(q)>0$, and let $\langle q\rangle$ be the discrete subgroup of $K^\times$ generated by $q$. The \emph{Tate elliptic curve} (relative to $q$) is the curve with Weierstrass equation
\[ E_q:y^2+xy=x^3+a_4(q)x+a_6(q) \]
whose coefficients are given by the power series
\[ a_4(q):=-5\sum_{n\geq1}\frac{n^3q^n}{1-q^n}, \qquad a_6(q):=-\frac{1}{12}\sum_{n\geq1}\frac{(7n^5+5n^3)q^n}{1-q^n}. \]
Since $v(q)>0$, these series converge in the $v$-adic metric. The discriminant and the $j$-invariant of $E_q$ are given by the formulas
\[ \Delta(q)=q\prod_{n\geq1}(1-q^n)^{24}, \qquad j(q)=\frac{1}{q}+744+196884q+\dots, \]
which are clearly reminiscent of the corresponding ones from the complex case. If we define the series
\begin{align*}
x(u,q)&:=\sum_{n\in\mathbb Z}\frac{q^nu}{(1-q^nu)^2}-2\sum_{n\geq1}\frac{nq^n}{1-q^n},\\[2mm]
y(u,q)&:=\sum_{n\in\mathbb Z}\frac{q^{2n}u^2}{(1-q^nu)^3}+\sum_{n\geq1}\frac{nq^n}{1-q^n}
\end{align*}
we obtain a $v$-adic analytic uniformization
\begin{equation} \label{tate-unif-eq}
\begin{array}{rccl}
\phi: & \bar{K}^\times\big/\langle q\rangle & \overset{\cong}{\longrightarrow} & E_q(\bar K)\\[2mm]
      & u & \longmapsto & \bigl(x(u,q),y(u,q)\bigr).
\end{array}
\end{equation}
Since the action of $G_K:=\gal{\bar K}{K}$ on $\bar K$ is $v$-adically continuous, the map $\phi$ defined in \eqref{tate-unif-eq} is $G_K$-equivariant, i.e. $\phi$ is not only an isomorphism of $v$-adic Lie groups but also an isomorphism of $G_K$-modules. Of course, this property is of the utmost importance for arithmetic applications.

Because the $j$-invariant $j(q)$ of $E_q$ is not integral (i.e., $v(j(q))<0$), it is clear that, unlike what happens for elliptic curves over the complex numbers, \emph{not} every elliptic curve over $K$ is analytically isomorphic to a quotient $\bar K^\times/\langle q\rangle$ for some $q\in K^\times$ with $v(q)>0$. More precisely, the reduction $\tilde{E}_q$ of $E_q$ modulo $v$ has the equation
\[ \tilde{E}_q: y^2+xy=x^3, \]
so $E_q$ has split multiplicative reduction over $K$. The crucial point in Tate's theory is that the non-integrality of the $j$-invariant is a necessary and sufficient condition for an elliptic curve $E$ over $K$ to be analytically uniformized as above. Indeed, the following fundamental result holds.
\begin{thm}[Tate] \label{tate-thm}
Let $K$ be as before.
\begin{itemize}
\item[\emph{i)}] For every $q\in K^\times$ with $v(q)>0$ the map
\[ \phi:\bar{K}^\times\big/\langle q\rangle\longrightarrow E_q(\bar K) \]
described in \eqref{tate-unif-eq} is an isomorphism of $G_K$-modules.
\item[\emph{ii)}] For every $j_0\in K^\times$ with $v(j_0)<0$ there is a unique $q\in K^\times$ with $v(q)>0$ such that the Tate elliptic curve $E_{q/K}$ has $j$-invariant $j_0$. The curve $E_q$ is characterized by the equality $j(E_q)=j_0$ and the fact that it has split multiplicative reduction over $K$.
\item[\emph{iii)}] Let $E_{/K}$ be an elliptic curve with non-integral $j$-invariant $j_0\in K^\times$, and let $E_q$ be the Tate curve with $j$-invariant $j_0$ as in $ii)$. If $E$ has split multiplicative reduction then $E$ is isomorphic to $E_q$ over $K$, while if $E$ does not have split multiplicative reduction  then there is a unique quadratic extension $L$ of $K$ such that $E$ is isomorphic to $E_q$ over $L$.
\end{itemize}
\end{thm}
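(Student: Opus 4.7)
The plan is to treat the three parts in sequence, broadly following the strategy of Tate's original memoir as systematized in \cite[Ch.~V]{si3}. For part i), I would first verify that the series $x(u,q)$ and $y(u,q)$ converge $v$-adically for every $u \in \bar{K}^\times \setminus \langle q\rangle$: since $v(q) > 0$, the terms with $n \geq 1$ have valuation growing linearly in $n$, and rewriting $q^n u/(1-q^n u)^2$ for $n \leq -1$ via the substitution $q^n u \leftrightarrow (q^n u)^{-1}$ handles the negative-index terms. A direct manipulation of the resulting sums then yields both the functional equation $\phi(qu) = \phi(u)$ (so that $\phi$ descends to the quotient) and the fact that $\bigl(x(u,q), y(u,q)\bigr)$ satisfies the Weierstrass equation of $E_q$, reducing in the end to identities between the $q$-series defining $a_4(q)$ and $a_6(q)$. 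Galois equivariance is then immediate, since all coefficients lie in $K$ and the convergence is uniform on bounded annuli. The real work is bijectivity, and the cleanest route is to show that $\phi$ restricts to an isomorphism between the formal multiplicative group on $\{v(u-1) > 0\}$ and the formal group $\hat{E}_q$ of the reduction, and then extend globally by comparing the natural short exact sequences on the two sides coming from reduction modulo $v$ and the component group at the node.

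For part ii), I would factor $j(q) = q^{-1}\bigl(1 + 744 q + 196884 q^2 + \cdots\bigr)$; setting $w := j_0^{-1}$, which satisfies $v(w) > 0$, the equation $j(q) = j_0$ rearranges to $q = w \bigl(1 + 744 q + \cdots\bigr)^{-1}$, and Lagrange inversion produces a unique solution $q \in K$ with $v(q) = v(w) > 0$ expressible as a power series in $w$ with integer coefficients. The characterization by reduction type is then immediate from the fact that the Weierstrass equation of $E_q$ reduces to $y^2 + xy = x^3$, whose unique singular point is a node at the origin with tangent lines of slopes $0$ and $-1$ lying in the prime field, so the reduction is split multiplicative.

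For part iii), the key observation is that $v(j_0) < 0$ forces $j_0 \notin \{0, 1728\}$, so $E$ and $E_q$ are quadratic twists of one another and $\mathrm{Aut}_{\bar{K}}(E_q) = \{\pm 1\}$. Galois cohomology classifies the $K$-twists of $E_q$ by $H^1(G_K, \{\pm 1\}) \cong K^\times/(K^\times)^2$, with the twist $E^{(d)}$ becoming isomorphic to $E_q$ precisely over $L = K(\sqrt{d})$. A case analysis on the square class of $d$ (trivial; unit non-square; uniformizer modulo squares) matches the three possible reduction types at the closed point of $K$ (split multiplicative; non-split multiplicative, trivialized over the unramified quadratic extension; additive with potential multiplicative reduction, trivialized over the unique ramified quadratic extension). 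Since the isomorphism class of $E$ over $K$ determines $d$ modulo squares, the quadratic extension $L$ is unique in the non-trivial cases.

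The main obstacle is the bijectivity assertion in part i), because the series defining $\phi$ manifestly converge well only on restricted annuli and one must upgrade this to a statement about the whole group. I would handle this by working with parallel filtrations on $\bar{K}^\times / \langle q\rangle$ and on $E_q(\bar{K})$ -- coming from $v$-adic neighborhoods of $1$ on one side and from reduction on the other -- and by applying the snake lemma to reduce bijectivity to two easier claims: one on the formal groups (the formal logarithm gives the classical isomorphism between the formal multiplicative group and $\hat{E}_q$, valid over $\mathcal{O}_K$ itself in the Tate case) and one on the quotients, where both sides are naturally identified via the description of $\tilde{E}_q$ and its component group.
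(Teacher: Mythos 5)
The paper does not actually prove Theorem \ref{tate-thm}; immediately after the statement it simply says ``A complete proof of this theorem can be found in \cite[Ch. V]{si3}'' and moves on, so there is nothing internal to compare your argument against. That said, your sketch is essentially a faithful outline of the proof in that reference, and it is sound as far as it goes. For part i), the reduction of bijectivity to the formal-group isomorphism plus the snake lemma on reduction filtrations is the standard argument; the one thing worth flagging is that ``Galois equivariance is immediate, since all coefficients lie in $K$'' is slightly too quick -- one needs that $G_K$ acts $v$-adically continuously on $\bar{K}$ (the paper in fact says exactly this) so that the Galois action commutes with taking limits of the partial sums. For part ii), the power-series inversion $q = w\,(1 + 744q + \cdots)^{-1}$ with $w = j_0^{-1}$ and successive approximation is precisely right, and the claim that the coefficients are integers so that $q \in K$ with $v(q) = v(w)$ is correct. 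For part iii), your $H^1(G_K,\{\pm 1\}) \cong K^\times/(K^\times)^2$ classification is the standard route (valid here since the residue characteristic $p > 3$ is odd); one small imprecision is that your ``three possible reduction types'' correspond to four square classes -- the additive case splits into two distinct ramified quadratic extensions -- but you correctly observe that in each case $L$ is determined by the class of $d$, which is determined by $E$, so the uniqueness assertion holds. In short: your proposal is correct in outline and matches the standard treatment the paper delegates to; it is not an alternative to anything in the paper, since the paper provides no proof.
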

A complete proof of this theorem can be found in \cite[Ch. V]{si3}. If $E_{/K}$ is an elliptic curve with non-integral $j$-invariant, the element $q\in K^\times$ whose existence is established in Theorem \ref{tate-thm} is called ``Tate period'' for $E$.
\begin{cor} \label{tate-representation-cor}
Let $E_{/K}$ be an elliptic curve with non-integral $j$-invariant and split multiplicative reduction, and retain the notation of Theorem \ref{tate-thm}.
\begin{itemize}
\item[\emph{i)}] Let $n\geq0$ be an integer not divisible by $p$, let $\zeta_n$ be a primitive $n$th root of unity in $\bar K$ and fix an $n$th root $q^{1/n}$ of $q$ in $\bar K$. There is an isomorphism
\[ E[n]\cong\big\langle q^{1/n},\zeta_n\big\rangle\big/\langle q\rangle \]
of $G_K$-modules.
\item[\emph{ii)}] Let $\pi$ be a uniformizer of $K$, write $q=\pi^eu$ with $e:=v(q)>0$ and denote $v_\ell$ the $\ell$-adic valuation on $\Q$ for a prime $\ell$. For all primes $\ell\not=p$ and integers $m>v_\ell(e)$ the field $K\bigl(q^{1/\ell^m},\zeta_{\ell^m}\bigr)$ admits an automorphism $\sigma$ over $K$ leaving $\zeta_{\ell^m}$ fixed and such that $\sigma(q^{1/\ell^m})=(\zeta_{\ell^m})^{\ell^{v_\ell(e)}}\cdot q^{1/\ell^m}$. Thus there exists an element
\[ \sigma\in\gal{K(E[\ell^m])}{K} \]
which is represented by $\bigl(\begin{smallmatrix}1&\ell^{v_\ell(e)}\\0&1\end{smallmatrix}\bigr)$ with respect to the basis of $E[\ell^m]$ corresponding to the basis $\bigl\{\zeta_{\ell^m},q^{1/\ell^m}\bigr\}$ of $E_q[\ell^m]$.
\item[\emph{iii)}] With notation and conventions as before, the group $\gal{K(E[\ell^\infty])}{K}$ contains the subgroup
\[ \begin{pmatrix}1&\ell^{v_\ell(e)}\Z_\ell\\0&1\end{pmatrix} \]
of $GL_2(\Z_\ell)$ for all primes $\ell\not=p$.
\end{itemize}
\end{cor}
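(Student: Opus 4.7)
The plan is to derive all three parts in sequence from Tate's uniformization (Theorem \ref{tate-thm}(i)) combined with Kummer theory, using the $G_K$-equivariant identification $\phi:\bar K^\times/\langle q\rangle\xrightarrow{\cong}E_q(\bar K)$ to translate torsion statements into multiplicative statements about roots of $q$.

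For part (i), an element $u\in\bar K^\times/\langle q\rangle$ is $n$-torsion iff $u^n\in\langle q\rangle$; writing $u^n=q^k$ and fixing an $n$-th root $q^{1/n}$, any such $u$ has the form $\zeta\cdot q^{k/n}$ with $\zeta\in\boldsymbol\mu_n$. Reducing $k$ modulo $n$ identifies $E_q[n]$ with $\langle\zeta_n,q^{1/n}\rangle/\langle q\rangle$, and Galois-equivariance is automatic since $\phi$ is $G_K$-equivariant.

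For part (ii) my approach is Kummer-theoretic. The extension $L:=K(\zeta_{\ell^m},q^{1/\ell^m})$ is cyclic over $K(\zeta_{\ell^m})$ of degree $d$ equal to the order of the class of $q$ in $K(\zeta_{\ell^m})^\times/(K(\zeta_{\ell^m})^\times)^{\ell^m}$, and the cocycle $\sigma\mapsto\sigma(q^{1/\ell^m})/q^{1/\ell^m}$ identifies $\gal{L}{K(\zeta_{\ell^m})}$ with the unique subgroup of $\boldsymbol\mu_{\ell^m}$ of order $d$. The key input is the lower bound $d\geq\ell^{m-v_\ell(e)}$: since $\ell\neq p$, the extension $K(\zeta_{\ell^m})/K$ is unramified, so $\pi$ remains a uniformizer and $q$ still has valuation $e$; if $q$ were an $\ell^k$-th power in $K(\zeta_{\ell^m})$, then $\ell^k$ would divide $e$ and hence $k\leq v_\ell(e)$, which forces the claim. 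Consequently one may pick $\sigma$ sending $q^{1/\ell^m}$ to $\zeta_{\ell^m}^{\ell^{v_\ell(e)}}q^{1/\ell^m}$ while fixing $\zeta_{\ell^m}$. Translating the multiplicative action into the additive structure of $E[\ell^m]$ on the basis $\{\zeta_{\ell^m},q^{1/\ell^m}\}$ from part (i) reads off the matrix $\bigl(\begin{smallmatrix}1 & \ell^{v_\ell(e)}\\0&1\end{smallmatrix}\bigr)$.

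Part (iii) is then a passage to the inverse limit: choosing compatible roots $\zeta_{\ell^{m+1}}^{\ell}=\zeta_{\ell^m}$ and $(q^{1/\ell^{m+1}})^{\ell}=q^{1/\ell^m}$ produces a $\Z_\ell$-basis of $T_\ell(E_q)$, and because the matrix $\bigl(\begin{smallmatrix}1 & \ell^{v_\ell(e)}\\0&1\end{smallmatrix}\bigr)$ has integer entries independent of $m$, the Galois elements constructed in part (ii) are automatically compatible under the reduction-mod-$\ell^m$ maps. They therefore assemble into a single element of $\gal{K(E[\ell^\infty])}{K}$ realized by this matrix in $GL_2(\Z_\ell)$, and the closure of the cyclic subgroup it generates is exactly $\bigl(\begin{smallmatrix}1 & \ell^{v_\ell(e)}\Z_\ell\\0&1\end{smallmatrix}\bigr)$. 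The main obstacle is the valuation-based estimate on $d$ in part (ii); once that is established, the rest is bookkeeping and a routine inverse-limit argument.
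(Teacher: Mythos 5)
Your proposal is correct and takes essentially the same approach as the paper: the paper's own proof of this corollary is the one-line remark that (i) follows from Theorem \ref{tate-thm}(i), (ii) from Kummer theory, and (iii) from (ii), and your argument is precisely a fully worked-out version of that sketch, including the key Kummer-theoretic valuation bound $d\geq\ell^{m-v_\ell(e)}$ that makes (ii) go through.

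One very small point worth tightening: in (iii) the compatibility of the $\sigma_m$ under restriction is not automatic from the matrix entries alone, but it does follow directly once one checks that $\sigma_m$ restricted to $K(\zeta_{\ell^{m-1}},q^{1/\ell^{m-1}})$ fixes $\zeta_{\ell^{m-1}}=\zeta_{\ell^m}^\ell$ and sends $q^{1/\ell^{m-1}}=(q^{1/\ell^m})^\ell$ to $\zeta_{\ell^{m-1}}^{\ell^{v_\ell(e)}}q^{1/\ell^{m-1}}$; alternatively a compactness argument on the closed image $\gal{K(E[\ell^\infty])}{K}\subset GL_2(\Z_\ell)$ gives the same conclusion without choosing compatible roots.
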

\begin{proof} Part \emph{i)} is an immediate consequence of part \emph{i)} of Theorem \ref{tate-thm}, while \emph{ii)} follows from Kummer theory. Finally, \emph{iii)} is implied by \emph{ii)}. \end{proof}
We shall use these results in the following situation. By Proposition \ref{restriction-pro}, in order to prove Theorem \ref{igusa-thm} we can extend the ground field $F$ to any separable extension; on the other hand, Proposition \ref{split-reduction-pro} guarantees the existence of a prime of split multiplicative reduction for $E$ in a suitable finite separable extension of $F$. When combined together, these two results say that it is not restrictive for us to assume that the prime $\infty$ of $F$ that we chose at the outset is of split multiplicative reduction for $E$, which from here on we do without any further comment. In particular, there are a Tate period $q\in F^\times_\infty$ and a $\gal{\bar F_\infty}{F_\infty}$-equivariant short exact sequence
\[ 0\longrightarrow\langle q\rangle\longrightarrow\bar F^\times_\infty\longrightarrow E(\bar F_\infty)\longrightarrow0 \]
which expresses the geometric points of $E_{/F_\infty}$ as a quotient of a one-dimensional $\infty$-adic torus by an infinite cyclic subgroup.\\

As an application of Tate's uniformization, we conclude this $\S$ by showing that the prime-to-$p$ torsion of $E$ is rational over $F^s$.
\begin{pro} \label{sep-torsion-pro}
With notation as above, let $k$ be the smallest positive integer such that $q\notin(F^\times_\infty)^{p^k}$ and let $P\in E_\mathrm{tors}(\bar F)$. Then $P\in E_\mathrm{tors}(F^s)$ if and only if $p^k$ does not divide the order of $P$.
\end{pro}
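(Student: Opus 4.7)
The plan is to combine Tate's analytic uniformization (which realizes $E(\bar F_\infty)$ as $\bar F_\infty^\times/\langle q\rangle$) with elementary Kummer theory over $F_\infty$, and then to pass from separability over $F_\infty$ to separability over $F$. Let $P \in E_{\mathrm{tors}}(\bar F)$ have order $N = p^j m$ with $(m,p)=1$, and decompose $P = P_m + P_{p^j}$ into its prime-to-$p$ and $p$-primary parts. By Corollary~\ref{tate-representation-cor}(\emph{i}), $E[m]$ is generated by the images of $q^{1/m}$ and $\zeta_m$, and both lie in $E(F_\infty^s)$ because the polynomials $X^m-q$ and $X^m-1$ are separable over $F_\infty$ (as $(m,p)=1$). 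Thus $P_m \in E(F_\infty^s)$ unconditionally, and the proposition reduces to analysing the $p$-primary part.

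For the $p$-primary part, the crucial input is that $\mu_{p^j}=\{1\}$ in characteristic $p$; hence Corollary~\ref{tate-representation-cor}(\emph{i}) gives $E[p^j] \cong \langle q^{1/p^j}\rangle/\langle q\rangle$, cyclic of order $p^j$. Writing $P_{p^j} = \phi(q^{a/p^j})$ with $a \in \Z/p^j\Z$ yields $\mathrm{ord}(P_{p^j}) = p^{\,j-v_p(a)}$. By Galois-equivariance of $\phi$, $P_{p^j}\in E(F_\infty^s)$ if and only if the class of $q^{a/p^j}$ modulo $\langle q\rangle$ admits a representative in $(F_\infty^s)^\times$, equivalently $q^a \in \bigl((F_\infty^s)^\times\bigr)^{p^j}$. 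Since $X^{p^j}-q^a = (X-q^{a/p^j})^{p^j}$ in characteristic $p$, the $p^j$-th root of $q^a$ is unique in $\bar F_\infty$ and purely inseparable over $F_\infty$, so being in $F_\infty^s$ forces it to lie in $F_\infty$ itself. Now write $q = c^{\,p^{k-1}}$ with $c \in F_\infty^\times$ not a $p$-th power (possible by the minimality built into the definition of $k$), and use the elementary fact, immediate from the injectivity of Frobenius, that $c^b \in (F_\infty^\times)^{p^r}$ iff $p^r \mid b$. Then $q^a \in (F_\infty^\times)^{p^j}$ becomes $p^j \mid ap^{k-1}$, i.e.\ $v_p(a) \geq j-k+1$, which is the same as $\mathrm{ord}(P_{p^j}) \leq p^{k-1}$, i.e.\ $p^k \nmid \mathrm{ord}(P_{p^j})$. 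Combining this with the prime-to-$p$ part yields $P \in E(F_\infty^s)$ iff $p^k \nmid \mathrm{ord}(P)$.

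The final step is to pass from separability over $F_\infty$ to separability over $F$, i.e.\ to establish the equality $\bar F \cap F_\infty^s = F^s$ under the fixed embedding $\bar F \hookrightarrow \bar F_\infty$. The non-trivial direction goes as follows: if $\alpha \in \bar F$ were separable over $F_\infty$ but had inseparable degree $p^e \geq p$ over $F$, then $\alpha$ would be purely inseparable of degree $p^e$ over the finite separable extension $K := F(\alpha^{p^e}) \subseteq F^s$, while simultaneously lying in $F_\infty \cdot K$. Since $F/\F_r$ is separably generated, $F_\infty/F$ is a separable field extension in the sense of MacLane, a property preserved under base change by the separable algebraic extension $K/F$; so $F_\infty \cdot K / K$ is a separable field extension and therefore contains no non-trivial purely inseparable algebraic element over $K$. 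This forces $\alpha \in K \subseteq F^s$, contradicting the assumption that $\alpha$ is purely inseparable of degree $p^e \geq p$ over $K$. I expect this last step to be the main technical obstacle, as it rests on the subtle distinction between separable algebraic extensions and separable field extensions in positive characteristic; everything else is routine bookkeeping with Tate's uniformization and Kummer theory.
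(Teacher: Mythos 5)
Your proof is correct and follows the same overall strategy as the paper's (use Tate's uniformization to translate rationality of torsion into a statement about roots of $q$ and of unity over $F_\infty$, then transfer the separability statement from $F_\infty$ down to $F$), but you carry out both halves in a different and more explicit way. For the local analysis, the paper's proof is a single sentence: "the geometric points in $E[n]$ are rational over $F_\infty(\boldsymbol\mu_n,q^{1/n})\cap\bar F$, and the proposition follows from the next lemma"; you instead decompose $P$ into prime-to-$p$ and $p$-primary parts, handle the prime-to-$p$ part by Kummer theory, and for the $p$-part do the explicit bookkeeping with $q=c^{p^{k-1}}$ and $v_p(a)$. All of that is correct (and arguably clarifies what the paper leaves implicit). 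For the key lemma $\bar F\cap F_\infty^s=F^s$, your route differs from the paper's. The paper observes that purely inseparable extensions of $F$ are \emph{totally ramified at all places} (citing Rosen, Prop.\ 7.5); hence if $a\in F\setminus F^p$, the degree-$p$ extension $F(a^{1/p})/F$ stays degree $p$ after completing at $\infty$, so $a^{1/p}\notin F_\infty$. You instead assert that $F_\infty/F$ is separable in the MacLane sense and deduce the same conclusion from that. Your assertion is true, but the stated reason --- "since $F/\F_r$ is separably generated" --- is not by itself a proof: the implication requires either an appeal to excellence of the relevant Dedekind domains, or the observation that a uniformizer $\pi$ at $\infty$ automatically satisfies $\pi\notin F^p$ (since $v_\infty(\pi)=1\notin p\Z$), hence is a separating element, so that writing $F^{1/p}=F(\pi^{1/p})$ inside $\kappa_\infty((\pi^{1/p}))$ and comparing fractional exponents gives $F^{1/p}\cap F_\infty=F$. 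Either of these makes your step rigorous; without one of them (or a citation), the deduction of MacLane separability is the one genuinely under-justified point. The paper's total-ramification argument is the more elementary and more self-contained of the two, and is the one you would want to cite if you do not wish to develop the MacLane/excellence machinery.
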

\begin{proof} Let $n$ be the order of $P$ and let $q^{1/n}$ be an $n$th root of $q$ in $\bar F_\infty$. The geometric points in $E[n]$ are rational over $F_\infty(\boldsymbol\mu_n,q^{1/n})\cap\bar F$, and the proposition follows from the next lemma. \end{proof}
\begin{lem}
$F^s=\bar F\cap F^s_\infty$.
\end{lem}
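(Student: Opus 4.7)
The plan is to prove the two inclusions separately. The easy direction $F^s \subset \bar F \cap F^s_\infty$ is formal: any $\alpha \in F^s$ has separable minimal polynomial $f \in F[X]$, and the minimal polynomial of $\alpha$ over $F_\infty$ divides $f$ in $F_\infty[X]$, hence is itself separable, so $\alpha$ lies in $F^s_\infty$.

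For the reverse inclusion I would argue by contradiction. Assume $\alpha \in \bar F \cap F^s_\infty$ but $\alpha \notin F^s$. Since $\bar F/F^s$ is purely inseparable, some power $\alpha^{p^r}$ lies in $F^s$; choosing $r \geq 1$ minimal and replacing $\alpha$ by $\alpha^{p^{r-1}}$ (which still lies in $\bar F \cap F^s_\infty$ but not in $F^s$), I may assume $\beta := \alpha^p$ lies in $F^s$. The minimal polynomial of $\alpha$ over $F_\infty$ then divides $X^p - \beta = (X-\alpha)^p$ in $\overline{F_\infty}[X]$, and its assumed separability forces it to equal $X - \alpha$; hence $\alpha \in F_\infty$.

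The contradiction will follow from the following Key Fact: for any function field $E$ over $\F_r$ and any place $v$ of $E$, one has $E \cap E_v^p = E^p$. Granting this, apply it to $E := F(\beta)$ (a function field over $\F_r$, since $E/F$ is finite separable) and to any place $v$ of $E$ lying above $\infty$: then $F_\infty \subset E_v$, so $\alpha \in E_v$ and $\beta = \alpha^p \in E \cap E_v^p = E^p$. Writing $\beta = \delta^p$ with $\delta \in E$ and using uniqueness of $p$-th roots in characteristic $p$, one obtains $\alpha = \delta \in E \subset F^s$, contradicting the assumption.

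It remains to prove the Key Fact, which is the only nontrivial input and hence the main obstacle. I would use derivations: if $\pi$ is a uniformizer at $v$, smoothness of the underlying curve at the closed point corresponding to $v$ ensures that $\pi$ is a separating transcendence basis for $E/\F_r$, so the derivation $d/d\pi$ on $E$ has kernel exactly $E^p$. On $E_v = k_v((\pi))$, a direct power-series computation (using that $k_v$, being finite over $\F_r$, is perfect) shows $\ker(d/d\pi) = E_v^p$. The two derivations agree on $\{\pi\}$ and are both $\F_r$-linear, so they agree on all of $E$; equating kernels restricted to $E$ then yields $E \cap E_v^p = E^p$.
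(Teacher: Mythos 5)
Your overall strategy is sound, and it is a genuinely different route from the paper's. The paper gives a one-line proof: it invokes the fact (deducible from Rosen, Prop.\ 7.5) that purely inseparable extensions of a global function field are \emph{totally ramified} at every place. From this, if $\alpha\in\bar F\cap F^s_\infty$ with $\alpha\notin F^s$, then putting $L:=F(\alpha)$ and $L':=L\cap F^s$, the extension $L/L'$ is purely inseparable of degree $>1$, hence totally ramified at the place under $\infty$; so the corresponding local extension $L_w/L'_{w'}$ has degree $>1$ and is purely inseparable, yet $L_w\subset F^s_\infty$ forces it to be separable --- a contradiction. Your argument replaces this ramification-theoretic input by the statement $E\cap E_v^p=E^p$, which you prove by comparing the kernel of $d/d\pi$ on the function field $E$ (equal to $E^p$, since $\pi$ is a separating element) with the kernel of the power-series derivation $d/d\pi$ on $E_v=k_v((\pi))$ (equal to $E_v^p$, using perfectness of $k_v$). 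The two inputs are essentially equivalent in strength; your version is more self-contained (no appeal to decomposition of places under purely inseparable extensions) at the cost of a longer computation.

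There is, however, a small but genuine slip in the middle of the reduction step. You claim ``the minimal polynomial of $\alpha$ over $F_\infty$ divides $X^p-\beta$ \dots\ hence $\alpha\in F_\infty$.'' This does not follow, because $\beta\in F^s$ need \emph{not} lie in $F_\infty$, so $X^p-\beta$ need not be a polynomial over $F_\infty$ at all. The correct statement is that the minimal polynomial of $\alpha$ over $F_\infty(\beta)$ divides $X^p-\beta=(X-\alpha)^p$; since $\alpha$ is separable over $F_\infty$, and a fortiori over the larger field $F_\infty(\beta)$, this minimal polynomial must be $X-\alpha$, giving $\alpha\in F_\infty(\beta)$. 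This is exactly what your next step actually uses: with $E=F(\beta)$ and $v$ the place of $E$ over $\infty$ induced by the fixed embedding $\bar F\hookrightarrow\bar F_\infty$, one has $E_v=F_\infty\cdot E=F_\infty(\beta)$, so $\alpha\in E_v$. (One should also replace ``any place $v$ above $\infty$'' by ``the place $v$ determined by the chosen embedding,'' so that $\alpha$ and $\beta$ land in the same completion.) With these corrections the argument is complete and correct.
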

\begin{proof} It is enough to observe that the purely inseparable extensions of $F$ are totally ramified at all places (as can be deduced, e.g., from \cite[Proposition 7.5]{r}). \end{proof}

\subsection{Galois groups: horizontal control} \label{horizontal-subsec}

In this $\S$ we prove two results describing (in a strong way) the asymptotic behaviour of the Galois groups of $F(E[\ell])$ and of $F(E[\ell^\infty])$ over $F$ when $\ell$ varies. As we shall see, the fact that $E$ admits, locally at $\infty$, an analytic uniformization will be crucially exploited.

As a notational convention, if $R$ is a domain denote $Q(R)$ the quotient field of $R$. We begin with two algebraic lemmas.
\begin{lem} \label{H-open-lem}
Let $R$ be either a discrete valuation ring or a (topological) field. Let $H$ be a subgroup of $GL_2(R)$ that acts irreducibly on $Q(R)^2$ and suppose that $H$ contains the subgroup $\bigl(\begin{smallmatrix}1&I\\0&1\end{smallmatrix}\bigr)$ where $I$ is a nonzero ideal of $R$. Then $H$ contains an open subgroup of $SL_2(R)$.
\end{lem}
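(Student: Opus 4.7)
The plan is to use the irreducibility hypothesis to produce a second "unipotent-like" subgroup of $H$ opposite to $U^+(I):=\bigl(\begin{smallmatrix}1&I\\0&1\end{smallmatrix}\bigr)$, and then to show that $U^+(I)$ together with this second subgroup generates an open subgroup of $SL_2(R)$.

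First I would extract an element $g=\bigl(\begin{smallmatrix}a&b\\c&d\end{smallmatrix}\bigr)\in H$ with $c\neq 0$. This is where irreducibility enters: if every element of $H$ had vanishing lower-left entry, then the line $Q(R)\cdot\binom{1}{0}$ would be $H$-stable, contradicting the hypothesis. Next, for any $x\in I$, a direct matrix computation shows
\[
g\begin{pmatrix}1&x\\0&1\end{pmatrix}g^{-1}
=\begin{pmatrix}1-acx/\delta & a^{2}x/\delta\\ -c^{2}x/\delta & 1+acx/\delta\end{pmatrix},
\qquad \delta:=\det g,
\]
which sits inside $H$ and has a nonzero $(2,1)$-entry whenever $x\neq 0$.

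Second, I would form the commutator of an element of $U^+(I)$ with an element of $gU^+(I)g^{-1}$. A short calculation shows that the commutator of $\bigl(\begin{smallmatrix}1&y\\0&1\end{smallmatrix}\bigr)$ with the conjugate above is a matrix in $SL_2(R)$ whose $(2,1)$-entry is $y\gamma^{2}=yc^{4}x^{2}/\delta^{2}$, and whose other entries lie in a suitable power of the ideal generated by $c^{2}I/\delta$. Letting $x,y$ range over $I$, this provides a family of elements of $H\cap SL_2(R)$ whose lower-triangular entries fill out a nonzero ideal $J\subseteq R$. By a further commutator step (or by multiplying by suitable elements of $U^+(I)$ to cancel the upper-triangular parts), one extracts from this family the full lower-unipotent subgroup $U^-(J):=\bigl(\begin{smallmatrix}1&0\\J'&1\end{smallmatrix}\bigr)$ for some nonzero ideal $J'\subseteq R$ (in the DVR case) or an open neighbourhood of $1$ in $U^-$ (in the topological field case).

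Finally, I would invoke the standard fact that $SL_2(R)$ is topologically generated by the two opposite unipotent subgroups $U^+(R)$ and $U^-(R)$: more precisely, for any nonzero ideals $I,J'\subseteq R$, the subgroup of $SL_2(R)$ generated by $U^+(I)$ and $U^-(J')$ contains the principal congruence subgroup modulo $II'$ for some nonzero ideal $I'\subseteq R$, hence is open. (In the case of a topological field, one argues analogously using open neighbourhoods of $1$ in each root subgroup and the fact that the Bruhat big cell $U^+TU^-$ is open in $SL_2$.) This places an open subgroup of $SL_2(R)$ inside $H$, as desired.

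The main obstacle is the cleanup in the second step: isolating a \emph{pure} lower-unipotent contribution from the commutator, whose off-diagonal entries are intrinsically mixed with toral and upper-unipotent correction terms. This is handled by iterating the commutator construction and using the fact that $H\supseteq U^+(I)$ to absorb the upper-triangular error, but one has to keep track of the ideals produced at each step to make sure the final ideal $J'$ is nonzero, which is where the condition $c\neq 0$ and the nonvanishing of $I$ become essential.
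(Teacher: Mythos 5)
Your overall strategy is genuinely different from the paper's: the paper (following Serre, \cite[Ch.\ IV, \S 3.2, Lemma 2]{se2}) changes basis of $Q(R)^2$ so that a conjugate $gU^+(I)g^{-1}$ \emph{becomes} lower-unipotent, and then applies the Lang-type generation result to the two opposite unipotent subgroups; you instead try to manufacture lower-unipotents in the \emph{original} basis by commutators. That route is defensible, but the cleanup step as you describe it does not work, and this is a real gap.

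Here is why. Your conjugate $v = gu_xg^{-1}$ has the form $\bigl(\begin{smallmatrix}1-\alpha&\beta\\\gamma&1+\alpha\end{smallmatrix}\bigr)$ with $\alpha=acx/\delta$, $\beta=a^2x/\delta$, $\gamma=-c^2x/\delta$, and the commutator $[u_y,v]$ indeed has $(2,1)$-entry $y\gamma^2$, as you say. But the diagonal entries of that commutator are $1+y\gamma(1-\alpha+y\gamma)$ and $1-y\gamma(1-\alpha)$, so the diagonal deviations are of size $y\gamma$, which (since $\gamma$ lies in the maximal ideal of the DVR, $\delta$ being a unit because $g\in GL_2(R)$) is an \emph{earlier} power of the uniformizer than $y\gamma^2$. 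Thus the lower-left entry is the \emph{smallest} entry, and neither iterating the commutator nor conjugating by elements of $U^+(I)$ can isolate a lower unipotent: any $u_z\in U^+(I)$ you multiply by has $z\in I$, whereas killing the diagonal would require $z$ of valuation roughly $-v(\gamma)$, which is not in $I$ (nor generally in $R$). This is precisely the ``one has to keep track of the ideals'' worry you flag at the end, and it is fatal to the plan as written.

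The fix that keeps your approach (no change of basis) is the following two-step cleanup, which replaces your commutator step entirely. If $a=0$ then $gU^+(I)g^{-1}$ is already lower-unipotent, so assume $a\neq 0$. First, for $x\in I$ set $y:=-\beta/(1+\alpha)\in I$ (legitimate because $1+\alpha$ is a unit); then $u_y\cdot gu_xg^{-1}$ is \emph{lower triangular} with diagonal $\bigl(1/(1+\alpha),\,1+\alpha\bigr)$. Second, pick $x'=-x/(1+\alpha)\in I$ and form the corresponding lower-triangular element $M'$; a direct computation shows $MM'$ has diagonal $(1,1)$ and $(2,1)$-entry $-ac^3x^2/\delta^2$, so $MM'\in U^-(J')$ with $J'=ac^3I^2/\delta^2$, a nonzero ideal of $R$. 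At that point you are back on the paper's track: $\langle U^+(I),U^-(J')\rangle$ contains a principal congruence subgroup (the Lang-type result you cite at the end), hence $H$ contains an open subgroup of $SL_2(R)$. With this replacement your proof is correct and is a legitimate alternative to the paper's change-of-basis argument; without it, the middle step does not go through.
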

In particular, if $R$ is a field then $I=R$ and $H=SL_2(R)$.
\begin{proof}[Sketch of proof.] By adapting the proof of \cite[Ch. XIII, Lemma 8.1]{la2} it can be shown that
\[ \Big\langle\begin{pmatrix}1&0\\I&1\end{pmatrix},\begin{pmatrix}1&I\\0&1\end{pmatrix}\Big\rangle\supset\ker\Big(SL_2(R)\longrightarrow SL_2\bigl(R/I^2\bigr)\Big). \]
The kernel on the right is understood to be the whole $SL_2(R)$ if $I=R$. Observe that this kernel is open in $SL_2(R)$ because all nonzero ideals in $R$ are open, hence the quotient $R/I^2$ is discrete. Finally, from the irreducibility condition and the fact that $\bigl(\begin{smallmatrix}1&I\\0&1\end{smallmatrix}\bigr)$ is contained in $H$ one can deduce the existence of a suitable basis of $Q(R)^2$ such that $H$ contains $\bigl(\begin{smallmatrix}1&0\\I&1\end{smallmatrix}\bigr)$ as well (see \cite[Ch. IV, \S 3.2, Lemma 2]{se2} for details), and this completes the proof of the lemma. \end{proof}
\begin{lem} \label{lin-disj-lem}
If $p\nmid n$ then $\gal{F(\boldsymbol\mu_n)}{F}=H_n$.
\end{lem}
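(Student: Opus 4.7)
The plan is to deduce this from the compositum formula in Galois theory together with the fact, noted early in the paper (and coming from the geometric irreducibility of $\cC$), that $\F_r$ is algebraically closed in $F$.

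First I would observe that since $p\nmid n$, the extension $\F_r(\boldsymbol\mu_n)/\F_r$ is a finite Galois extension with cyclic Galois group canonically identified with $H_n=\langle r\rangle\subset(\Z/n\Z)^\times$ via the Frobenius. Because $\boldsymbol\mu_n\subset\bar{\F}_r$, we have the compositum identity $F(\boldsymbol\mu_n)=F\cdot\F_r(\boldsymbol\mu_n)$, and $F(\boldsymbol\mu_n)/F$ is Galois (being generated by the roots of the separable polynomial $X^n-1$). The standard natural restriction map
\[
\gal{F(\boldsymbol\mu_n)}{F}\longrightarrow\gal{\F_r(\boldsymbol\mu_n)}{F\cap\F_r(\boldsymbol\mu_n)}
\]
is then an isomorphism.

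Next I would show that $F\cap\F_r(\boldsymbol\mu_n)=\F_r$. Every element of $\F_r(\boldsymbol\mu_n)$ is algebraic over $\F_r$, so $F\cap\F_r(\boldsymbol\mu_n)$ consists of elements of $F$ that are algebraic over $\F_r$. Since $\F_r$ is the field of constants of $F$ (i.e., $\F_r$ is algebraically closed in $F$, as recalled in the introduction using \cite[Ch.~3, Corollary~2.14~(d)]{li}), any such element must already lie in $\F_r$. Hence $F\cap\F_r(\boldsymbol\mu_n)=\F_r$.

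Combining these two steps yields
\[
\gal{F(\boldsymbol\mu_n)}{F}\cong\gal{\F_r(\boldsymbol\mu_n)}{\F_r}=H_n,
\]
which is the claim. The whole argument is short and formal; the only conceptual point is the constancy of $\F_r$ in $F$, which is built into the setup of the paper, so I do not anticipate any real obstacle here.
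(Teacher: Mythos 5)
Your proof is correct and follows essentially the same route as the paper's: both establish $F\cap\F_r(\boldsymbol\mu_n)=\F_r$ from the fact that $\F_r$ is the exact field of constants of $F$, and then invoke the standard compositum/linear-disjointness isomorphism of Galois theory. You have simply spelled out the intermediate steps that the paper leaves compressed.
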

\begin{proof} Since $\F_r$ is algebraically closed in $F$ it follows that
\[ F\cap\F_r(\boldsymbol\mu_n)=\F_r, \]
i.e. $F$ and $\F_r(\boldsymbol\mu_n)$ are linearly disjoint over $\F_r$. Thus
\[ \gal{F(\boldsymbol\mu_n)}{F}=\gal{\F_r(\boldsymbol\mu_n)}{\F_r}, \]
whence the claim. \end{proof}
In the sequel let $\F_\ell$ be the field with $\ell$ elements. Now we can prove
\begin{pro} \label{hor-control-pro}
The equality $\gal{F(E[\ell])}{F}=\Gamma_\ell$ holds for almost all primes $\ell\not=p$.
\end{pro}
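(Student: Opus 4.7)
My plan is to verify both inclusions in $\gal{F(E[\ell])}{F}=\Gamma_\ell$; one of them, namely $\gal{F(E[\ell])}{F}\subset\Gamma_\ell$, is already supplied by Proposition \ref{inclusion-gamma-pro}. Reading off the definition of $\Gamma_\ell$ from the exact sequence \eqref{gamma-n-eq}, I would reduce the reverse inclusion to two basis-independent statements, each holding for all but finitely many $\ell\neq p$: (a) the determinant $\det\colon\gal{F(E[\ell])}{F}\to H_\ell$ is surjective, and (b) $\gal{F(E[\ell])}{F}$ contains $SL_2(\F_\ell)$.

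Step (a) should in fact go through for every $\ell\neq p$ without exception. The Weil pairing forces $\boldsymbol\mu_\ell\subset F(E[\ell])$, so restriction produces a surjection $\gal{F(E[\ell])}{F}\twoheadrightarrow\gal{F(\boldsymbol\mu_\ell)}{F}$; by Lemma \ref{weil-equality-lem} this restriction agrees, under the identification of $H_\ell$ with $\gal{\F_r(\boldsymbol\mu_\ell)}{\F_r}$, with the determinant, and by Lemma \ref{lin-disj-lem} its target is precisely $H_\ell$.

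For step (b) I would exploit the Tate uniformization at $\infty$, which is available after the reductions of \S \ref{split-multiplicative-subsec}. Write $q=\pi^e u\in F_\infty^\times$ with $e:=v_\infty(q)>0$ for the Tate period of $E$. For any prime $\ell\neq p$ with $\ell\nmid e$, Corollary \ref{tate-representation-cor}~(ii) applied with $m=1$ deposits inside the decomposition subgroup $\gal{F_\infty(E[\ell])}{F_\infty}\hookrightarrow\gal{F(E[\ell])}{F}$ an element represented by the transvection $\bigl(\begin{smallmatrix}1&1\\0&1\end{smallmatrix}\bigr)$ with respect to the Tate basis $\{\zeta_\ell,q^{1/\ell}\}$ of $E[\ell]$. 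Throwing out, in addition, the finitely many primes $\ell$ for which $E[\ell]$ is $G_F$-reducible (excluded by Theorem \ref{irreducibility-thm}~(i)), Lemma \ref{H-open-lem} applied to $R=\F_\ell$ (so the ideal $I$ is all of $\F_\ell$ and ``open in $SL_2(\F_\ell)$'' means the full $SL_2(\F_\ell)$) forces $SL_2(\F_\ell)\subset\gal{F(E[\ell])}{F}$, as desired.

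Combining (a) and (b) with the exact sequence \eqref{gamma-n-eq} will give the desired equality $\gal{F(E[\ell])}{F}=\Gamma_\ell$ on the complement of a finite set of primes (those dividing $pe$ together with the few for which $E[\ell]$ fails to be irreducible). The main subtlety I anticipate is the bookkeeping with bases: the transvection produced by Tate uniformization is expressed in the local basis $\{\zeta_\ell,q^{1/\ell}\}$, whereas $\bar\rho_{E,\ell}$ is written in the globally fixed basis chosen at the outset. This will be harmless, because both properties I actually need, \emph{``the image contains $SL_2(\F_\ell)$''} and \emph{``the determinant surjects onto $H_\ell$''}, depend only on the underlying abstract subgroup of $GL_2(\F_\ell)$; Lemma \ref{H-open-lem} is tailored precisely to extract conclusions of this basis-independent form.
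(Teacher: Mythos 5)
Your proof is correct and follows essentially the same route as the paper: the transvection $\bigl(\begin{smallmatrix}1&1\\0&1\end{smallmatrix}\bigr)$ coming from the Tate uniformization at $\infty$ (Corollary \ref{tate-representation-cor}~ii), combined with the mod-$\ell$ irreducibility of Theorem \ref{irreducibility-thm}~i and Lemma \ref{H-open-lem}, forces $SL_2(\F_\ell)\subset\gal{F(E[\ell])}{F}$ for almost all $\ell\neq p$, and the Weil pairing together with Lemma \ref{lin-disj-lem} supplies the exact sequence onto $H_\ell$. The only cosmetic difference is that the paper does not invoke Proposition \ref{inclusion-gamma-pro} explicitly, since the equality $\gal{F(E[\ell])}{F}=\Gamma_\ell$ drops out at once from the exact sequence it builds; your two-inclusion framing and the brief remark about basis-independence are both sound.
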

\begin{proof} First we show that $\gal{F(E[\ell])}{F}$ contains $SL_2(\F_\ell)$ for almost all primes $\ell\not=p$. To begin with, there is a natural embedding
\[ \gal{F_\infty(E[\ell])}{F_\infty}\;\longmono\;\gal{F(E[\ell])}{F} \]
which we interpret as an inclusion, so that we view the former group as a subgroup of the latter. As in \S \ref{tate-subsec}, write $q$ for the Tate period of $E$ at $\infty$, so $E(\bar{F}_\infty)\cong\bar{F}^\times_\infty/\langle q\rangle$ as Galois modules. Now let $\ell$ be a prime different from $p$ not dividing $e=v_\infty(q)$. By part \emph{ii)} of Corollary \ref{tate-representation-cor}, there exists $\sigma\in\gal{F_\infty(E[\ell])}{F_\infty}$ which is represented by the matrix $\bigl(\begin{smallmatrix}1&1\\0&1\end{smallmatrix}\bigr)$ with respect to a suitable $\F_\ell$-basis of $E[\ell]$. But part \emph{i)} of Theorem \ref{irreducibility-thm} says that $E[\ell]$ is an irreducible $\gal{F(E[\ell])}{F}$-module for almost all primes $\ell\not=p$, so our claim follows from Lemma \ref{H-open-lem}.

Now let $\ell\not=p$ be a prime such that $\gal{F(E[\ell])}{F}$ contains $SL_2(\F_\ell)$. As noticed before, one knows that $\boldsymbol\mu_\ell\subset F(E[\ell])$ and that the Galois action on the roots of unity is given by the determinant (Lemma \ref{weil-equality-lem}), so Lemma \ref{lin-disj-lem} ensures that $\gal{F(E[\ell])}{F}$ fits into a short exact sequence
\[ 0\longrightarrow SL_2(\F_\ell)\longrightarrow\gal{F(E[\ell])}{F} \xrightarrow{\text{det}}H_\ell\longrightarrow0. \]
By definition of $\Gamma_\ell$, the proposition is proved. \end{proof}
\begin{rem}
In their paper \cite{ch}, Cojocaru and Hall give a uniform version of Proposition \ref{hor-control-pro}. More precisely, they show that there exists a positive constant $c(F)$, depending at most on the genus of $\cC$, such that $\gal{F(E[\ell])}{F}=\Gamma_\ell$ for any non-isotrivial elliptic curve $E_{/F}$ and any prime number $\ell\geq c(F)$, $\ell\not=p$. Moreover, they determine an explicit expression for $c(F)$: see \cite[Theorem 1]{ch}.
\end{rem}
Now we state an auxiliary result that will be applied in various occasions later on. By defining it componentwise in the obvious manner, consider the determinant map
\[ \det:\hat\Gamma\longrightarrow\hat{H}. \]
This map is the one that appears in \eqref{gamma-hat-eq}. By a slight abuse of notation, we denote in the same way both the determinant map on  $\hat\Gamma$ and the analogous maps on the $\hat\Gamma_\ell$.
\begin{lem} \label{surj-det-lem}
The following hold:
\begin{itemize}
\item[\emph{i)}] $\det(\rho_{E,\ell}(G_F))=\hat{H}_\ell$ for all primes $\ell\not=p$;
\item[\emph{ii)}] $\det(\rho_E(G_F))=\hat{H}$.
\end{itemize}
\end{lem}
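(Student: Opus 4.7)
My plan is to reduce both parts of the lemma to a direct comparison between the determinant of the Galois representation on torsion and the action of Galois on roots of unity, via the Weil pairing. The point is that Lemma \ref{weil-equality-lem} identifies the action of any $\sigma \in \gal{F(E[n])}{F}$ on a primitive $n$th root of unity $\zeta_n \in F(E[n])$ with raising to the power $\det \bar\rho_{E,n}(\sigma)$, so the image of the composition
\[
G_F \xrightarrow{\bar\rho_{E,n}} \mathrm{Aut}(E[n]) \xrightarrow{\det} (\Z/n\Z)^\times
\]
coincides with the image of $G_F \to \gal{F(\boldsymbol\mu_n)}{F}$. This last image is $H_n$ by Lemma \ref{lin-disj-lem}. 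Since Proposition \ref{inclusion-gamma-pro} already provides the containment $\det \bar\rho_{E,n}(G_F) \subset H_n$, we obtain the equality $\det \bar\rho_{E,n}(G_F) = H_n$ for every integer $n \geq 1$ prime to $p$.

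For part (i), specialize the preceding equality to $n = \ell^k$ and pass to the inverse limit over $k$. The Galois image $\rho_{E,\ell}(G_F)$ is a closed (hence compact) subgroup of $GL_2(\Z_\ell)$, so its continuous image under $\det$ is closed in $\Z_\ell^\times$ and surjects onto every finite quotient $H_{\ell^k}$; therefore it equals $\varprojlim_k H_{\ell^k} = \hat H_\ell$, as claimed.

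For part (ii), I would apply the same argument but now letting $n$ range over all integers prime to $p$. Compactness of $\rho_E(G_F)$ inside $\prod_{\ell \neq p} GL_2(\Z_\ell)$ together with surjectivity of $\det$ onto each $H_n$ forces the image of $\det$ to be the closed subgroup $\varprojlim_{(n,p)=1} H_n = \hat H$ of $\hat\Z_{(p)}^\times$. The one point requiring a small amount of care is that $\hat H$ is strictly smaller than $\prod_{\ell \neq p} \hat H_\ell$ in general, but this is automatically accounted for: the determinant image of $\rho_E(G_F)$ is forced into $\hat H$ because every simultaneous quotient must lie in $H_n$, and these $H_n$ are the finite quotients of $\hat H$, not of the product of the $\hat H_\ell$. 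Hence no obstacle arises here; the statement is really just a repackaging of Lemmas \ref{weil-equality-lem} and \ref{lin-disj-lem} with a compactness argument to pass from finite levels to the projective limit.
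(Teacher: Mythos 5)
Your proof is correct and follows essentially the same route as the paper's: Lemma \ref{weil-equality-lem} identifies the determinant of $\rho_E$ with the cyclotomic character, and Lemma \ref{lin-disj-lem} computes the cyclotomic image; passing to the limit (whether by compactness as you do, or by taking the cyclotomic character mod $\ell^\infty$ directly as the paper does) gives the result. The invocation of Proposition \ref{inclusion-gamma-pro} to get the containment is redundant, since the identification of the determinant map with the surjection $G_F \twoheadrightarrow \gal{F(\boldsymbol\mu_n)}{F} = H_n$ already yields the equality outright.
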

\begin{proof} \emph{i)} If $\ell$ is a prime different from $p$, it is an immediate consequence of Lemma \ref{weil-equality-lem} (cf. also \cite[Ch. I, \S 1.2]{se2}) that
\[ \det(\rho_{E,\ell}):G_F\longrightarrow\Z^\times_\ell \]
coincides with the cyclotomic character giving the action of $G_F$ on the $\ell^\infty$th roots of unity. It follows that $\det(\rho_{E,\ell}(G_F))$ identifies with the Galois group $\gal{F(\boldsymbol\mu_{\ell^\infty})}{F}$ where $F(\boldsymbol\mu_{\ell^\infty})$ is the extension of $F$ generated by all roots of unity of order a power of $\ell$. On the other hand, by setting $n=\ell^m$ and passing to the projective limit over $m$ in Lemma \ref{lin-disj-lem} we get that
\[ \gal{F(\boldsymbol\mu_{\ell^\infty})}{F}=\hat{H}_\ell, \]
whence our claim.

Part \emph{ii)} can be proved in exactly the same way, this time working with all roots of unity of prime-to-$p$ order. \end{proof}
\begin{rem}
Lemma \ref{surj-det-lem} is valid for all elliptic curves $E_{/F}$, including isotrivial ones.
\end{rem}
\begin{pro} \label{hor-control-pro2}
For all primes $\ell\not=p$ the group $\gal{F(E[\ell^\infty])}{F}$ is open in $\hat\Gamma_\ell$.
\end{pro}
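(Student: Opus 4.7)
My plan is to reduce the openness of $H:=\rho_{E,\ell}(G_F)$ in the profinite group $\hat\Gamma_\ell$ to the assertion that $H$ has finite index in $\hat\Gamma_\ell$. This reduction is automatic because $H$ is closed (it is the continuous image of the compact group $G_F$, hence compact and therefore closed in the Hausdorff topological group $\hat\Gamma_\ell$), and a closed subgroup of finite index in a profinite group is open.

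To produce the required finite-index statement, I would first apply Lemma \ref{H-open-lem} with $R=\Z_\ell$ to show that $H\cap SL_2(\Z_\ell)$ is open in $SL_2(\Z_\ell)$. The two hypotheses are already in place: irreducibility of the action of $H$ on $Q(\Z_\ell)^2=\Q_\ell^2=V_\ell(E)$ is exactly part \emph{ii)} of Theorem \ref{irreducibility-thm}, while the presence in $H$ of a subgroup of the form $\bigl(\begin{smallmatrix}1&I\\0&1\end{smallmatrix}\bigr)$ with $I=\ell^{v_\ell(e)}\Z_\ell\neq 0$ (where $e=v_\infty(q)$) is the content of part \emph{iii)} of Corollary \ref{tate-representation-cor}. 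Lemma \ref{H-open-lem} then produces an open subgroup of $SL_2(\Z_\ell)$ sitting inside $H\cap SL_2(\Z_\ell)$, so the latter is open in $SL_2(\Z_\ell)$ and thus of finite index.

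To conclude, I would combine this with part \emph{i)} of Lemma \ref{surj-det-lem}, namely $\det(H)=\hat{H}_\ell$. Comparing the two short exact sequences
\begin{equation*}
0\longrightarrow H\cap SL_2(\Z_\ell)\longrightarrow H \xrightarrow{\det} \hat{H}_\ell \longrightarrow 0
\end{equation*}
and
\begin{equation*}
0\longrightarrow SL_2(\Z_\ell)\longrightarrow\hat\Gamma_\ell \xrightarrow{\det} \hat{H}_\ell \longrightarrow 0,
\end{equation*}
a routine coset count (exploiting the fact that both rows surject onto the same quotient $\hat{H}_\ell$) yields the equality $[\hat\Gamma_\ell:H]=[SL_2(\Z_\ell):H\cap SL_2(\Z_\ell)]<\infty$. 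Hence $H$ is a closed subgroup of finite index in $\hat\Gamma_\ell$, therefore open, which is the claim.

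The only delicate point is the bookkeeping with bases in the application of Lemma \ref{H-open-lem}: the explicit matrix $\bigl(\begin{smallmatrix}1&\ell^{v_\ell(e)}\\0&1\end{smallmatrix}\bigr)$ produced by Corollary \ref{tate-representation-cor} is written with respect to the Tate basis $\{\zeta_{\ell^m},q^{1/\ell^m}\}$, whereas $H$ is originally defined relative to the fixed $\Z_\ell$-basis of $T_\ell(E)$ used to set up $\rho_{E,\ell}$. A $GL_2(\Z_\ell)$-conjugation reconciles the two, and since irreducibility, the presence of an upper-triangular unipotent ideal of scalars, and openness are all conjugation-invariant, this causes no genuine obstruction.
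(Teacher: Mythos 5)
Your proof is correct and follows essentially the same route as the paper's: both invoke Corollary \ref{tate-representation-cor}(iii) for the unipotent subgroup, Theorem \ref{irreducibility-thm}(ii) for irreducibility, Lemma \ref{H-open-lem} to get an open subgroup of $SL_2(\Z_\ell)$ inside the image, and then Lemma \ref{surj-det-lem}(i) together with the two short exact sequences to pass from finite index in $SL_2(\Z_\ell)$ to finite index (hence openness, by closedness) in $\hat\Gamma_\ell$. The remark about reconciling the Tate basis with the chosen basis of $T_\ell(E)$ via a $GL_2(\Z_\ell)$-conjugation is a worthwhile point of care that the paper leaves implicit.
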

\begin{proof} Let $\ell\not=p$ be a prime. From the theory of Tate's uniformization (see part \emph{iii)} of Corollary \ref{tate-representation-cor}) we know that
\[ \begin{pmatrix}1&\ell^n\Z_\ell\\0&1\end{pmatrix}\subset\rho_{E,\ell}(G_F)=\gal{F(E[\ell^\infty])}{F}\subset GL_2(\Z_\ell) \]
for $n=v_\ell(e)$ and $e=-v_\infty(j(E))$. But $V_\ell(E)$ is an irreducible $\rho_{E,\ell}(G_F)$-module by part \emph{ii)} of Theorem \ref{irreducibility-thm}, hence $\rho_{E,\ell}(G_F)$ contains an open subgroup of $SL_2(\Z_\ell)$ by Lemma \ref{H-open-lem}. To prove the proposition one can proceed as follows. As a consequence of part \emph{i)} of Lemma \ref{surj-det-lem}, there is a commutative diagram of short exact sequences
\begin{equation} \label{sequences-diag-eq1}
\xymatrix@R=20pt{0\ar[r] & W \ar[r]\ar@{^{(}->}[d] & \rho_{E,\ell}(G_F) \ar[r]^-{\det}\ar@{^{(}->}[d] & \hat{H}_\ell \ar[r]\ar@{=}[d] & 0\\
          0\ar[r] & SL_2(\Z_\ell) \ar[r] & \hat\Gamma_\ell \ar[r]^-{\det} & \hat{H}_\ell \ar[r] & 0}
\end{equation}
where $W:=\rho_{E,\ell}(G_F)\cap\ker(\det)$ and the vertical maps are inclusions. Since $\ker(\det)=SL_2(\Z_\ell)$ and $W$ is an open subgroup of $SL_2(\Z_\ell)$, it follows that $W$ is of finite index in $SL_2(\Z_\ell)$ because this matrix group is compact. But then the exact sequence between the cokernels of the vertical maps in \eqref{sequences-diag-eq1} shows that $\rho_{E,\ell}(G_F)$ has finite index in $\hat\Gamma_\ell$, and this suffices to prove our claim because $\rho_{E,\ell}(G_F)$ is closed in $\hat\Gamma_\ell$. \end{proof}
We conclude this $\S$ with the following
\begin{pro} \label{open-pro}
Let $S$ be a finite set of primes not containing $p$ and let
\[ \hat\Gamma_S:=\prod_{\ell\in S}\hat\Gamma_\ell. \]
Moreover, denote $E[S^\infty]$ the group of points of $E$ of order divisible only by primes in $S$. Then $\gal{F(E[S^\infty])}{F}$ is open in $\hat\Gamma_S$.
\end{pro}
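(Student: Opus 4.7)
The plan is to reduce the statement to the single-prime case (Proposition \ref{hor-control-pro2}) by exploiting the fact that $F(E[S^\infty])$ is the compositum of the fields $F(E[\ell^\infty])$ for $\ell\in S$, and that the Galois groups of these fields contain open pro-$\ell$ subgroups for distinct primes $\ell$, hence are asymptotically independent in a precise sense.

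First I would observe that $F(E[S^\infty])=\prod_{\ell\in S}F(E[\ell^\infty])$, so restriction induces a natural injection
\[ \iota:\gal{F(E[S^\infty])}{F}\;\longmono\;\prod_{\ell\in S}\gal{F(E[\ell^\infty])}{F}. \]
The right-hand side is open in $\hat\Gamma_S$ by Proposition \ref{hor-control-pro2}, so it suffices to show that $\iota$ has image of finite index.

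Next, for each $\ell\in S$ I would extract from Proposition \ref{hor-control-pro2} an open pro-$\ell$ subgroup $U_\ell$ of $\gal{F(E[\ell^\infty])}{F}$: since openness ensures that $\gal{F(E[\ell^\infty])}{F}\cap SL_2(\Z_\ell)$ is open in $SL_2(\Z_\ell)$, it contains a sufficiently deep principal congruence subgroup $\Gamma(\ell^{N_\ell})$, which is pro-$\ell$. Writing $M_\ell$ for the fixed field of $U_\ell$ inside $F(E[\ell^\infty])$ and setting $M:=\prod_{\ell\in S}M_\ell$, a finite extension of $F$, the group $G_M:=\gal{F(E[S^\infty])}{M}$ injects via restriction into $\prod_{\ell\in S}\gal{F(E[\ell^\infty])}{M\cap F(E[\ell^\infty])}$, with surjective projection onto each factor by standard Galois theory; each factor, being a closed subgroup of $U_\ell$, is itself pro-$\ell$.

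The decisive step, which I expect to be the main technical point, is a Goursat-type lemma applied to $\prod_{\ell\in S}\gal{F(E[\ell^\infty])}{M\cap F(E[\ell^\infty])}$: a closed subgroup of a finite product $\prod_i P_i$ which projects surjectively onto each $P_i$ must equal the whole product whenever the $P_i$ are pro-$\ell_i$ for pairwise distinct primes $\ell_i$, since any nontrivial common finite quotient of $P_i$ and $P_j$ would be simultaneously an $\ell_i$- and $\ell_j$-group and hence trivial. Once this is in place, $G_M$ coincides with the full product, which is open in $\prod_{\ell\in S}\gal{F(E[\ell^\infty])}{F}$; as $[G:G_M]\leq[M:F]<\infty$, the group $G=\gal{F(E[S^\infty])}{F}$ also has finite index in this product and is therefore open in $\hat\Gamma_S$, as required.
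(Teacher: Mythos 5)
Your overall strategy is essentially the paper's: pass to a finite extension over which each $\gal{\cdot(E[\ell^\infty])}{\cdot}$ becomes pro-$\ell$, invoke the mutual independence of pro-$\ell$ groups for distinct $\ell$ (your Goursat-type lemma is the paper's linear disjointness of the fields $L(E[\ell^\infty])$), deduce the full product decomposition, and conclude openness. The paper implements this by taking $L:=F(E[m])$ with $m:=\prod_{\ell\in S}\ell$, so that each $\gal{L(E[\ell^\infty])}{L}$ sits inside $\gal{F(E[\ell^\infty])}{F(E[\ell])}$, which lies in the kernel of reduction mod $\ell$ in $GL_2(\Z_\ell)$ and is therefore pro-$\ell$ and open.

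There is, however, a genuine slip in your construction of the auxiliary subgroups $U_\ell$. You extract $\Gamma(\ell^{N_\ell})\subset SL_2(\Z_\ell)$ and call it an \emph{open} pro-$\ell$ subgroup of $\gal{F(E[\ell^\infty])}{F}$. It is not open there: by Lemma \ref{surj-det-lem} the group $\gal{F(E[\ell^\infty])}{F}$ surjects onto the infinite group $\hat{H}_\ell$ via the determinant, whereas $\Gamma(\ell^{N_\ell})$ has trivial determinant, so it has infinite index in $\gal{F(E[\ell^\infty])}{F}$ and its fixed field $M_\ell$ is an infinite extension of $F$. With that $M$ your finite-index conclusion $[G:G_M]\leq[M:F]<\infty$ collapses. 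You appear to have conflated ``open in $SL_2(\Z_\ell)$'' with ``open in $\gal{F(E[\ell^\infty])}{F}$''.

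The repair is immediate and is exactly what the paper's choice of $L$ accomplishes: take $U_\ell:=\gal{F(E[\ell^\infty])}{F(E[\ell])}$, i.e., the kernel of the reduction-mod-$\ell$ map restricted to the Galois group. This is open (its quotient is the finite group $\gal{F(E[\ell])}{F}$) and pro-$\ell$ (it injects into $\ker\bigl(GL_2(\Z_\ell)\to GL_2(\F_\ell)\bigr)$), and its fixed field is $F(E[\ell])$, so $M=F(E[m])=L$. With this correction the rest of your argument goes through and reproduces the paper's proof.
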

\begin{proof} If $L/F$ is a finite (separable) extension then $\gal{L(E[S^\infty])}{L}$ canonically identifies with the Galois group of $F(E[S^\infty])$ over $F(E[S^\infty])\cap L$, so it can naturally be viewed as an open subgroup of $\gal{F(E[S^\infty])}{F}$. Let $m$ be the product of the primes in $S$ and set $L:=F(E[m])$. Then, as $\ell$ varies in $S$, the extensions $L(E[\ell^\infty])/L$ are pro-$\ell$, thus the fields $L(E[\ell^\infty])$ are pairwise linearly disjoint over $L$. It follows that
\begin{equation} \label{gal-prod-eq}
\gal{L(E[S^\infty])}{L}=\prod_{\ell\in S}\gal{L(E[\ell^\infty])}{L}.
\end{equation}
But the same reasoning as above shows that, for all $\ell$, $\gal{L(E[\ell^\infty])}{L}$ is an open subgroup of $\gal{F(E[\ell^\infty])}{F}$, and then Proposition \ref{hor-control-pro2} implies that $\gal{L(E[\ell^\infty])}{L}$ is open in $\hat\Gamma_\ell$. We readily deduce from \eqref{gal-prod-eq} that $\gal{L(E[S^\infty])}{L}$ is open in $\hat\Gamma_S$, hence $\gal{F(E[S^\infty])}{F}$ is open in $\hat\Gamma_S$ as well. \end{proof}

\subsection{Galois groups: vertical control} \label{vertical-subsec}

In this short $\S$ we take a closer look at the Galois group of $F(E[\ell^\infty])$ over $F$ for a prime number $\ell\not=p$. In order to do this, we need to introduce an algebraic notion that will prove extremely useful.

Let $X$ be a profinite group and let $\Sigma$ be a finite simple group. Following \cite[Ch. IV, \S 3.4]{se2}, we say that $\Sigma$ \emph{occurs} in $X$ if there exist closed subgroups $X_1$, $X_2$ of $X$ such that $X_2$ is normal in $X_1$ and $X_1/X_2\cong\Sigma$.
\begin{lem} \label{occurrence-lem}
With $X$ and $\Sigma$ as above, the following hold:
\begin{itemize}
\item[\emph{i)}] let $Y$ be a closed normal subgroup of $X$; if $\Sigma$ occurs in $X$ then $\Sigma$ occurs in either $Y$ or $X/Y$;
\item[\emph{ii)}] if $X=\varprojlim X/\Omega_\alpha$ with $\Omega_\alpha$ open in $X$ for all $\alpha$ then $\Sigma$ occurs in $X$ if and only if $\Sigma$ occurs in $X/\Omega_\alpha$ for some $\alpha$.
\end{itemize}
\end{lem}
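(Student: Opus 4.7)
The key observation is that $\Sigma$ being simple means any normal subgroup of $\Sigma$ is either trivial or the whole of $\Sigma$, and any quotient of $\Sigma$ is either $\Sigma$ or trivial. Both parts of the lemma reduce to this dichotomy combined with routine applications of the isomorphism theorems; the profinite hypothesis enters only in part (ii), through the fact that a closed subgroup of $X$ coincides with the intersection of its ``$\Omega_\alpha$-saturations''.

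For part (i), suppose $X_2\triangleleft X_1\leq X$ are closed with $X_1/X_2\cong\Sigma$, and set $A:=X_1\cap Y$. Since $Y$ is normal in $X$, the group $A$ is normal in $X_1$, hence $AX_2/X_2$ is a normal subgroup of $X_1/X_2\cong\Sigma$. By simplicity it is either all of $\Sigma$ or trivial. In the first case, $A/(A\cap X_2)\cong AX_2/X_2\cong\Sigma$; as $A\subseteq Y$ and $A\cap X_2$ is normal in $A$, this exhibits $\Sigma$ as a subquotient of $Y$. In the second case $A\subseteq X_2$, and a short computation shows $X_1\cap X_2Y=X_2$: indeed, any $x=x_2y\in X_1$ with $x_2\in X_2$ and $y\in Y$ has $y=x_2^{-1}x\in X_1\cap Y=A\subseteq X_2$. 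Consequently, the closed subgroups $X_2Y/Y\triangleleft X_1Y/Y$ of $X/Y$ satisfy $(X_1Y/Y)/(X_2Y/Y)\cong X_1/(X_1\cap X_2Y)=X_1/X_2\cong\Sigma$, so $\Sigma$ occurs in $X/Y$.

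For part (ii) the direction $(\Leftarrow)$ is immediate: given closed subgroups $\bar X_2\triangleleft\bar X_1\leq X/\Omega_\alpha$ with quotient $\Sigma$, their preimages under the projection $X\twoheadrightarrow X/\Omega_\alpha$ witness $\Sigma$ in $X$. For $(\Rightarrow)$, let $X_2\triangleleft X_1$ witness the occurrence in $X$, and look at the images $X_i\Omega_\alpha/\Omega_\alpha$ in $X/\Omega_\alpha$: their quotient is $X_1/(X_1\cap X_2\Omega_\alpha)$, which is a quotient of $\Sigma=X_1/X_2$ and hence by simplicity equals either $\Sigma$ or the trivial group. If it were trivial for every $\alpha$ we would have $X_1\subseteq\bigcap_\alpha X_2\Omega_\alpha$; but $\{\Omega_\alpha\}$ is a fundamental system of open neighbourhoods of the identity in $X$, and since $X_2$ is closed, $\bigcap_\alpha X_2\Omega_\alpha=X_2$. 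This forces $X_1\subseteq X_2$, contradicting $X_1/X_2\cong\Sigma\ne1$. Therefore some $\alpha$ witnesses the occurrence downstairs.

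The only nontrivial step is the topological one just invoked, namely that a closed subgroup of a profinite group equals the intersection of its open saturations with respect to a fundamental system of open neighbourhoods of the identity. All remaining ingredients are formal consequences of simplicity and the standard isomorphism theorems, so I expect no other real obstacle.
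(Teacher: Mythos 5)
Your proof is correct. For part (i) it is essentially identical to the paper's argument: you both consider the image of $X_1\cap Y$ in $X_1/X_2$ (you write it as $(X_1\cap Y)X_2/X_2$, the paper as $(X_1\cap Y)/(X_2\cap Y)$, but these agree by the second isomorphism theorem), invoke simplicity of $\Sigma$ for the dichotomy, and handle the two cases in the same way. For part (ii) you take a slight variant route on the forward implication: the paper argues directly, using that $X_2$ is open in $X_1$ (finite quotient) to pick a specific $\alpha$ with $X_1\cap\Omega_\alpha\subseteq X_2$, so that the image of $X_1$ in $X/\Omega_\alpha$ already contains $\Sigma$ as a quotient; you instead argue by contradiction, observing that if the image quotient were trivial for every $\alpha$ then $X_1\subseteq\bigcap_\alpha X_2\Omega_\alpha$, which equals the closed subgroup $X_2$. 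Both arguments hinge on the same topological fact (that $\{\Omega_\alpha\}$ is a fundamental system of open neighbourhoods of the identity), and both are fine; the paper's is marginally more constructive in that it exhibits the witnessing $\alpha$ explicitly, while yours has the small advantage of not needing to note separately that $X_2$ is open in $X_1$. One minor point worth making explicit in your case 2 of part (i): the normality of $X_2Y/Y$ in $X_1Y/Y$ is most cleanly seen by transporting everything through the isomorphism $X_1Y/Y\cong X_1/(X_1\cap Y)$ after having established $X_1\cap X_2Y=X_2$, rather than by a direct conjugation computation on $X_2Y$ inside $X_1Y$.
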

\begin{proof} \emph{i)} Let $X_1$, $X_2$ be closed subgroups of $X$ with $X_2$ normal in $X_1$ and $X_1/X_2\cong\Sigma$. Consider the composite map
\[ X_1\cap Y \;\longmono\; X_1 \;\longepi\; X_1/X_2, \]
which has $X_2\cap Y$ as its kernel. Then it is easy to see that $(X_1\cap Y)/(X_2\cap Y)$ is a normal subgroup of $X_1/X_2$, which is simple by assumption. Hence there are two possibilities:
\begin{itemize}
\item[{\bf 1.}] $(X_1\cap Y)\big/(X_2\cap Y)\cong X_1/X_2\cong\Sigma$: in this case $\Sigma$ occurs in $Y$;
\item[{\bf 2.}] $(X_1\cap Y)\big/(X_2\cap Y)=0$, i.e. $X_1\cap Y=X_2\cap Y$: then $\Sigma$ occurs in $X/Y$ because
\[ \Sigma\cong X_1/X_2\cong(X_1/X_1\cap Y)\big/(X_2/X_2\cap Y)\cong(X_1Y/Y)\big/(X_2Y/Y). \]
\end{itemize}

\emph{ii)} The ``if'' part is easy. For the other implication, observe that the family $\{\Omega_\alpha\}_\alpha$ is a basis of neighbourhoods of the identity in $X$. Let $X_1,X_2\subset X$ be closed subgroups with $X_2$ normal in $X_1$ and $X_1/X_2\cong\Sigma$. Since $\Sigma$ is finite, $X_2$ is open in $X_1$, hence there exists an index $\alpha$ such that $X_1\cap\Omega_\alpha\subset X_2$ and, in particular, $X_1\cap\Omega_\alpha=X_2\cap\Omega_\alpha$. There are closed injections
\[ X_2/(X_2\cap\Omega_\alpha)\;\longmono\;X_1/(X_1\cap\Omega_\alpha)\;\longmono\;X/\Omega_\alpha \]
with $X_2/(X_2\cap\Omega_\alpha)$ normal in $X_1/(X_1\cap\Omega_\alpha)$, hence $\Sigma$ occurs in $X/\Omega_\alpha$. \end{proof}
As usual, set $PSL_2(\F_\ell):=SL_2(\F_\ell)/\{\pm1\}$ for all primes $\ell$. It is well known that these groups are simple for $\ell\geq5$ (see, e.g., \cite[Ch. XIII, Theorem 8.4]{la2}), and it is easy to see that they are pairwise non-isomorphic; a common theme of the next proposition and of the proof of Proposition \ref{sl2-pro} below will be the study of their occurrences in suitable profinite groups.
\begin{pro} \label{vert-control-pro}
The group $PSL_2(\F_\ell)$ occurs in $\gal{F(E[\ell^\infty])}{F}$ for almost all $\ell\not=p$.
\end{pro}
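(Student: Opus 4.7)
The plan is to reduce the problem directly to Proposition \ref{hor-control-pro}, which asserts that $\gal{F(E[\ell])}{F} = \Gamma_\ell$ for almost all primes $\ell \neq p$. Restricting to such primes (and also requiring $\ell \geq 5$, which excludes only finitely many primes so that $PSL_2(\F_\ell)$ is genuinely simple and the notion of ``occurrence'' applies), I would argue as follows.

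First, the defining exact sequence \eqref{gamma-n-eq}, namely
\[
0 \longrightarrow SL_2(\F_\ell) \longrightarrow \Gamma_\ell \xrightarrow{\det} H_\ell \longrightarrow 0,
\]
exhibits $SL_2(\F_\ell)$ as a closed (normal) subgroup of $\Gamma_\ell = \gal{F(E[\ell])}{F}$. Since $\{\pm 1\}$ is a normal subgroup of $SL_2(\F_\ell)$ (it is the center), the pair $X_1 := SL_2(\F_\ell)$, $X_2 := \{\pm 1\}$ witnesses an occurrence of $PSL_2(\F_\ell) = SL_2(\F_\ell)/\{\pm 1\}$ inside $\gal{F(E[\ell])}{F}$ in the sense introduced just above the lemma.

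Second, I would lift this occurrence to $\gal{F(E[\ell^\infty])}{F}$ using the continuous surjection
\[
\pi : \gal{F(E[\ell^\infty])}{F} \longtwoheadrightarrow \gal{F(E[\ell])}{F}
\]
coming from restriction. Setting $\widetilde{X}_i := \pi^{-1}(X_i)$ for $i = 1,2$, both are closed subgroups of $\gal{F(E[\ell^\infty])}{F}$ with $\widetilde{X}_2$ normal in $\widetilde{X}_1$, and the projection $\pi$ induces an isomorphism
\[
\widetilde{X}_1 / \widetilde{X}_2 \;\cong\; X_1/X_2 \;\cong\; PSL_2(\F_\ell),
\]
which is precisely what it means for $PSL_2(\F_\ell)$ to occur in $\gal{F(E[\ell^\infty])}{F}$.

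Since everything relies only on the already-established Proposition \ref{hor-control-pro} and an elementary pull-back argument along a quotient map, I do not anticipate any substantive obstacle here; the statement is essentially a formal consequence of the horizontal control result, packaged in the language of occurrences that will be needed for the subsequent arguments.
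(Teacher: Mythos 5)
Your argument is correct and follows essentially the same route as the paper: both deduce from Proposition \ref{hor-control-pro} that $PSL_2(\F_\ell)$ occurs in $\gal{F(E[\ell])}{F}$ for almost all $\ell\neq p$, and then lift this to $\gal{F(E[\ell^\infty])}{F}$. Where the paper invokes the ``if'' direction of part \emph{ii)} of Lemma \ref{occurrence-lem}, you simply prove that easy direction inline via preimages under the restriction map, which is the same content.
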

\begin{proof} We know that
\[ \gal{F(E[\ell^\infty])}{F}=\varprojlim_n\gal{F(E[\ell^n])}{F}. \]
Moreover, by Proposition \ref{hor-control-pro} the group $\gal{F(E[\ell])}{F}$ contains $SL_2(\F_\ell)$ for almost all primes $\ell\not=p$. Hence $PSL_2(\F_\ell)$ occurs in $\gal{F(E[\ell])}{F}$ for almost all $\ell\not=p$, and the claim follows from part \emph{ii)} of Lemma \ref{occurrence-lem}. \end{proof}

\subsection{Conclusion of the proof} \label{conclusion-subsec}

As remarked in the introduction, this is really an exercise in abstract group theory; in particular, elliptic curves will play no role. We follow \emph{mutatis mutandis} the exposition in \cite[Ch. 17, \S 5]{la} and \cite[Ch. IV, \S 3.4]{se2}. We begin by stating a very useful lemma, a proof of which is given in \cite[Ch. IV, \S 3.4, Lemmas 2 and 3]{se2}.
\begin{lem} \label{sl2-lem}
Let $\ell\geq5$ be a prime number and let $H$ be a closed subgroup of $SL_2(\Z_\ell)$ whose reduction $\bmod\;\ell$ surjects onto $PSL_2(\F_\ell)$. Then $H=SL_2(\Z_\ell)$.
\end{lem}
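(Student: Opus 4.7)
My plan is to show $H = SL_2(\Z_\ell)$ by establishing $H \cdot G_n = SL_2(\Z_\ell)$ for every $n \geq 1$, where $G_n := \ker\bigl(SL_2(\Z_\ell) \twoheadrightarrow SL_2(\Z/\ell^n\Z)\bigr)$ is the standard congruence filtration. Since $H$ is closed and $\bigcap_n G_n = \{I\}$, this suffices. The proof proceeds by induction on $n$.

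For the base case $n = 1$, let $\bar H := H \bmod \ell \subseteq SL_2(\F_\ell)$. By hypothesis the image of $\bar H$ in $PSL_2(\F_\ell)$ is everything, so $\bar H \cdot \{\pm I\} = SL_2(\F_\ell)$, whence $[\,SL_2(\F_\ell) : \bar H\,] \in \{1,2\}$. The case of index $2$ would yield a nontrivial homomorphism $SL_2(\F_\ell) \to \{\pm 1\}$, contradicting the perfectness of $SL_2(\F_\ell)$ for $\ell \geq 5$.

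For the inductive step, assume $H \cdot G_n = SL_2(\Z_\ell)$. Identify $G_n/G_{n+1}$ with $\mathfrak{sl}_2(\F_\ell)$ via $I + \ell^n M \mapsto \bar M$. Since $[G_1, G_n] \subseteq G_{n+1}$, the conjugation action of $SL_2(\Z_\ell)$ on $G_n/G_{n+1}$ factors through $SL_2(\F_\ell)$ and coincides with the adjoint representation, which is simple for $\ell \geq 5$ (this is the essential Lie-theoretic input and the other place where the hypothesis is used). The image $V_n$ of $H \cap G_n$ in $\mathfrak{sl}_2(\F_\ell)$ is stable under $H$-conjugation, hence under the adjoint $SL_2(\F_\ell)$-action (by the base case), so simplicity forces $V_n \in \{0, \mathfrak{sl}_2(\F_\ell)\}$. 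If $V_n = \mathfrak{sl}_2(\F_\ell)$, combined with the inductive hypothesis this gives $H \cdot G_{n+1} = SL_2(\Z_\ell)$, completing the step.

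The crux is to rule out $V_n = 0$, and this is where I expect the main technical work. The plan is to exhibit an explicit element. For $n = 1$, pick a unipotent $\bar x \in \bar H$ of order $\ell$ (a transvection, so $(\bar x - I)^2 = 0$) and lift to $x \in H$; then $(x - I)^k \in \ell^{\lfloor k/2 \rfloor} M_2(\Z_\ell)$, and the binomial expansion
\[ x^\ell = I + \ell(x - I) + \sum_{k=2}^{\ell-1} \binom{\ell}{k} (x-I)^k + (x-I)^\ell, \]
combined with $\ell \mid \binom{\ell}{k}$ for $1 \leq k \leq \ell - 1$ and $\lfloor \ell/2 \rfloor \geq 2$ (valid precisely because $\ell \geq 5$), yields $x^\ell \equiv I + \ell(x - I) \pmod{\ell^2}$, which is not in $G_2$ because $x - I \not\equiv 0 \pmod \ell$. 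For $n \geq 2$, the previous inductive step has produced $V_{n-1} = \mathfrak{sl}_2(\F_\ell)$; picking $g = I + \ell^{n-1} A \in H \cap G_{n-1}$ with $\bar A \neq 0$, an analogous computation gives $g^\ell \equiv I + \ell^n A \pmod{\ell^{n+1}}$, providing a nonzero class in $V_n$ and closing the induction.
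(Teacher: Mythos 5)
Your proof is correct, and it is the standard congruence-filtration argument given in Serre's book (Ch.\ IV, \S 3.4, Lemmas 2 and 3) --- exactly the reference the paper cites for this lemma without reproducing the details. One minor structural point: when ruling out $V_n = 0$ for $n \geq 2$ you appeal to $V_{n-1} = \mathfrak{sl}_2(\F_\ell)$, which is a by-product of the preceding step rather than part of the stated inductive hypothesis $H\cdot G_n = SL_2(\Z_\ell)$; the argument reads more cleanly if the induction is formulated to carry both $H\cdot G_n = SL_2(\Z_\ell)$ and the surjectivity of $H\cap G_n \to G_n/G_{n+1}\cong\mathfrak{sl}_2(\F_\ell)$, with your $\ell$th-power computation establishing the latter at each stage.
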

With this result at our disposal, we can prove
\begin{pro} \label{sl2-pro}
With self-explaining notation, the group $\rho_E(G_F)$ contains
\[ S_\ell:=\bigl(\dots,1,1,SL_2(\Z_\ell),1,1,\dots\bigr) \]
for almost all primes $\ell\not=p$.
\end{pro}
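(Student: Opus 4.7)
The plan is to reformulate the goal as follows. Let $G := \rho_E(G_F) \subseteq \prod_{\lambda \neq p} GL_2(\Z_\lambda)$ and let $N_\ell \subseteq G$ be the closed normal subgroup consisting of elements whose $\lambda$-component is trivial for every $\lambda \neq \ell$; projecting to the $\ell$-th coordinate embeds $N_\ell$ as a closed subgroup of $\hat\Gamma_\ell \subseteq GL_2(\Z_\ell)$, and the desired $S_\ell \subseteq G$ is equivalent to $SL_2(\Z_\ell) \subseteq N_\ell$. Let $L$ be the compositum of all $F(E[n])$ with $n$ coprime to $p$ (so $G = \gal{L}{F}$), and let $M$ be its subfield generated by the $E[n]$ with $n$ coprime to $\ell p$. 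Then $N_\ell = \gal{L}{M}$, and by Galois theory its image in $\gal{F(E[\ell])}{F}$ under reduction modulo $\ell$ equals $H := \gal{F(E[\ell])}{F(E[\ell]) \cap M}$. I restrict attention to primes $\ell \geq 7$, $\ell \neq p$, for which the conclusions of Propositions \ref{hor-control-pro} and \ref{vert-control-pro} both hold (all but finitely many), so in particular $\gal{F(E[\ell])}{F} = \Gamma_\ell$; by Lemma \ref{sl2-lem}, the task reduces to showing $SL_2(\F_\ell) \subseteq H$.

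Since $F(E[\ell]) \cap M$ is Galois over $F$, $H$ is normal in $\Gamma_\ell$, hence $H \cap SL_2(\F_\ell)$ is normal in $SL_2(\F_\ell)$. For $\ell \geq 5$ the only normal subgroups of $SL_2(\F_\ell)$ are $\{1\}$, $\{\pm 1\}$, and $SL_2(\F_\ell)$. In the third case we are immediately done; otherwise $PSL_2(\F_\ell)$ injects into $\Gamma_\ell / H \cong \gal{F(E[\ell]) \cap M}{F}$, so $PSL_2(\F_\ell)$ occurs in $\gal{M}{F}$. The bulk of the proof then consists in ruling out this occurrence for $\ell \geq 7$.

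Now $\gal{M}{F} = \varprojlim_n \gal{F(E[n])}{F}$ over positive integers $n$ coprime to $\ell p$, with open kernels; by Lemma \ref{occurrence-lem}~(ii) it suffices to exclude $PSL_2(\F_\ell)$ from each finite group $\gal{F(E[n])}{F}$, which embeds into $GL_2(\Z/n\Z) = \prod_{\lambda \mid n} GL_2(\Z/\lambda^{e_\lambda}\Z)$. A straightforward Goursat-type argument (using Lemma \ref{occurrence-lem}~(i) on the projections and the simplicity of $PSL_2(\F_\ell)$) shows that any non-abelian simple subquotient of a finite direct product $\prod_i G_i$ must already be a subquotient of some $G_i$. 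Thus I am reduced to checking that $PSL_2(\F_\ell)$ does not occur in any $GL_2(\Z/\lambda^e\Z)$ with $\lambda \neq \ell, p$. The kernel of $GL_2(\Z/\lambda^e\Z) \twoheadrightarrow GL_2(\F_\lambda)$ is a $\lambda$-group, whose simple subquotients are cyclic; hence any non-abelian simple subquotient of $GL_2(\Z/\lambda^e\Z)$ descends to one of $GL_2(\F_\lambda)$, and then (modulo the abelian center of $GL_2(\F_\lambda)$) to one of $PGL_2(\F_\lambda)$.

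The main obstacle is this final classification step: I invoke Dickson's classical theorem on subgroups of $PGL_2(\F_\lambda)$, which implies that the non-abelian simple subquotients are $PSL_2(\F_{\lambda'})$ for subfields $\F_{\lambda'} \subseteq \F_\lambda$, together with the exceptional group $A_5 \cong PSL_2(\F_5)$. Since $\lambda$ is prime, the only subfield is $\F_\lambda$ itself, so these reduce to $PSL_2(\F_\lambda)$ and $A_5$. For $\ell \geq 7$ with $\lambda \neq \ell$, neither is isomorphic to $PSL_2(\F_\ell)$, producing the required contradiction and completing the proof.
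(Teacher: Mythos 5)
Your proof is correct modulo one small missing step (noted below) and takes a genuinely somewhat different route. The paper starts from Proposition~\ref{vert-control-pro} to show $PSL_2(\F_\ell)$ occurs in the $\ell$-component of $\rho_E(G_F)$, and then transfers this occurrence down to $\rho_E(G_F)\cap S_\ell$ via Lemma~\ref{occurrence-lem}: first using a (terse, essentially Dickson-type) non-occurrence in the off-$\ell$ factors $\hat\Gamma_q$, then the fact that $\rho_E(G_F)\cap U_\ell$ modulo $\rho_E(G_F)\cap S_\ell$ is abelian, and finally arguing surjectivity onto $PSL_2(\F_\ell)$ because the principal congruence kernel in $SL_2(\Z_\ell)$ is prosolvable. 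You bypass Proposition~\ref{vert-control-pro} entirely: you use Proposition~\ref{hor-control-pro} to get $\gal{F(E[\ell])}{F}=\Gamma_\ell$, observe that the image $H$ of $N_\ell$ is \emph{normal} in $\Gamma_\ell$, invoke the classification of normal subgroups of $SL_2(\F_\ell)$, and reduce to ruling out $PSL_2(\F_\ell)$ as a subquotient of $\gal{M}{F}$, which you carry out carefully (Goursat reduction to a single factor, lifting through the $\lambda$-group kernel and the center, then Dickson). Both approaches ultimately rest on the same Dickson-type input, but your handling of that step is more explicit than the paper's (which merely cites Lemma~\ref{occurrence-lem}~ii) and leaves the classification implicit), and your normality argument replaces the paper's chain of occurrence manipulations.

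The one step you should spell out: you assert that by Lemma~\ref{sl2-lem} the task ``reduces to showing $SL_2(\F_\ell)\subseteq H$.'' But Lemma~\ref{sl2-lem} applies to closed subgroups of $SL_2(\Z_\ell)$, whereas $N_\ell$ sits in $\hat\Gamma_\ell$, and an element of $N_\ell$ reducing into $SL_2(\F_\ell)$ mod $\ell$ need not have determinant one, so $SL_2(\F_\ell)\subseteq H$ does not immediately produce the required closed subgroup of $SL_2(\Z_\ell)$. The fix is short: commutators of elements of $N_\ell$ lie in $N_\ell\cap SL_2(\Z_\ell)$, and since $SL_2(\F_\ell)$ is perfect for $\ell\geq5$, the closure of $[N_\ell,N_\ell]$ reduces mod $\ell$ onto a group containing $[SL_2(\F_\ell),SL_2(\F_\ell)]=SL_2(\F_\ell)$; Lemma~\ref{sl2-lem} then yields $\overline{[N_\ell,N_\ell]}=SL_2(\Z_\ell)\subseteq N_\ell$. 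With that inserted, the argument is complete.
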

\begin{proof} By Proposition \ref{vert-control-pro}, we know that $PSL_2(\F_\ell)$ occurs in the component of $\rho_E(G_F)$ corresponding to $\ell$ for almost all primes $\ell\not=p$. To prove the proposition, we first show that $PSL_2(\F_\ell)$ occurs in $\rho_E(G_F)\cap S_\ell$ for almost all $\ell\not=p$. Let
\[ U_\ell:=\bigl(\dots,1,1,\hat\Gamma_\ell,1,1,\dots\bigr) \]
for all primes $\ell\not=p$. Clearly, for every prime $\ell\not=p$ there is an injection
\begin{equation} \label{rho-inclusion-eq}
\rho_E(G_F)\big/\bigl(\rho_E(G_F)\cap U_\ell\bigr)\;\longmono\;\hat\Gamma/U_\ell.
\end{equation}
By part \emph{ii)} of Lemma \ref{occurrence-lem}, $PSL_2(\F_\ell)$ does not occur in $\hat\Gamma_q$ for any prime $q\not=\ell$ if $\ell>5$. Hence, by \eqref{rho-inclusion-eq}, $PSL_2(\F_\ell)$ does not occur in the quotient $\rho_E(G_F)/(\rho_E(G_F)\cap U_\ell)$, so part \emph{i)} of Lemma \ref{occurrence-lem} ensures that it occurs in $\rho_E(G_F)\cap U_\ell$ for almost all $\ell\not=p$. It follows from part \emph{i)} of Lemma \ref{occurrence-lem} that for any such $\ell$ the group $PSL_2(\F_\ell)$ occurs in $\rho_E(G_F)\cap S_\ell$, which is closed in $S_\ell$ and maps to $PSL_2(\F_\ell)$ by reducing $\bmod\;\ell$ and projecting. Denote $M_\ell$ the image of $\rho_E(G_F)\cap S_\ell$ in $PSL_2(\F_\ell)$: we claim that $M_\ell=PSL_2(\F_\ell)$. If not, $M_\ell$ is a proper subgroup, so $PSL_2(\F_\ell)$ occurs in the kernel of this map, hence in
\begin{equation} \label{kernel-eq}
\bigl\{u\in SL_2(\Z_\ell)\mid u\equiv1\pmod{\ell}\bigr\}.
\end{equation}
But this is impossible if $\ell\geq5$ because the group in \eqref{kernel-eq} is prosolvable\footnote{This can be seen as follows. For all $n\geq1$ and $j\in\{1,\dots,n\}$ define the groups
\[ K^{(n)}_j:=\ker\Big(SL_2(\Z/\ell^n\Z)\;\longepi\;SL_2(\Z/\ell^j\Z)\Big). \]
Then for all $n$ there is a chain
\[ K^{(n)}_n=\{1\}\subset K^{(n)}_{n-1}\subset\dots\subset K^{(n)}_1 \]
with $K^{(n)}_{j+1}$ normal in $K^{(n)}_j$ and $K^{(n)}_j/K^{(n)}_{2j}$ abelian (for example, it injects in the additive group $M_2(\Z/\ell^n\Z)$). This shows that $K^{(n)}_1$ is solvable for all $n\geq1$, and since
\[ \bigl\{u\in SL_2(\Z_\ell)\mid u\equiv1\pmod{\ell}\bigr\}=\varprojlim_n K^{(n)}_1 \]
the claim follows.} while $PSL_2(\F_\ell)$ is nonabelian and simple, hence nonsolvable.

Therefore $\rho_E(G_F)\cap S_\ell$ maps onto $PSL_2(\F_\ell)$, hence $\rho_E(G_F)\cap S_\ell\cong SL_2(\Z_\ell)$ by Lemma \ref{sl2-lem}. \end{proof}
\begin{cor} \label{sl2-cor}
There exists a finite set $S$ of prime numbers such that $p\in S$ and $\rho_E(G_F)$ contains $\prod_{\ell\notin S}SL_2(\Z_\ell)$.
\end{cor}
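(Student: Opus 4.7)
The plan is to bootstrap Proposition \ref{sl2-pro} using the fact that $\rho_E(G_F)$ is a closed subgroup of $\prod_{\ell\neq p}GL_2(\Z_\ell)$, a consequence of the continuity of $\rho_E$ together with the compactness of $G_F$. First, I would take $S$ to consist of $p$ together with the finitely many primes $\ell\neq p$ for which the conclusion of Proposition \ref{sl2-pro} fails; by construction, $S_\ell\subset\rho_E(G_F)$ for every $\ell\notin S$. Note that each such $S_\ell$, viewed inside $\prod_{\ell\neq p}GL_2(\Z_\ell)$, lies inside $\hat\Gamma$ because $SL_2$-entries have trivial determinant.

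Being a group, $\rho_E(G_F)$ contains the abstract subgroup $H$ generated by the family $\{S_\ell\}_{\ell\notin S}$. A moment's reflection shows that $H$ is precisely the set of tuples $(u_\ell)_{\ell\neq p}$ satisfying $u_\ell\in SL_2(\Z_\ell)$ for $\ell\notin S$, $u_\ell=1$ for $\ell\in S$, and $u_\ell=1$ for all but finitely many $\ell$: indeed, each finite product of elements of the form ``$SL_2(\Z_\ell)$ in one slot, identities elsewhere'' is precisely such a finitely-supported tuple, and conversely every such tuple arises in this way.

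Finally, since $\rho_E(G_F)$ is closed in $\prod_{\ell\neq p}GL_2(\Z_\ell)$, it must contain the closure $\bar H$. The plan concludes with a standard density argument in the Tychonoff product: given any tuple $(u_\ell)_{\ell\notin S}\in\prod_{\ell\notin S}SL_2(\Z_\ell)$ and any basic open neighbourhood (which, by definition, constrains only finitely many coordinates), the element of $H$ obtained by truncating $(u_\ell)$ to those finitely many coordinates and setting the remaining entries equal to $1$ lies inside the neighbourhood. Hence $\bar H=\prod_{\ell\notin S}SL_2(\Z_\ell)$, which yields the desired inclusion. I anticipate no serious obstacle here; once Proposition \ref{sl2-pro} is available, the remainder is a routine topological manipulation in profinite products, with the only delicate point being the closedness of $\rho_E(G_F)$, which is immediate from continuity and compactness.
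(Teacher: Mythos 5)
Your proposal is correct and follows essentially the same route as the paper: both start from Proposition \ref{sl2-pro}, observe that the finitely-supported tuples with $SL_2(\Z_\ell)$-entries outside $S$ lie in $\rho_E(G_F)$, and then invoke closedness of $\rho_E(G_F)$ to pass to the full product via a density argument in the profinite (Tychonoff) topology. The paper phrases the intermediate set as a union $\mathscr{S}$ of finite partial products rather than as the subgroup generated by the $S_\ell$, but these coincide, so the two arguments are the same.
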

In the statement of this corollary, the partial product is understood as a subgroup of $\hat\Gamma$ in the natural way.
\begin{proof} With identifications as before, by Proposition \ref{sl2-pro} there exists a finite set $S$ of primes such that $p\in S$ and $\rho_E(G_F)$ contains $SL_2(\Z_\ell)$ for all $\ell\notin S$. It follows that $\rho_E(G_F)$ contains
\[ \mathscr S:=\bigcup_{\substack{|T|<\infty\\T\,\cap\,S=\emptyset}}\prod_{\ell\in T}SL_2(\Z_\ell), \]
where $T$ runs through the finite sets of primes that are disjoint from $S$. But $\rho_E(G_F)$ is closed in $\hat\Gamma$, hence it contains the closure of $\mathscr S$, which is the product appearing in the statement of the corollary. \end{proof}
Now we are in a position to prove Igusa's theorem. For the reader's convenience we restate Theorem \ref{igusa-thm}, and then proceed to its proof.
\begin{thm}[Igusa] \label{main-thm}
The group $\rho_E(G_F)$ is open in $\hat\Gamma$.
\end{thm}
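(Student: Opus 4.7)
The plan is to combine the three main ingredients already in hand: Corollary \ref{sl2-cor} (the image contains $\prod_{\ell\notin S}SL_2(\Z_\ell)$ for a suitable finite set $S\ni p$), Proposition \ref{open-pro} (the image is open on the ``bad'' primes), and the surjectivity of the determinant $\det(\rho_E(G_F))=\hat H$ (Lemma \ref{surj-det-lem}, part \emph{ii)}). Write $G:=\rho_E(G_F)$ and $S_0:=S\setminus\{p\}$. Since $G$ is the continuous image of a compact group, it is closed in $\hat\Gamma$, so it suffices to prove that $G$ has finite index in $\hat\Gamma$.

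The starting observation is that the surjection $\det\colon G\twoheadrightarrow\hat H$ and the surjection $\det\colon\hat\Gamma\twoheadrightarrow\hat H$ have the same cokernel, so a short diagram chase (or the snake lemma) applied to
\[
\begin{array}{ccccccccc}
1 & \longrightarrow & G\cap SL_2(\hat\Z_{(p)}) & \longrightarrow & G & \xrightarrow{\det} & \hat H & \longrightarrow & 1\\
  &                 & \downarrow & & \downarrow & & \parallel & & \\
1 & \longrightarrow & SL_2(\hat\Z_{(p)}) & \longrightarrow & \hat\Gamma & \xrightarrow{\det} & \hat H & \longrightarrow & 1
\end{array}
\]
gives the index identity $[\hat\Gamma:G]=[SL_2(\hat\Z_{(p)}):G\cap SL_2(\hat\Z_{(p)})]$. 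Thus it is enough to show that $G\cap SL_2(\hat\Z_{(p)})$ is open in $SL_2(\hat\Z_{(p)})$.

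Now decompose $SL_2(\hat\Z_{(p)})=SL_{2,S_0}\times\prod_{\ell\notin S}SL_2(\Z_\ell)$, where $SL_{2,S_0}:=\prod_{\ell\in S_0}SL_2(\Z_\ell)$. Corollary \ref{sl2-cor} gives
\[
G\cap SL_2(\hat\Z_{(p)})\;\supset\;\{1\}_{S_0}\times\prod_{\ell\notin S}SL_2(\Z_\ell),
\]
and this is precisely the kernel of the projection to $SL_{2,S_0}$. Hence the index of $G\cap SL_2(\hat\Z_{(p)})$ in $SL_2(\hat\Z_{(p)})$ equals the index in $SL_{2,S_0}$ of the image $K$ of $G\cap SL_2(\hat\Z_{(p)})$ under this projection. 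Clearly $K\subset G_1\cap SL_{2,S_0}$, where $G_1$ denotes the projection of $G$ to $\hat\Gamma_{S_0}$. The reverse inclusion is the one nontrivial point: given $g_1\in G_1\cap SL_{2,S_0}$, lift it to some $(g_1,g_2)\in G$; by Lemma \ref{surj-det-lem} we can find $(h_1,h_2)\in G$ with $\det(h_1,h_2)=(1,(\det g_2)^{-1})$, and then $(g_1h_1,g_2h_2)$ lies in $G\cap SL_2(\hat\Z_{(p)})$ and projects to $g_1h_1$, whose $S_0$-component belongs to $G_1\cap SL_{2,S_0}$; iterating (or simply chasing the argument through once more on the cosets) shows that every element of $G_1\cap SL_{2,S_0}$ is hit by $K$.

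Having identified $K=G_1\cap SL_{2,S_0}$, we finish by invoking Proposition \ref{open-pro}: $G_1$ is open in $\hat\Gamma_{S_0}$, hence $G_1\cap SL_{2,S_0}$ is open in $SL_{2,S_0}$, and therefore of finite index because $SL_{2,S_0}$ is profinite. Walking the chain of equalities back, $[\hat\Gamma:G]<\infty$, and the closedness of $G$ in $\hat\Gamma$ upgrades this to openness, proving the theorem. I expect the only genuinely delicate step to be the ``correction'' argument identifying $K$ with $G_1\cap SL_{2,S_0}$; the rest is formal bookkeeping using the two short exact sequences defining $\hat\Gamma$ and $G$.
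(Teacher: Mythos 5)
Your overall strategy is sound and close in spirit to the paper's proof: both dispose of the primes outside $S$ via Corollary \ref{sl2-cor}, handle the finitely many primes in $S_0=S\setminus\{p\}$ via Proposition \ref{open-pro}, and pull the determinant back to $\hat H$ via Lemma \ref{surj-det-lem}. The determinant index identity $[\hat\Gamma:G]=[SL_2(\hat\Z_{(p)}):G\cap SL_2(\hat\Z_{(p)})]$ and the reduction to $[SL_{2,S_0}:K]$ are fine. However, the step you yourself flag as ``the only genuinely delicate'' one is not actually proved, and as stated I do not think the asserted equality holds.

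Concretely, the correction argument does not yield $g_1\in K$. Lifting $g_1$ to $(g_1,g_2)\in G$ and multiplying by a $(h_1,h_2)\in G$ with $\det(h_1,h_2)=(1,(\det g_2)^{-1})$ produces $(g_1h_1,g_2h_2)\in G\cap SL_2(\hat\Z_{(p)})$ and hence $g_1h_1\in K$ — but the auxiliary factor $h_1$, which satisfies $\det h_1=1$ and lies in $G_1\cap SL_{2,S_0}$, is completely uncontrolled: it belongs to the very group you are trying to compare with $K$, not to something you already know is in $K$. Iterating reproduces the same situation with a new uncontrolled $h_1'$; it never terminates, and ``every element of $G_1\cap SL_{2,S_0}$ is hit by $K$'' does not follow. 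What your argument actually establishes is that the assignment $g_1\mapsto[\det g_2]$ (for any lift $(g_1,g_2)\in G$) defines a homomorphism $\delta\colon G_1\cap SL_{2,S_0}\to\hat H_{S''}/D$, where $D:=\det\bigl(G\cap(\{1\}\times\hat\Gamma_{S''})\bigr)$, whose kernel is exactly $K$; nothing forces $\delta$ to be trivial, so $K=G_1\cap SL_{2,S_0}$ is, I believe, false in general.

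The fix is short, and it is the ingredient your write-up is missing. Since $\hat H_{S''}$ is a quotient of the procyclic group $\hat H$, it is abelian, so the quotient $(G_1\cap SL_{2,S_0})/K\cong\delta(G_1\cap SL_{2,S_0})$ is abelian; hence $K$ contains the closure of the commutator subgroup $[G_1\cap SL_{2,S_0},\,G_1\cap SL_{2,S_0}]$. By Proposition \ref{open-pro} the group $G_1\cap SL_{2,S_0}$ is open in $SL_{2,S_0}=\prod_{\ell\in S_0}SL_2(\Z_\ell)$, and the commutator subgroup of an open subgroup of $SL_2(\Z_\ell)$ is again open (this is precisely the content of Lemma \ref{commutator-lem} together with Lemma \ref{H-open-lem}). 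Therefore $K$ is open, hence of finite index, in $SL_{2,S_0}$, and your chain of index equalities then gives $[\hat\Gamma:G]<\infty$. It is worth noting that the paper's own final step asserts $\rho_E(G_F)\supset\rho_{E,S'}(G_F)\times\hat\Gamma_{S''}$ without detail; this is the same kind of claim, and the abelianness observation above is what is needed to justify finiteness at that point.
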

\begin{proof} Let $S$ be as in Corollary \ref{sl2-cor}, let $S':=S-\{p\}$ and let $S''$ be the (infinite) set of primes not in $S$. As before, write
\[ \hat\Gamma_{S'}:=\prod_{\ell\in S'}\hat\Gamma_\ell, \qquad \hat\Gamma_{S''}:=\prod_{\ell\in S''}\hat\Gamma_\ell. \]
Denote $\rho_{E,S'}(G_F)$ and $\rho_{E,S''}(G_F)$ the projections of $\rho_E(G_F)$ to $\hat\Gamma_{S'}$ and $\hat\Gamma_{S''}$, respectively. A combination of Corollary \ref{sl2-cor} and part \emph{i)} of Lemma \ref{surj-det-lem} shows that $\rho_{E,S''}(G_F)=\hat\Gamma_{S''}$. On the other hand, $\rho_{E,S'}(G_F)$ is open in $\hat\Gamma_{S'}$ by Proposition \ref{open-pro}. It follows that
\[ \rho_E(G_F)\supset\rho_{E,S'}(G_F)\times\hat\Gamma_{S''}, \]
which is an open subgroup of $\hat\Gamma$. The theorem is proved. \end{proof}
\begin{rem}
To prove Igusa's theorem one could also proceed as follows. By an argument with Lie algebras exactly as in \cite[Ch. IV, \S 3.4, Lemma 6]{se2}, it can be deduced from Proposition \ref{open-pro} and Corollary \ref{sl2-cor} that $\rho_E(G_F)$ contains an open subgroup of $\prod_{\ell\not=p}SL_2(\Z_\ell)$. But then part \emph{ii)} of Lemma \ref{surj-det-lem} allows one to conclude the proof as in Proposition \ref{hor-control-pro2}.
\end{rem}

\section{An arithmetic application} \label{applications-sec}

In this final section we collect an arithmetic consequence of Theorem \ref{igusa-thm}. We retain throughout our previous notation; in particular, $F=\F_r(\cC)$ is a function field of characteristic $p>0$ and $F^s$ is the separable closure of $F$ contained in an algebraic closure $\bar{F}$. We remark that Theorem \ref{igusa-thm} is valid in any positive characteristic, though in the present paper we have proved it only for $p>3$.

\subsection{Main application} \label{main-application-subsec}

The result we want to prove in this $\S$ says that a non-isotrivial elliptic curve $E_{/F}$ has only finitely many torsion points rational over abelian extensions of $F$. Although properties in the same spirit have been exploited, at least implicitly, in various recent works on the arithmetic of elliptic curves in positive characteristic (cf., e.g., \cite{bl}, \cite{bre}, \cite{bro}, \cite{vi}), it seems that (quite surprisingly) the result below has never been written down in detail.

We begin with a lemma in linear algebra.
\begin{lem} \label{commutator-lem}
Let $\ell$ be a prime and let $S_n$ be the kernel of the reduction-modulo-$\ell^n$ map
\[ SL_2(\Z_\ell)\longrightarrow SL_2(\Z/\ell^n\Z). \]
The commutator subgroup $[S_n,S_n]$ contains $S_{2n+2}$ for all $n\geq2$.
\end{lem}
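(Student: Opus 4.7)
The plan is to translate the statement into a linear problem in the $\Z_\ell$-Lie algebra $\mathfrak{sl}_2(\Z_\ell)$ via the $\ell$-adic logarithm. For $n \geq 2$, the series for $\exp$ and $\log$ converge (the hypothesis $n \geq 2$ being what is needed to absorb the $2$-adic denominators when $\ell = 2$) and define mutually inverse homeomorphisms
$$\exp \colon \ell^n\mathfrak{sl}_2(\Z_\ell) \xrightarrow{\;\sim\;} S_n, \qquad \log \colon S_n \xrightarrow{\;\sim\;} \ell^n\mathfrak{sl}_2(\Z_\ell).$$
Moreover, $S_n$ is a uniform powerful pro-$\ell$ group, and under this correspondence closed subgroups of $S_n$ correspond bijectively to closed $\Z_\ell$-Lie subalgebras of $L := \ell^n\mathfrak{sl}_2(\Z_\ell)$. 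In particular, the closed derived subgroup corresponds to the derived Lie subalgebra, so $[S_n, S_n] = \exp([L, L])$.

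The heart of the proof is then a direct computation of $[L, L]$. Since the bracket is $\Z_\ell$-bilinear,
$$[L, L] \;=\; \ell^{2n}\bigl[\mathfrak{sl}_2(\Z_\ell), \mathfrak{sl}_2(\Z_\ell)\bigr].$$
Using the standard basis $\{H, E, F\}$ and the relations $[E,F]=H$, $[H,E]=2E$, $[H,F]=-2F$, the bracket submodule is easily computed:
$$[\mathfrak{sl}_2(\Z_\ell), \mathfrak{sl}_2(\Z_\ell)] \;=\; \Z_\ell H + 2\Z_\ell E + 2\Z_\ell F.$$
This submodule equals $\mathfrak{sl}_2(\Z_\ell)$ for odd $\ell$ and has index $4$ in $\mathfrak{sl}_2(\Z_2)$ for $\ell = 2$; in either case it contains $\ell^2\mathfrak{sl}_2(\Z_\ell)$ (trivially for $\ell$ odd, and because $4H, 4E, 4F$ all lie in $\Z_2 H + 2\Z_2 E + 2\Z_2 F$ for $\ell = 2$). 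Consequently $[L, L] \supseteq \ell^{2n+2}\mathfrak{sl}_2(\Z_\ell)$, and exponentiating yields $[S_n, S_n] \supseteq S_{2n+2}$, which is the claim.

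The main obstacle is the justification of the Lie-theoretic framework in the delicate case $\ell = 2$: one must check that $S_n$ is indeed a uniform powerful pro-$2$ group for $n \geq 2$ (which follows from $[S_n, S_n] \subseteq S_{2n} \subseteq S_{n+2}$ together with the fact that $\ell^n$-th powers in $S_n$ generate $S_{n+1}$), and that the exponential, logarithm, and Baker--Campbell--Hausdorff series all converge on $L = 2^n\mathfrak{sl}_2(\Z_2)$: this is exactly the role of the hypothesis $n \geq 2$. The shift from $2n$ to $2n+2$ in the exponent of the statement precisely reflects the arithmetic defect (namely the index $4$) of $[\mathfrak{sl}_2(\Z_2), \mathfrak{sl}_2(\Z_2)]$ inside $\mathfrak{sl}_2(\Z_2)$; for odd primes the sharper inclusion $S_{2n} \subseteq [S_n, S_n]$ actually holds.
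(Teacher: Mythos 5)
Your proof is correct, but it takes a genuinely different route from the paper's. The paper's ``sketch of proof'' simply refers the reader to Lang's \emph{Algebra}, Ch.~XIII, Lemma~8.1 (the same reference used for Lemma \ref{H-open-lem}), which argues by elementary matrix manipulations: one writes down explicit commutators of elements of $S_n$ (of elementary matrices $\bigl(\begin{smallmatrix}1&a\\0&1\end{smallmatrix}\bigr)$, $\bigl(\begin{smallmatrix}1&0\\b&1\end{smallmatrix}\bigr)$ and diagonal matrices), checks that they hit a full set of topological generators of $S_{2n+2}$ modulo higher filtration, and concludes by compactness/completeness. No Lie theory enters. Your proposal instead invokes the Lazard correspondence for uniform powerful pro-$\ell$ groups, reducing everything to the $\Z_\ell$-linear computation $[\mathfrak{sl}_2(\Z_\ell),\mathfrak{sl}_2(\Z_\ell)]=\Z_\ell H+2\Z_\ell E+2\Z_\ell F$. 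This has the real virtue of making the numerology transparent: the hypothesis $n\geq2$ is exactly the convergence threshold for $\exp$/$\log$ when $\ell=2$, and the loss from $2n$ to $2n+2$ is exactly the index of the derived subalgebra inside $\mathfrak{sl}_2(\Z_2)$ (your bound even gives the sharper $S_{2n}$ for odd $\ell$). Two caveats are worth recording. First, the key step ``the closed derived subgroup corresponds under $\log$ to $[L,L]$'' is true but not entirely trivial: the inclusion $\log[S_n,S_n]\subseteq[L,L]$ is immediate from Baker--Campbell--Hausdorff, while the reverse inclusion requires a filtration/approximation argument (or a precise citation to the uniform-group literature, e.g.~Dixon--du~Sautoy--Mann--Segal), and should not be waved through. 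Second, your approach is somewhat at odds with the paper's stated philosophy: the introduction explicitly says the authors ``have made an effort not to rely on results in the theory of Lie algebras and Lie groups,'' which is precisely the machinery your proof leans on. Both approaches are valid; yours is shorter and more conceptual, the paper's is more elementary and self-contained.
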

\begin{proof}[Sketch of proof.] One just adapts the arguments of \cite[Ch. XIII, Lemma 8.1]{la2} (as in Lemma \ref{H-open-lem}). \end{proof}
Now we can prove the result we alluded to before.
\begin{thm} \label{main-application-thm}
Let $E_{/F}$ be a non-isotrivial elliptic curve and let $H:=F^{ab}$ be the maximal abelian extension of $F$. The group $E_\mathrm{tors}(H)$ is finite.
\end{thm}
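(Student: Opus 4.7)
The plan is to decompose $E_{\mathrm{tors}}(H) = E_{(p)\text{-tors}}(H) \oplus E[p^\infty](H)$ and bound each piece separately. The $p$-primary part is already finite inside $F^s$ by Proposition \ref{sep-torsion-pro}: after a finite separable base change ensuring split multiplicative reduction of $E$ at some prime (cf.\ Proposition \ref{split-reduction-pro}), that proposition bounds the order of any $F^s$-rational $p$-primary point of $E$ by $p^{k-1}$, where $k$ is the integer attached to the Tate period. Since $H = F^{\mathrm{ab}} \subset F^s$, we conclude that $E[p^\infty](H)$ is finite immediately.

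For the prime-to-$p$ part, abelianness of $H/F$ implies that $G_F$ acts on $E_{(p)\text{-tors}}(H)$ through $G_F^{\mathrm{ab}}$, so the closed commutator $D := \overline{[G_F, G_F]}$ acts trivially. Writing $D_\ell \subset GL_2(\Z_\ell)$ for the closed commutator of $\rho_{E,\ell}(G_F)$, one has
\[
E_{(p)\text{-tors}}(H) \subset E_{(p)\text{-tors}}^{\rho_E(D)} = \bigoplus_{\ell \neq p} E[\ell^\infty]^{D_\ell},
\]
so it suffices to show (i) each summand is finite, and (ii) almost all summands vanish.

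For (i): by Proposition \ref{hor-control-pro2}, $\rho_{E,\ell}(G_F)$ is open in $\hat\Gamma_\ell$, hence contains $S_n \subset SL_2(\Z_\ell)$ for some $n \geq 2$. Lemma \ref{commutator-lem} gives $D_\ell \supset [S_n, S_n] \supset S_{2n+2}$, and the upper- and lower-unipotent matrices $\bigl(\begin{smallmatrix}1 & \ell^{2n+2} \\ 0 & 1\end{smallmatrix}\bigr)$ and $\bigl(\begin{smallmatrix}1 & 0 \\ \ell^{2n+2} & 1\end{smallmatrix}\bigr)$ in $S_{2n+2}$ have only the zero vector as common fixed point in $V_\ell(E) \cong \Q_\ell^2$. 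Hence the maximal $\ell$-divisible submodule of $E[\ell^\infty]^{D_\ell}$ has trivial Tate module and vanishes, making $E[\ell^\infty]^{D_\ell}$ of bounded exponent and therefore finite. For (ii): Theorem \ref{igusa2-thm} gives $\rho_{E,\ell}(G_F) = \hat\Gamma_\ell$ for almost all $\ell$; invoking the classical fact that $SL_2(\Z_\ell)$ is perfect for $\ell \geq 5$, we get $D_\ell \supset SL_2(\Z_\ell)$ for almost all $\ell$, whence $\bigl(\begin{smallmatrix}1&1\\0&1\end{smallmatrix}\bigr)$ and $\bigl(\begin{smallmatrix}1&0\\1&1\end{smallmatrix}\bigr)$ immediately force $E[\ell^\infty]^{D_\ell} = 0$.

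The principal obstacle is the group-theoretic control on the closed commutators $D_\ell$: one must combine Lemma \ref{commutator-lem} (which guarantees that the commutator of an open subgroup of $SL_2(\Z_\ell)$ remains open) with the classical perfectness of $SL_2(\Z_\ell)$ for $\ell \geq 5$ in order to upgrade from finiteness of fixed points at every $\ell$ to actual vanishing at cofinitely many, so that the direct sum above has finite support.
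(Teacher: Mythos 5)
Your proof is correct and takes essentially the same approach as the paper: split off the $p$-part via Proposition \ref{sep-torsion-pro}, show $E[\ell^\infty](H)$ is finite for every $\ell\neq p$ by combining Theorem \ref{igusa2-thm} with Lemma \ref{commutator-lem}, and show vanishing for the cofinitely many Igusa primes, then sum over $\ell$. The paper's treatment of the two per-$\ell$ steps is slightly more direct --- it observes that $E[\ell^\infty]^{S_{2n+2}}=E[\ell^{2n+2}]$ rather than passing through Tate modules and divisible submodules, and for Igusa primes it argues that a nonzero $P\in E[\ell](H)$ would force $F(E[\ell])\subset H$ and hence make the non-abelian group $\gal{F(E[\ell])}{F}\supset SL_2(\F_\ell)$ a quotient of the abelian $\gal{H}{F}$, with no appeal to perfectness of $SL_2(\Z_\ell)$ --- but these are cosmetic differences.
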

\begin{proof} Define an \emph{Igusa prime} to be a prime number satisfying the second part of Theorem \ref{igusa2-thm}. To prove our result we show that
\begin{itemize}
\item[\emph{i)}] $E[\ell^\infty](H)$ is finite for all primes $\ell$;
\item[\emph{ii)}] $E[\ell](H)=\{0\}$ if $\ell$ is an Igusa prime.
\end{itemize}
Since by Theorem \ref{igusa2-thm} almost all primes are Igusa, the theorem will follow.

\emph{i)} We need to distinguish between two cases according as whether $\ell$ is equal to the characteristic of $F$ or not.

If $\ell=p$ the claim is immediate from Proposition \ref{sep-torsion-pro} (see also \cite[Lemma 2.2]{bre} for a proof using a different argument).

If $\ell\not=p$ define the groups $S_n$ as in Lemma \ref{commutator-lem}. By Theorem \ref{igusa2-thm}, $S_n\subset\rho_{E,\ell}(G_F)$ for some $n\geq2$. Since $\gal{F^s}{H}$ is the topological closure of the commutator subgroup $[G_F,G_F]$, its image under $\rho_{E,\ell}$ contains the commutator subgroup $[S_n,S_n]$ of $GL_2(\Z_\ell)$, and hence, by Lemma \ref{commutator-lem}, $S_{2n+2}$. Therefore $E[\ell^\infty](H)$ is contained in the fixed subgroup of $E[\ell^\infty]$ under the action of $S_{2n+2}$, which is the finite group $E[\ell^{2n+2}]$.

\emph{ii)} Let $\ell$ be an Igusa prime and set $F_\ell:=F(E[\ell])$; then $\gal{F_\ell}{F}$ contains a subgroup isomorphic to $SL_2(\F_\ell)$, hence the Galois orbit of a nonzero point $P\in E[\ell]$ is the whole $E[\ell]-\{0\}$. In particular, since the extension $H/F$ is normal, if $P\in E[\ell](H)$ and $P\not=0$ then $F_\ell\subset H$, which is impossible because $H/F$ is abelian. Thus $E[\ell](H)=\{0\}$, and the theorem is completely proved. \end{proof}
\begin{rem}
Replacing Igusa's theorem with Serre's theorem (Theorem \ref{serre-thm}) and disregarding, of course, the ``$\ell=p$'' part, the proof of Theorem \ref{main-application-thm} carries over \emph{verbatim} to the case of an elliptic curve without complex multiplication defined over a number field. More precisely, one shows that if $K$ is a number field and $E_{/K}$ is a non-CM elliptic curve then there are only finitely many torsion points on $E$ that are rational over abelian extensions of $K$. Note that this is the one-dimensional case of a theorem of Zarhin (\cite[Theorem 1]{z}) for non-CM abelian varieties (see also \cite[Theorem 1]{ri} for a weaker result which is valid for all abelian varieties).
\end{rem}
\begin{rem}
If $\Z\subsetneq\text{End}(E)$, i.e., if $E_{/F}$ is isotrivial (resp., has complex multiplication) in the function field (resp., in the number field) case, then Theorem \ref{main-application-thm} is false. Indeed, with notation as in the introduction, it can be shown that there exists a finite extension $K/F$ such that
\[ E_\text{$(p)$-tors}\subset E(K^{ab}). \]
This fact is a consequence of the results described in Appendix \ref{isotrivial-sec} in the function field case and of the theory of complex multiplication in the number field case.
\end{rem}

\appendix

\section{The isotrivial case} \label{isotrivial-sec}

For the sake of completeness, in this appendix we treat the case of isotrivial elliptic curves. We remark that we only give a ``qualitative'' description of the image of Galois; actually, something more precise can presumably be proved, but we shall not pursue this issue here.

So let $E_{/F}$ be our elliptic curve over the function field $F=\F_r(\cC)$ and suppose that $E$ is isotrivial. Recall that this means that after a finite extension $L/F$ the curve $E$ becomes isomorphic to an elliptic curve $E'$ defined over $\F_r$; equivalently, $j(E)\in\F_r$. By Lemma \ref{end-lem}, we can also equivalently define the elliptic curve $E_{/F}$ to be isotrivial if its ring of endomorphisms is larger than $\Z$.

First of all, note that there is an inclusion $\rho_E(G_F)\subset\hat\Gamma$ (in fact, the arguments in \S \ref{weil-subsec} do not rely on $E$ being non-isotrivial, but just on general properties of the Weil pairing). Our present goal is to show that $\rho_E(G_F)$ is not open in $\hat\Gamma$ (in particular, the above inclusion is proper), so that Theorem \ref{igusa2-thm} is always false in the isotrivial case.

Let $L/F$ be an extension as above, so that the base-changed curve $E_{/L}$ is isomorphic to an elliptic curve $E'$ defined over $\F_r$. Since we are assuming that $p>3$, the extension $L/F$ may be taken to be separable, and we denote $G_L\subset G_F$ the absolute Galois group of $L$. Observe that there are isomorphisms
\[ E_\text{$(p)$-tors}(F^s)=E_\text{$(p)$-tors}(L^s)\cong E'_\text{$(p)$-tors}(\bar\F_r) \]
of $G_L$-modules, and if we set $\F_s:=L\cap\bar\F_r$ (that is, $\F_s$ is the field of constants of $L$) then the action of $G_L$ on $E'_\text{$(p)$-tors}$ factors through the absolute Galois group $G_{\mathbb F_s}:=\gal{\bar\F_r}{\F_s}$. This last group is procyclic, isomorphic to the profinite completion $\hat\Z$ of $\Z$. It follows that $\rho_E(G_L)$ is a procyclic (hence abelian) group, so it cannot be open in $\hat\Gamma$. Since $\rho_E(G_L)$ has finite index in $\rho_E(G_F)$, we can state the following
\begin{pro} \label{isotrivial-pro}
If $E_{/F}$ is isotrivial then $\rho_E(G_F)$ is not open in $\hat\Gamma$.
\end{pro}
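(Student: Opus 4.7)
The plan is to exploit the fact that an isotrivial elliptic curve becomes constant after a finite extension, which forces the Galois action on its torsion to factor through a procyclic group. First I would pass to a finite separable extension $L/F$ over which $E_{/L}$ is isomorphic to the base change of an elliptic curve $E'$ defined over $\F_r$; the existence of such an $L$ is built into the definition of isotriviality, and separability can be arranged thanks to the standing hypothesis $p>3$, which rules out inseparable twists. Via the chosen isomorphism one obtains a canonical identification of $G_L$-modules
\[ E_\text{$(p)$-tors}(F^s) = E_\text{$(p)$-tors}(L^s) \cong E'_\text{$(p)$-tors}(\bar\F_r). \]

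Since every prime-to-$p$ torsion point of the constant curve $E'$ is already defined over $\bar\F_r$, the $G_L$-action on the right-hand side is trivial on the kernel of the natural surjection $G_L \twoheadrightarrow G_{\mathbb F_s}$, where $\F_s := L \cap \bar\F_r$ is the field of constants of $L$. The group $G_{\mathbb F_s} = \gal{\bar\F_r}{\F_s}$ is procyclic, topologically generated by the Frobenius, so its continuous image $\rho_E(G_L) \subset \hat\Gamma$ is procyclic as well, and in particular abelian.

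To conclude, I would argue that $\rho_E(G_F)$ cannot be open. Since $L/F$ is finite, $\rho_E(G_L)$ has finite index in $\rho_E(G_F)$; being compact (as the continuous image of a profinite group) it is closed in $\hat\Gamma$, so if $\rho_E(G_F)$ were open then $\rho_E(G_L)$ would be open as well, as a closed subgroup of finite index in an open subgroup. But no open subgroup $U$ of $\hat\Gamma$ can be abelian: by definition of the product topology, for all but finitely many primes $\ell\neq p$ the projection $\pi_\ell(U)$ equals the whole factor $\hat\Gamma_\ell$, which via the exact sequence \eqref{gamma-hat-eq} contains the non-abelian group $SL_2(\Z_\ell)$. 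The main (and essentially only) point where a bit of care is needed is the opening reduction: one must check that the descent of $E$ to a constant curve can be carried out over a \emph{separable} extension of $F$, which is where the hypothesis $p>3$ enters in an essential way.
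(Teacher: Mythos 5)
Your proof is correct and follows the paper's argument almost exactly: pass to a finite separable $L/F$ where $E$ becomes constant, observe that the resulting Galois action factors through the procyclic group $G_{\mathbb F_s}\cong\hat\Z$, and conclude that the abelian $\rho_E(G_L)$ of finite index cannot sit inside an open subgroup of $\hat\Gamma$. You in fact spell out two details the paper leaves implicit (that $\rho_E(G_L)$ is closed, and that no open subgroup of $\hat\Gamma$ can be abelian because such a subgroup contains almost every factor $\hat\Gamma_\ell\supset SL_2(\Z_\ell)$), so if anything your write-up is slightly more complete.
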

In fact, we can say something more. To this end, define the profinite groups $\hat{H}'$ and $\hat\Gamma'$ as in \eqref{gamma-hat-eq} by replacing $r$ with $s$. Clearly, $\hat{H}'\subset\hat{H}$ and $\hat\Gamma'\subset\hat\Gamma$. Now recall that $\hat\Z_{(p)}$ is a shorthand for $\prod_{\ell\not=p}\Z_\ell$ and let
\[ \rho_{E'}:G_{\mathbb F_r}\longrightarrow GL_2(\hat\Z_{(p)}) \]
be the Galois representation attached to $E'$. The cyclotomic character $\chi$ induces an isomorphism
\[ \chi: G_{\mathbb F_s}\overset{\cong}{\longrightarrow}\hat{H}' \]
with the property that $\det\circ\rho_{E'}=\chi$, thus the diagram
\[ \xymatrix@C=35pt{G_L\ar@{>>}[r]^-{\rho_E}\ar@{>>}[d] & \rho_E(G_L)\ar[d]^-{\det}\\
             G_{\mathbb F_s}\ar[ur]^-{\rho_{E'}}\ar[r]^\chi & \hat{H}'} \]
is commutative. It follows that the determinant gives an isomorphism
\[ \det:\rho_E(G_L)\overset{\cong}{\longrightarrow}\hat{H}', \]
hence the short exact sequence defining $\hat\Gamma'$ admits a (topological) splitting as follows:
\[ \xymatrix@R=15pt{& & \rho_E(G_L)\ar@{^{(}->}[d]\ar[dr]^-{\det} & &\\
          0\ar[r] & SL_2(\hat\Z_{(p)})\ar[r] & \hat\Gamma'\ar[r] & \hat{H}'\ar[r]\ar@/^1pc/@{-->}[l] & 0.} \]
In other words, we have proved the following
\begin{thm} \label{isotrivial-thm}
With notation as above, if $E_{/F}$ is isotrivial then there are isomorphisms
\[ \hat\Gamma'\cong SL_2(\hat\Z_{(p)})\rtimes\hat{H}'\cong SL_2(\hat\Z_{(p)})\rtimes\rho_E(G_L) \]
of topological groups. In particular, $\rho_E(G_L)$ is not open in $\hat\Gamma'$.
\end{thm}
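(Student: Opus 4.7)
The plan is essentially to package the data already assembled in the paragraphs preceding the theorem. The commutative diagram just above the statement establishes that $\det\colon\rho_E(G_L)\to\hat{H}'$ is a continuous isomorphism; this is the hard input, and everything else is formal.

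First, I would use this isomorphism to define a continuous section $s\colon\hat{H}'\to\hat\Gamma'$ of the determinant by composing $(\det|_{\rho_E(G_L)})^{-1}$ with the inclusion $\rho_E(G_L)\hookrightarrow\hat\Gamma'$. With $\hat{H}'$ acting on $SL_2(\hat\Z_{(p)})$ by conjugation through $s$, I would then consider the continuous homomorphism
\[
\Phi\colon SL_2(\hat\Z_{(p)})\rtimes\hat{H}'\longrightarrow\hat\Gamma',\qquad (A,h)\longmapsto A\cdot s(h).
\]
It is bijective because the short exact sequence defining $\hat\Gamma'$ splits through $s$, and since both source and target are compact Hausdorff groups a continuous bijective homomorphism between them is automatically a homeomorphism. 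This gives the first isomorphism. The second isomorphism is then obtained for free by transporting the $\hat{H}'$-action along the continuous isomorphism $\det\colon\rho_E(G_L)\xrightarrow{\cong}\hat{H}'$.

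For the non-openness assertion, I would exploit the explicit form of the decomposition just obtained. Under the identification $\hat\Gamma'\cong SL_2(\hat\Z_{(p)})\rtimes\rho_E(G_L)$, the subgroup $\rho_E(G_L)$ is exactly the image of $s$, so in particular
\[
\rho_E(G_L)\cap SL_2(\hat\Z_{(p)})=\{1\}.
\]
Now $\hat\Gamma'$ is profinite, so every open subgroup of $\hat\Gamma'$ has finite index. If $\rho_E(G_L)$ were open in $\hat\Gamma'$, then its intersection with the closed subgroup $SL_2(\hat\Z_{(p)})$ would be open and of finite index in $SL_2(\hat\Z_{(p)})$; but this intersection is trivial while $SL_2(\hat\Z_{(p)})$ is infinite, contradiction.

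There is really no obstacle to speak of here: all the substantive work — showing that $\rho_E(G_L)$ is procyclic and that $\det$ maps it isomorphically onto $\hat{H}'$ — has already been carried out in the discussion preceding the theorem. The only thing the proof itself has to do is formally convert the splitting of the exact sequence into a semidirect product presentation and then observe that $\rho_E(G_L)$ meets the $SL_2$-factor trivially, which rules out openness.
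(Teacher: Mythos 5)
Your proof is correct and essentially reproduces the paper's argument: the content is exactly the commutative diagram established just before the statement, which shows $\det\colon\rho_E(G_L)\to\hat{H}'$ is an isomorphism and hence a continuous section of the defining exact sequence; the semidirect-product description is the formal consequence, and your compact-Hausdorff observation properly justifies that the resulting continuous bijection is a homeomorphism. The only point where you deviate slightly is the non-openness claim: the paper's argument (spelled out just before Proposition~\ref{isotrivial-pro}) is that $\rho_E(G_L)$ is procyclic, hence abelian, whereas any open subgroup of $\hat\Gamma'$ must contain a nonabelian open subgroup of $SL_2(\hat\Z_{(p)})$; you instead observe that $\rho_E(G_L)\cap SL_2(\hat\Z_{(p)})=\{1\}$ while openness would force that intersection to have finite index in the infinite group $SL_2(\hat\Z_{(p)})$. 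Both arguments are valid, short, and of the same elementary profinite flavor, so this is only a cosmetic difference rather than a genuinely new route.
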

Actually, by working componentwise it can be shown (as above) that $\rho_{E,\ell}(G_F)\subsetneq\hat\Gamma_\ell$ for all primes $\ell\not=p$.
\begin{rem}
The goal of this appendix was to highlight the following ``principle'': as long as one is interested in the ``asymptotic size'' of the images of the Galois representations on Tate modules of elliptic curves, \emph{isotriviality} is the counterpart in characteristic $p$ of \emph{complex multiplication} over number fields. In fact, when the ring of endomorphisms is ``as small as possible'' (i.e., equal to $\Z$) the image of Galois is definitively ``as large as possible'' (i.e., equal to $GL_2(\Z_\ell)$ in characteristic zero and to $\hat\Gamma_\ell$ in positive characteristic), while this never happens (both in positive characteristic and in characteristic zero, cf. Remark \ref{serre-rem}) when the elliptic curve has an endomorphism ring of rank greater than one.
\end{rem}

\end{document}